\numberwithin{equation}{section}
\newcommand{\be}{\begin{eqnarray}}
\newcommand{\ee}{\end{eqnarray}}
\newcommand{\ce}{\begin{eqnarray*}}
\newcommand{\de}{\end{eqnarray*}}
\newtheorem{theorem}{Theorem}[section]
\newtheorem{lemma}[theorem]{Lemma}
\newtheorem{remark}[theorem]{Remark}
\newtheorem{definition}[theorem]{Definition}
\newtheorem{proposition}[theorem]{Proposition}
\newtheorem{Examples}[theorem]{Example}
\newtheorem{corollary}[theorem]{Corollary}
\newenvironment{proof of theorem 1.2 and 1.3}{{\it Proof of Theorem 1.2 and 1.3}.}{{\hfill 	
$\square$\hskip - \parfillskip}}
\newenvironment{proof of theorem 1.4}{{\it Proof of Theorem 1.4}.}{{\hfill 	
		$\square$\hskip - \parfillskip}}
\newenvironment{proof of theorem 1.5}{{\it Proof of Theorem 1.5}.}{{\hfill 	
		$\square$\hskip - \parfillskip}}
\newenvironment{proof of theorem 1.6}{{\it Proof of Theorem 1.6}.}{{\hfill 	
		$\square$\hskip - \parfillskip}}
\newcommand{\Rmnum}[1]{\expandafter\@slowromancap\romannumeral #1@}
\def\[{{\Big[}}
\def\]{{\Big]}}
\def\<{{\langle}}
\def\>{{\rangle}}
\def\({{\Big(}}
\def\){{\Big)}}
\def\bx{{\mathbf{x}}}
\def\={&\!\!=\!\!&}
\def\1{{\mathbf{1}}}
\def\geq{\geqslant}
\def\leq{\leqslant}
\def\k{\kappa}
\def\[{{\Big[}}
\def\]{{\Big]}}
\def\<{{\langle}}
\def\>{{\rangle}}
\def\({{\Big(}}
\def\){{\Big)}}
\def\bx{{\mathbf{x}}}
\def\W{{\mathcal W}}
\def\={&\!\!=\!\!&}
\def\bt{\begin{theorem}}
\def\et{\end{theorem}}
\def\bl{\begin{lemma}}
\def\el{\end{lemma}}
\def\br{\begin{remark}}
\def\er{\end{remark}}
\def\bx{\begin{Examples}}
\def\ex{\end{Examples}}
\def\bd{\begin{definition}}
\def\ed{\end{definition}}
\def\bp{\begin{proposition}}
\def\ep{\end{proposition}}
\def\bc{\begin{corollary}}
\def\ec{\end{corollary}}
\def\geq{\geqslant}
\def\leq{\leqslant}
 \def\R{\mathbb R}
 \def\R{\mathbb R}
\def\<{\langle} \def\>{\rangle}
\def\bpf{\begin{proof}}
\def\epf{\end{proof}}
\begin{document}
	
\title{Generalized  Minkowski Formulas and rigidity results for Anisotropic Capillary Hypersurfaces}\thanks{\it {The research is partially supported by NSFC (Nos. 11871053 and 12261105).}}
\author{Jinyu Gao and Guanghan Li}

\thanks{{\it 2020 Mathematics Subject Classification. 53A10, 53C24, 53C40.}}
\thanks{{\it Keywords: Hsiung-Minkowski formula, anisotropic capillary hypersurface,  Alexandrov type theorem,  Minkowski problem}}
\thanks{Email address: jinyugao@whu.edu.cn; ghli@whu.edu.cn}

\address{School of Mathematics and Statistics, Wuhan University, Wuhan 430072, China.
}

\begin{abstract}
In this paper, we obtain a new Hsiung-Minkowski integral formula for anisotropic capillary hypersurfaces in the half-space, which includes the weighted Hsiung-Minkowski formula and classical anisotropic Minkowski identity for closed hypersurfaces as special cases. As applications, we prove some anisotropic Alexandrov-type theorems and rigidity results for  anisotropic capillary hypersurfaces. Specially, the uniqueness of the solution to the anisotropic Orlicz-Christoffel-Minkowski problem is obtained, and thus a new proof is provided for the uniqueness of the solution to $L_p$-Minkowski problem with $p\geq 1$ in the Euclidean capillary convex bodies geometry.
\end{abstract}

\maketitle
\setcounter{tocdepth}{2}
\tableofcontents

\section{Introduction}
Capillary phenomena appears in the study of the equilibrium state of liquid fluids. The study of capillary geometry from the view of mathematics has attracted a lot of attention in the last few decades. For the detailed description of isotropic and anisotropic capillary hypersurfaces, we refer to \cite{Finn1986,Philippis2015,Jia-Wang-Xia-Zhang2023,Koiso2022}.

 The anisotropic geometry is a   great tool for dealing with convex geometry problems, such as Alexandrov-Fenchel inequalities (see \cite{Xia2017,Gao-Li2024,Wei-Xiong2021,Wei-Xiong2022}, and so on). In this paper, we  derive the generalization of anisotropic Hsiung-Minkowski integral formula  and inequalities (Theorem \ref{lemma:hminkow-w0} and Corollary \ref{cor:minkowski neq}), then deduce some anisotropic rigidity results (Theorem \ref{Thm-Alexan-a=b-w0}, \ref{thm1.9}$\thicksim$\ref{thm1.10}). By anisotropic  rigidity result (Theorem \ref{Thm-Alexan-a=b-w0}), we provide a new proof for the uniqueness of the solution to $L_p$-Minkowski problem in capillary convex bodies geometry (Corollary \ref{cor:isotropic-Minkowski}).

This article mainly considers the anisotropic capillary hypersurfaces in the half-space
\begin{align*}
	\mathbb{R}_+^{n+1}=\{x\in \mathbb{R}^{n+1} :\<x,E_{n+1}\>>0 \},
\end{align*}
where $E_{n+1}$ denotes the $(n+1)$th-coordinate unit vector.

 Let $\mathcal{W}\subset \mathbb{R}^{n+1}$ be a given smooth closed strictly convex hypersurface containing the origin with support function $F:\mathbb{S}^{n}\rightarrow\mathbb{R}_+$. The Cahn-Hoffman map associating with $F$ is given by
 \begin{align*}
 \Phi:\mathbb{S}^n\rightarrow\mathbb{R}^{n+1},\quad	\Phi(x)=F(x)x+\nabla^{\mathbb{S}} {F}(x),
 \end{align*}
where $\nabla^{\mathbb{S}}$ denotes the covariant derivative on $\mathbb{S}^n$ with standard round metric. Let $\Sigma$ be a $C^2$ compact orientable hypersurface in $\overline{\mathbb{R}_+^{n+1}}$ with boundary $\partial\Sigma\subset\partial\mathbb{R}_+^{n+1}$, which encloses a bounded domain $\Omega$. Let $\nu$ be the unit normal of $\Sigma$ pointing outward of $\Omega$. Given a constant $\omega_0 \in(-F(E_{n+1}), F(-E_{n+1}))$, we say $\Sigma$ is an anisotropic $\omega_0$-capillary hypersurface (see \cite{Jia-Wang-Xia-Zhang2023})  if
\begin{align}
	\label{equ:w0-capillary}
	\<\Phi(\nu),-E_{n+1}\>=\omega_0,\quad \text{on}\ \partial\Sigma.
\end{align}
If $\W=\mathbb{S}^{n}$, this is just the isotropic case: $\theta$-capillary hypersurface (see e.g., \cite{Wang-Weng-Xia,Xia-arxiv}), i.e. $\Sigma\in\overline{\mathbb{R}^{n+1}_+}$ with a given constant $\theta\in (0,\pi)$ satisfying
\begin{align*}
	\<\nu,E_{n+1}\>=\cos\theta,\quad\text{along}\ \partial\Sigma.
\end{align*}
 The model example of anisotropic $\omega_0$-capillary hypersurface is the $\omega_0$-capillary Wulff shape $\W_{r_0,\omega_0}$ defined by \eqref{equ:crwo}, which is a part of a Wulff shape in $\overline{\mathbb{R}^{n+1}_+}$ such that the anisotropic capillary boundary condition \eqref{equ:w0-capillary} holds \cite{Jia-Wang-Xia-Zhang2023}.

For the convenience of description, we need a constant vector $E_{n+1}^F \in \mathbb{R}^{n+1}$ defined as
 $$
 E_{n+1}^F= \begin{cases}\frac{\Phi\left(E_{n+1}\right)}{F\left(E_{n+1}\right)}, & \text { if } \omega_0<0, \\[5pt]
  -\frac{\Phi\left(-E_{n+1}\right)}{F\left(-E_{n+1}\right)}, & \text { if } \omega_0>0.\end{cases}
 $$
 Note that $E_{n+1}^F$ is the unique vector in the direction $\Phi\left(E_{n+1}\right)$, whose scalar product with $E_{n+1}$ is 1 \cite{Jia-Wang-Xia-Zhang2023}*{eq. (3.2)}. When $\omega_0=0$, one can define it by any unit vector. 

 We refer to \cite{Xia2009-to-appear} for a historical description of  the Hsiung-Minkowski formula.
 In \cite{KKK2016}, Kwong proved the generalization of the Hsiung-Minkowski formula  in a semi-Riemannian manifold with constant curvature.
 In \cite[Theorem 1.1]{He-Li06}, He and Li  generalized the Hsiung-Minkowski formula to anisotropic geometry.
 In \cite{Jia-Wang-Xia-Zhang2023}, Jia, Wang, Xia and Zhang derived an anisotropic Minkowski formula for embedded capillary hypersurfaces in the half space.
 We shall list some details as follows:
\begin{theorem}[Kwong, 2016,  \cite{KKK2016}*{Theorem 1.1}]\label{Thm1.1}
	Suppose $\left(N^{n+1}, g^N\right)$ has constant curvature and $(\Sigma,g)$ is a closed oriented hypersurface. Assume $X \in \Gamma\left(\phi^*(T N)\right)$ is a conformal vector field along $\Sigma$, and $f$ is a smooth function on $\Sigma$. Then for $0 \leq k \leq n-1$, we have
\begin{align}\label{equ:Thm1.1}
	\int_{\Sigma} \alpha f H_k \mathrm{~d}\mu_g=\int_{\Sigma} f H_{k+1} \langle X, \nu\rangle\mathrm{~d}\mu_g-\frac{1}{(n-k)\binom{n}{k}} \int_{\Sigma}\left\langle T_k(\nabla f), X^{\top}\right\rangle \mathrm{~d}\mu_g.
\end{align}
Here $H_k=\sigma_k(\kappa)/\binom{n}{k}$ is the normalized $k$-th mean curvature, Lie derivative of $g^N$ satisfies  $\mathcal{L}_X g^N=2\alpha g^N$ for some function $\alpha$, 
$\nu $ is a unit normal vector field,  ${X}^{\top}$ is the tangential component of $X$ onto $T \Sigma$, $\phi $ is an isometric immersion of $\Sigma$ into $N^{n+1}$, and $T_k$ is the $k$-th Newton transformation of $\mathrm{d}\nu$.
\end{theorem}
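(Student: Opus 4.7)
The plan is to apply the divergence theorem on the closed hypersurface $\Sigma$ to the tangential vector field $fT_k(X^\top)$. Expanding the divergence using the self-adjointness of the Newton transformation $T_k$, one has
\begin{equation*}
\mathrm{div}\bigl(fT_k(X^\top)\bigr) = \langle T_k(\nabla f), X^\top\rangle + f\,\mathrm{div}\,T_k(X^\top),
\end{equation*}
so the whole task reduces to identifying $\mathrm{div}\,T_k(X^\top)$ with $(n-k)\binom{n}{k}(\alpha H_k - \langle X,\nu\rangle H_{k+1})$; integrating and rearranging then yields \eqref{equ:Thm1.1}.

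To compute $\mathrm{div}\,T_k(X^\top)$, I would first invoke the classical fact that in an ambient space of constant curvature each Newton tensor of the shape operator is divergence-free, $\mathrm{div}\,T_k=0$. This is the one place where the constant-curvature hypothesis is used essentially, via the Codazzi equation $(\nabla_Y A)(Z) = (\nabla_Z A)(Y)$ together with an induction on $k$. Granted this, one gets $\mathrm{div}\,T_k(X^\top) = \mathrm{tr}(T_k \nabla X^\top)$, where $\nabla$ denotes the Levi--Civita connection of $(\Sigma,g)$. Next I would decompose $X = X^\top + \langle X,\nu\rangle \nu$ and apply the Gauss formula to write, for tangent $Y$, $\nabla_Y X^\top = (\nabla^N_Y X)^\top - \langle X,\nu\rangle A(Y)$, where $A$ is the shape operator of $\Sigma$. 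The conformal Killing condition $\mathcal{L}_X g^N = 2\alpha g^N$ provides $\langle \nabla^N_Y X, Y\rangle = \alpha |Y|^2$, and since $T_k$ is self-adjoint a diagonalisation of $A$ gives
\begin{equation*}
\mathrm{tr}(T_k \nabla X^\top) = \alpha\,\mathrm{tr}(T_k) - \langle X,\nu\rangle\,\mathrm{tr}(T_k A).
\end{equation*}

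The last step is purely algebraic: substitute the standard Newton identities $\mathrm{tr}(T_k) = (n-k)\sigma_k = (n-k)\binom{n}{k}H_k$ and $\mathrm{tr}(T_k A) = (k+1)\sigma_{k+1} = (n-k)\binom{n}{k}H_{k+1}$, and integrate over the closed $\Sigma$ so that $\int_\Sigma \mathrm{div}(fT_k(X^\top))\,d\mu_g = 0$. Rearranging gives the claim. The principal conceptual obstacle is the divergence-free property of $T_k$, where the constant-curvature assumption is indispensable; in general ambient geometries a curvature correction term would appear and spoil the clean form \eqref{equ:Thm1.1}. The rest is bookkeeping with the conformal identity and symmetric function identities for $\sigma_k$.
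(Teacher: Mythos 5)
Your proof is correct and follows essentially the same scheme the paper itself uses: apply the divergence theorem to $fT_k(X^{\top})$, invoke the divergence-free property of the Newton tensors in constant curvature (via Codazzi), and finish with the trace identities $\operatorname{tr}(T_k)=(n-k)\sigma_k$ and $\operatorname{tr}(T_kA)=(k+1)\sigma_{k+1}=(n-k)\binom{n}{k}H_{k+1}$ together with the conformal Killing relation. The paper quotes Theorem \ref{Thm1.1} from Kwong without reproving it, but its own proof of the anisotropic capillary generalization (Theorem \ref{lemma:hminkow-w0}, via Lemmas \ref{lemma3.2} and \ref{lemma3.3} and the He--Li identities for $P_k$) is exactly this argument with $P_k$ playing the role of $T_k$, so your route is the intended one.
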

\begin{theorem}[Jia-Wang-Xia-Zhang, 2023, \cite{Jia-Wang-Xia-Zhang2023}*{Theorem 1.3}] \label{Thm1.2}
	Assume  $(\Sigma,g) \subset \overline{\mathbb{R}_{+}^{n+1}}$ is a $C^2$ compact anisotropic $\omega_0$-capillary hypersurface, where $\omega_0 \in\left(-F\left(E_{n+1}\right)\right.,$ $\left.F\left(-E_{n+1}\right)\right)$. Let $H_{k+1}^F$ be the normalized anisotropic $(k+1)$-th mean curvature for some $0 \leq k \leq n-1$ and $H_0^F \equiv 1$ by convention. Then it holds
\begin{align}\label{equ:Thm1.2}
	\int_{\Sigma} H_{k}^F\left(F(\nu)+\omega_0\left\langle \nu, E_{n+1}^F\right\rangle\right)-H_{k+1}^F\langle X, \nu\rangle \mathrm{~d}\mu_g=0.
\end{align}

\end{theorem}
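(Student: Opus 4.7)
The plan is to derive \eqref{equ:Thm1.2} as the capillary analogue of the closed anisotropic Minkowski formula of He--Li \cite{He-Li06}, adapting the standard divergence-theorem strategy behind Kwong's identity \eqref{equ:Thm1.1}. Concretely, I would compute the tangential divergence of a vector field built from the position vector and the anisotropic Newton tensor $T_k^F$, integrate over $\Sigma$, and then use the capillary condition \eqref{equ:w0-capillary} to trade the boundary flux produced by Stokes' theorem for the extra bulk term $\omega_0\int_\Sigma H_k^F\langle\nu,E_{n+1}^F\rangle\, d\mu_g$ appearing in \eqref{equ:Thm1.2}.

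For the first step, recall that the anisotropic Newton tensor $T_k^F$, built from the anisotropic Weingarten map $S^F=d\Phi_\nu\circ d\nu$, is self-adjoint on $T\Sigma$, is divergence-free in the flat ambient space (Codazzi), and satisfies $\mathrm{tr}(T_k^F)=(n-k)\binom{n}{k}H_k^F F(\nu)$ and $\mathrm{tr}(T_k^F\circ S^F)=(n-k)\binom{n}{k}H_{k+1}^F$ under the normalization $H_k^F=\sigma_k^F/\binom{n}{k}$. Using $\bar\nabla X=\mathrm{Id}$ and the Gauss formula, a direct computation yields
\begin{equation*}
 \mathrm{div}_\Sigma\bigl(T_k^F(X^T)\bigr)=(n-k)\binom{n}{k}\bigl(H_k^F F(\nu)-H_{k+1}^F\langle X,\nu\rangle\bigr),
\end{equation*}
where $X^T$ is the tangential part of $X$. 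Integrating over $\Sigma$ and applying Stokes' theorem on the manifold-with-boundary $(\Sigma,\partial\Sigma)$ produces the boundary flux $\int_{\partial\Sigma}\langle T_k^F(X^T),\bar\mu\rangle\, ds$, with $\bar\mu$ the outward conormal of $\partial\Sigma$ in $\Sigma$.

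The decisive step is to rewrite this boundary flux as a bulk integral of $\omega_0 H_k^F\langle\nu,E_{n+1}^F\rangle$. Since $\partial\Sigma\subset\partial\mathbb{R}_+^{n+1}$, the ambient vector $E_{n+1}$ is perpendicular to $T\partial\Sigma$, so along $\partial\Sigma$ one has the orthogonal decomposition $E_{n+1}=\langle E_{n+1},\nu\rangle\nu+\langle E_{n+1},\bar\mu\rangle\bar\mu$. The capillary condition \eqref{equ:w0-capillary}, together with the defining relations $\langle E_{n+1}^F,E_{n+1}\rangle=1$ and $E_{n+1}^F\parallel\Phi(\pm E_{n+1})$, then pins $\bar\mu$ down along $\partial\Sigma$ as an explicit linear combination of $\nu$ and $E_{n+1}^F$ whose coefficients are controlled by $\omega_0$ and $\langle\nu,E_{n+1}^F\rangle$. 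Substituting this expression into the boundary flux and invoking a companion divergence identity on $\Sigma$ for the tangent field $T_k^F\bigl((E_{n+1}^F)^T\bigr)$---whose bulk divergence, thanks to $\bar\nabla E_{n+1}^F=0$, involves only $\langle\nu,E_{n+1}^F\rangle$ and $H_{k+1}^F\langle E_{n+1}^F,\nu\rangle$ terms---converts the entire boundary contribution into exactly $-(n-k)\binom{n}{k}\omega_0\int_\Sigma H_k^F\langle\nu,E_{n+1}^F\rangle\, d\mu_g$, the $H_{k+1}^F$ pieces cancelling. Dividing out by $(n-k)\binom{n}{k}$ and rearranging yields \eqref{equ:Thm1.2}.

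I expect the main obstacle to be this last conversion: reading off the correct coefficient of $\bar\mu$ in the anisotropic decomposition on $\partial\Sigma$ in a coordinate-free way, and pairing the two Stokes applications (for $T_k^F(X^T)$ and for $T_k^F((E_{n+1}^F)^T)$) so that their boundary fluxes combine cleanly into the desired bulk identity. A subsidiary technical point is that the Cahn--Hoffman factor $F(\nu)$ enters the trace identities of $T_k^F$ multiplicatively, so all normalizations must be checked against the $H_k^F$ convention of \cite{Jia-Wang-Xia-Zhang2023}.
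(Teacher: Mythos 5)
Your overall strategy is the one the paper (and the original Jia--Wang--Xia--Zhang argument, reproduced here as the $f\equiv 1$ case of Theorem \ref{lemma:hminkow-w0}) actually uses: integrate the divergence of Newton-tensor-contracted tangent fields built from $X$ and $E_{n+1}^F$, and kill or convert the boundary flux using the capillary condition. The only organizational difference is that the paper packages the three fields into a single vector $\xi$ in \eqref{equ:xi} and shows its flux through $\partial\Sigma$ vanishes identically (Lemmas \ref{Lem:hiu=0} and \ref{lemma3.2}, the latter resting on the pointwise identity $\langle\xi,\mu\rangle=0$ on $\partial\Sigma$), whereas you keep the fields separate and match boundary terms; these are the same computation.

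There is, however, one concrete error in your first step. In the anisotropic setting the identity
\begin{equation*}
\operatorname{div}_\Sigma\bigl(P_k(X^{\top})\bigr)=(n-k)\binom{n}{k}\bigl(H_k^F F(\nu)-H_{k+1}^F\langle X,\nu\rangle\bigr)
\end{equation*}
is false when $X^{\top}$ is the ordinary tangential projection: the correct computation gives $\operatorname{div}(P_k(X^{\top}))=(n-k)\sigma_k-\langle X,\nu\rangle\operatorname{tr}(P_k\circ \mathrm{d}\nu)$, and $\operatorname{tr}(P_k\circ \mathrm{d}\nu)$ is \emph{not} $(k+1)\sigma_{k+1}$; only $\operatorname{tr}(P_k\circ S_F)$ with $S_F=A_F\circ \mathrm{d}\nu$ equals that. (Relatedly, $S_F$ is not self-adjoint with respect to the induced metric $g$; what is symmetric is $\mathrm{d}\nu\circ P_k$, and your stated trace $\operatorname{tr}(P_k)=(n-k)\binom{n}{k}H_k^F F(\nu)$ has a spurious factor of $F(\nu)$.) To get the divergence you want, you must use the anisotropic tangential part $\hat{X^{\top}}=X-\frac{\langle X,\nu\rangle}{F(\nu)}\nu_F$ weighted by $F(\nu)$, i.e.\ combine \eqref{equ:lemma2.1.1} and \eqref{equ:lemma2.1.2} as in \eqref{equ:pfthm1.3-1}, which yields $\operatorname{div}(F(\nu)P_k(\hat{X^{\top}}))=(n-k)\binom{n}{k}(H_k^FF(\nu)-H_{k+1}^F\langle X,\nu\rangle)$. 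With that correction, together with the companion identity \eqref{equ:pf(2)} for the $E_{n+1}^F$-field (where the $H_{k+1}^F$ cross-terms indeed cancel, as you anticipate) and the boundary identity $\langle\xi,\mu\rangle=0$ plus $\langle P_k(e_\alpha),\mu\rangle=0$ from Lemma \ref{Lem:hiu=0}, your plan closes exactly as in the paper.
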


In this paper, we first extend Theorem \ref{Thm1.1} and \ref{Thm1.2}, and establish a generalization of the anisotropic  Hsiung-Minkowski  formula as follows:

\begin{theorem}\label{lemma:hminkow-w0}
	Let $\omega_0 \in\left(-F\left(E_{n+1}\right), F\left(-E_{n+1}\right)\right)$ and  $(\Sigma,g,\nabla) \subset \overline{\mathbb{R}_{+}^{n+1}}$ be a $C^2$ compact anisotropic $\omega_0$-capillary hypersurface. Then for any smooth function $f$ on $\Sigma$, there holds
	\begin{align}\label{equ:Thm1.3}
	&	\int_{\Sigma} f H_{k}^F\left(F(\nu)+\omega_0\left\langle \nu, E_{n+1}^F\right\rangle\right)-fH_{k+1}^F\langle X, \nu\rangle \mathrm{~d}\mu_g\nonumber
	\\
	=&-\frac{1}{(n-k)\binom{n}{k}}\int_{\Sigma}\< \nabla f, P_k(F(\nu)\hat{X^{\top}}+\omega_0\<E_{n+1}^F,\nu\>X^{\top }-\omega_0\<X,\nu\>(E_{n+1}^F)^{\top})\> \mathrm{~d}\mu_g,
	\end{align}
	where $k=0,\cdots,n-1,$ $P_k$ is defined in \eqref{equ:P_k}, $H^F_k$ is the anisotropic $k$-th mean curvature,  $F(\nu)\hat{X^{\top}}=F(\nu)X-\<X,\nu\> \nu_F$, $\nu_F$ is the anisotropic normal (see Section \ref{sec 2}), $X^{\top}=X-\<X,\nu\>\nu$ and  $(E^{F}_{n+1})^{\top}=E^{F}_{n+1}-\<E^{F}_{n+1},\nu \>\nu$.
\end{theorem}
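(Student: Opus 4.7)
The strategy is to promote the divergence-theorem proof underlying Theorem~\ref{Thm1.2} (the $f\equiv 1$ case) to a weighted version, using the product rule to generate the gradient term on the right-hand side. Concretely, introduce the tangential vector field
\begin{equation*}
V_k \;=\; P_k\!\left(F(\nu)\widehat{X^{\top}}+\omega_0\langle E_{n+1}^F,\nu\rangle X^{\top}-\omega_0\langle X,\nu\rangle (E_{n+1}^F)^{\top}\right),
\end{equation*}
which is precisely the argument of the gradient on the right-hand side of \eqref{equ:Thm1.3}. The first step is to verify the interior divergence identity
\begin{equation*}
\mathrm{div}_\Sigma V_k \;=\; (n-k)\tbinom{n}{k}\!\left[H_k^F\bigl(F(\nu)+\omega_0\langle \nu,E_{n+1}^F\rangle\bigr)-H_{k+1}^F\langle X,\nu\rangle\right].
\end{equation*}
This is the anisotropic Newton-tensor identity already implicit in the proof of Theorem~\ref{Thm1.2}; the three terms are engineered so that the derivatives of $F(\nu)$ and the derivatives of $P_k$ combine via the anisotropic Codazzi relation $\mathrm{div}_\Sigma P_k=0$ into the normalized mean curvatures $H_k^F$ and $H_{k+1}^F$.

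The second step is a straightforward product-rule expansion
\begin{equation*}
\mathrm{div}_\Sigma(f V_k)\;=\; f\,\mathrm{div}_\Sigma V_k+\langle \nabla f, V_k\rangle,
\end{equation*}
followed by integration over $\Sigma$ and Stokes' theorem, which produces a bulk contribution equal to the left-hand side of \eqref{equ:Thm1.3} (up to the factor $(n-k)\binom{n}{k}$) plus a boundary integral $\int_{\partial\Sigma} f\,\langle V_k,\mu\rangle\,ds$, where $\mu$ denotes the outward conormal of $\partial\Sigma$ in $\Sigma$.

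The crucial step, and the one I expect to be the main obstacle, is showing that the boundary integrand $\langle V_k,\mu\rangle$ vanishes identically along $\partial\Sigma$ once the anisotropic $\omega_0$-capillary condition $\langle \Phi(\nu),-E_{n+1}\rangle=\omega_0$ holds. The mechanism is the following: on $\partial\Sigma\subset \partial\mathbb{R}^{n+1}_+$, the capillary condition forces a linear relation between $\nu_F=\Phi(\nu)$, $\nu$, $\mu$, $E_{n+1}$ and $E_{n+1}^F$; in particular, one expects $F(\nu)\mu + \omega_0 \langle E_{n+1}^F,\nu\rangle \mu$ to lie in the $\mu$-complement in a way that exactly cancels against the $\omega_0\langle X,\nu\rangle(E_{n+1}^F)^\top$ term (note that $\langle X,\mu\rangle$ involves the position of a point on $\partial\mathbb{R}_+^{n+1}$ contracted with $\mu$). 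This is the boundary relation that was exploited in \cite{Jia-Wang-Xia-Zhang2023} to establish \eqref{equ:Thm1.2}; I would invoke it verbatim (or re-derive it using the decomposition of $E_{n+1}^F$ with respect to $\nu$ and $\mu$), noting that it holds eigenvector-wise for $P_k$ along $\partial\Sigma$ because $\mu$ is a principal direction there.

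Once the boundary term is eliminated, rearranging gives \eqref{equ:Thm1.3}. Setting $f\equiv 1$ recovers Theorem~\ref{Thm1.2}, while dropping the capillary terms (i.e.\ taking $\omega_0=0$ and $\Sigma$ closed) recovers the anisotropic analogue of \eqref{equ:Thm1.1}, which provides a useful sanity check on the formula.
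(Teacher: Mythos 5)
Your proposal is correct and follows essentially the same route as the paper: the paper defines $\xi$ to be exactly your $V_k$ before applying $P_k$, proves the interior divergence identity $\operatorname{div}(P_k(\xi))=(n-k)\binom{n}{k}\bigl(H_k^F(F(\nu)+\omega_0\langle\nu,E_{n+1}^F\rangle)-H_{k+1}^F\langle X,\nu\rangle\bigr)$ from the He--Li identities, and kills the boundary term by showing $\langle S_F(e_\alpha),\mu\rangle=0$ (so $\langle P_k(\xi),\mu\rangle=\langle\xi,\mu\rangle\langle P_k(\mu),\mu\rangle$) together with $\langle\xi,\mu\rangle=0$ from the boundary relation of Jia--Wang--Xia--Zhang, exactly as you anticipate. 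The only cosmetic caveat is that "$\mu$ is a principal direction" should more precisely be stated as "$T(\partial\Sigma)$ is $S_F$-invariant along $\partial\Sigma$," which is what the capillary condition actually yields.
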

\begin{remark}
	(1) If $\partial\Sigma=\emptyset$, i.e., $\Sigma$ is a hypersurface without boundary, then by divergence theorem, we have
	$\int_{\Sigma}\operatorname{div~}(\omega_0fP_k(\<E_{n+1}^F,\nu\>X^{\top}-\<X,\nu\>(E_{n+1}^F)^{\top }))\mathrm{~d}\mu_g=0$, which implies that the following  $$\int_{\Sigma}
	\omega_0 f(n-k)\binom{n}{k}H_k^F\<\nu,E_{n+1}^F\> + \< \nabla f, P_k(\omega_0\<E_{n+1}^F,\nu\>X^{\top }-\omega_0\<X,\nu\>(E_{n+1}^F)^{\top})\>
	\mathrm{~d}\mu_g$$
	vanishes identically by \eqref{equ:pf(2)}.
	Therefore, Theorem \ref{lemma:hminkow-w0} is then the following Hsiung-Minkowski  formula for closed anisotropic hypersurface \cite{Gao-Li2024}
$$
	\int_{\Sigma} f H^F_k F(\nu)\mathrm{~d}\mu_g=\int_{\Sigma} f H^F_{k+1} \left\langle X, \nu\right\rangle\mathrm{~d}\mu_g-\frac{1}{(n-k)\binom{n}{k}} \int_{\Sigma}\left\langle \nabla f, P_k(F(\nu)\hat{X^{\top}})\right\rangle \mathrm{~d}\mu_g .
$$

  (2) Let $\partial\Sigma=\emptyset$. If we take $\W=\mathbb{S}^{n}$, then the above Hsiung-Minkowski  formula matches the formula \eqref{equ:Thm1.1} in Theorem \ref{Thm1.1} with $N^{n+1}=\mathbb{R}^{n+1}$.
	If $f$  is a non-zero constant, then the above Hsiung-Minkowski  formula is just the classical anisotropic Minkowski formula in \cite[Theorem 1.1]{He-Li06}.

  (3) If $\partial\Sigma\neq\emptyset$ and $f$  is a non-zero constant, then \eqref{equ:Thm1.3} in Theorem \ref{lemma:hminkow-w0} matches the formula \eqref{equ:Thm1.2} in Theorem \ref{Thm1.2}.
\end{remark}

 For anisotropic $\omega_0$-capillary hypersurface $\Sigma$, we call the function defined by \begin{align}\label{equ:u}
 	\bar{u}(X)=\frac{\<X,\nu(X)\>}{F(\nu(X))+\omega_0\<\nu(X),E_{n+1}^F\>},  \quad X\in\Sigma,
 \end{align}
 the anisotropic capillary support function of anisotropic capillary hypersurface $\Sigma$ in $\overline{\mathbb{R}_{+}^{n+1}}.$
 Lemma \ref{lemma:u=c} tells us that $\bar{u}\equiv const (\neq 0)$ if and only if $\Sigma$ is an $\omega_0$-capillary Wulff shape.
Assume $\Sigma$ is strictly convex, and $f(X)=f(\bar{u}(X))$,
Theorem \ref{lemma:hminkow-w0} yields the following inequalities, which play a crucial role in proving the  rigidity results.
\begin{corollary}\label{cor:minkowski neq}
		Let $\omega_0 \in\left(-F\left(E_{n+1}\right), F\left(-E_{n+1}\right)\right)$ and  $(\Sigma,g,\nabla) \subset \overline{\mathbb{R}_{+}^{n+1}}$ be a $C^2$, compact, strictly convex, and anisotropic $\omega_0$-capillary hypersurface. Then for $k=0,\cdots,n-1$, and $f$ a smooth function on $\mathbb{R}$, we have
	\begin{align}
		&	\int_{\Sigma} f(\bar{u}) H_{k}^F\left(F(\nu)+\omega_0\left\langle \nu, E_{n+1}^F\right\rangle\right)
		\mathrm{~d}\mu_g
		\leq
		\int_{\Sigma} f(\bar{ u})H_{k+1}^F\langle X, \nu\rangle \mathrm{~d}\mu_g , \text{\ when\ } f'(\bar{u})\geq 0,\label{equ:H-minkowski-w0}
		\\
		&	\int_{\Sigma} f(\bar{u}) H_{k}^F\left(F(\nu)+\omega_0\left\langle \nu, E_{n+1}^F\right\rangle\right)
		\mathrm{~d}\mu_g
		\geq
		\int_{\Sigma} f(\bar{ u})H_{k+1}^F\langle X, \nu\rangle \mathrm{~d}\mu_g , \text{\ when\ } f'(\bar{u})\leq 0.\label{equ:H-minkowski2-w0}
	\end{align}
	Equalities hold if and only if $f(\bar{u})\equiv const$ or $\Sigma$ is an $\omega_0$-capillary Wulff shape.
\end{corollary}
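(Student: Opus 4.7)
I would apply Theorem \ref{lemma:hminkow-w0} with $f$ replaced by the composition $f\circ\bar{u}$, using $\nabla(f\circ\bar{u})=f'(\bar{u})\nabla\bar{u}$, so that the right-hand side of \eqref{equ:Thm1.3} becomes
$$-\frac{1}{(n-k)\binom{n}{k}}\int_{\Sigma} f'(\bar{u})\,\<\nabla\bar{u},P_{k}(V)\>\,\mathrm{~d}\mu_{g},$$
where $V:=F(\nu)\hat{X^{\top}}+\omega_{0}\<E_{n+1}^{F},\nu\>X^{\top}-\omega_{0}\<X,\nu\>(E_{n+1}^{F})^{\top}$. The entire corollary is then reduced to the pointwise assertion $\<\nabla\bar{u},P_{k}(V)\>\ge 0$, with equality only where $\nabla\bar{u}=0$.

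To prove this, set $\phi:=\<X,\nu\>$, $\psi:=F(\nu)+\omega_{0}\<\nu,E_{n+1}^{F}\>$ and $A:=\nu_{F}+\omega_{0}E_{n+1}^{F}$, so that $\bar{u}=\phi/\psi$ and $\<A,\nu\>=\psi$. Substituting $X^{\top}=X-\phi\nu$, $(E_{n+1}^{F})^{\top}=E_{n+1}^{F}-\<E_{n+1}^{F},\nu\>\nu$, and $F(\nu)\hat{X^{\top}}=F(\nu)X-\phi\,\nu_{F}$, a direct algebraic simplification collapses the three terms defining $V$ to
$$V=\psi X-\phi A=\psi(X^{\top}-\bar{u}A^{\top}).$$
On the other hand, $1$-homogeneity of $F$ gives $\nu_{F}=\nabla F(\nu)$, and together with $\nabla_{i}\nu=h_{ij}e_{j}$ this yields $\nabla\phi=h(X^{\top})$ and $\nabla\psi=h(A^{\top})$; the quotient rule then produces
$$\nabla\bar{u}=\frac{1}{\psi}\,h(X^{\top}-\bar{u}A^{\top})=\frac{1}{\psi^{2}}\,h(V),\qquad\text{i.e.}\quad V=\psi^{2}h^{-1}(\nabla\bar{u}).$$
Since $P_{k}$ is a polynomial in the Weingarten operator $h$, it commutes with $h^{-1}$; on a strictly convex $\Sigma$ both $P_{k}$ and $h^{-1}$ are symmetric positive definite, and hence so is $P_{k}\circ h^{-1}$. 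Consequently
$$\<\nabla\bar{u},P_{k}(V)\>=\psi^{2}\<\nabla\bar{u},(P_{k}\circ h^{-1})(\nabla\bar{u})\>\ge 0,$$
with equality iff $\nabla\bar{u}=0$.

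Plugging this sign information back into \eqref{equ:Thm1.3} shows that the right-hand side there has the sign opposite to $f'(\bar{u})$, which gives \eqref{equ:H-minkowski-w0} when $f'(\bar{u})\ge 0$ and \eqref{equ:H-minkowski2-w0} when $f'(\bar{u})\le 0$. For the equality analysis, vanishing of the integral forces $f'(\bar{u})\<\nabla\bar{u},P_{k}(V)\>\equiv 0$ pointwise on $\Sigma$: either $\nabla\bar{u}\equiv 0$, in which case $\bar{u}$ is constant and Lemma \ref{lemma:u=c} identifies $\Sigma$ with an $\omega_{0}$-capillary Wulff shape; or otherwise $f'$ vanishes on the (dense, full-measure by Sard) set of regular values of $\bar{u}$ inside the interval $\bar{u}(\Sigma)$, and by continuity $f'\equiv 0$ on $\bar{u}(\Sigma)$, giving $f(\bar{u})\equiv\mathrm{const}$ on $\Sigma$. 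The decisive and least routine step, where I expect the main obstacle to lie, is the algebraic identification $V=\psi^{2}h^{-1}(\nabla\bar{u})$: recognizing that the intricate tangent field on the right-hand side of \eqref{equ:Thm1.3} is, up to a positive scalar, precisely the inverse Weingarten image of $\nabla\bar{u}$ is what turns the generalized Hsiung--Minkowski identity into a coercive quadratic form in $\nabla\bar{u}$ and thereby produces the desired one-sided inequalities.
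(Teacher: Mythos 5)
Your proposal follows essentially the same route as the paper: apply Theorem \ref{lemma:hminkow-w0} to $f\circ\bar u$, identify the tangent field $\xi$ (your $V$) with $\psi^{2}(\mathrm{d}\nu)^{-1}(\nabla\bar u)$ --- your identity $V=\psi X-\phi A=\psi^{2}h^{-1}(\nabla\bar u)$ is exactly the paper's formula \eqref{equ:du=dv(Xi)} --- and conclude by the positivity of a quadratic form in $\nabla\bar u$. The reduction, the algebraic collapse of $\xi$, and the equality discussion are all correct.

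There is, however, one step whose justification is wrong as stated. You assert that $P_{k}$ is a polynomial in the Weingarten operator $h=\mathrm{d}\nu$, hence symmetric, positive definite, and commuting with $h^{-1}$. In the anisotropic setting this is false: by \eqref{equ:P_k}, $P_{k}$ is a polynomial in $S_{F}=A_{F}\circ\mathrm{d}\nu$, which in general neither commutes with $\mathrm{d}\nu$ nor is symmetric with respect to the induced metric $g$ (it is only diagonalizable with real eigenvalues). Your reasoning would be valid in the isotropic case $F\equiv 1$, where $S_F=\mathrm{d}\nu$, but not here. The conclusion nevertheless survives, by the route the paper takes: the composition $\mathrm{d}\nu\circ P_{k}$ \emph{is} symmetric (as recorded after \eqref{equ:P_k}) and positive definite on a strictly convex hypersurface, so
\begin{align*}
\<\nabla\bar u,P_{k}(\xi)\>=\psi^{-2}\<\mathrm{d}\nu(\xi),P_{k}(\xi)\>=\psi^{-2}\<\xi,(\mathrm{d}\nu\circ P_{k})(\xi)\>\ge 0,
\end{align*}
with equality iff $\xi=0$ iff $\nabla\bar u=0$. (Equivalently, from $P_{k}^{T}\circ h=h\circ P_{k}$ one gets $(P_{k}\circ h^{-1})^{T}=P_{k}\circ h^{-1}$ and congruence of $P_{k}\circ h^{-1}$ to $h\circ P_{k}$, so your operator is indeed symmetric positive definite --- but for a reason other than commutativity.) A minor further remark: in the equality case your Sard-type dichotomy is unnecessary and not quite airtight as a global alternative; the pointwise vanishing of $f'(\bar u)\,|\nabla\bar u|^{2}_{\mathrm{d}\nu\circ P_{k}\circ(\mathrm{d}\nu)^{-2}}$ already forces $\nabla(f\circ\bar u)\equiv 0$, hence $f(\bar u)\equiv\mathrm{const}$, and Lemma \ref{lemma:u=c} handles the case $\bar u\equiv\mathrm{const}$.
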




In 1956, Alexandrov \cite{Alexandrov56} proved the classical Alexandrov  theorem that the only compact embedded hypersurface in $\mathbb{R}^{n+1}$ with constant mean curvature is a sphere. In 2010, Koiso and Palmer \cite{Koiso2010} generalized this result and proved that, a smooth immersion of a closed genus zero surface with constant anisotropic mean curvature should be a rescaled Wulff shape.
In 2013, Ma and Xiong \cite{Ma-Xiong2013} gave another proof by applying the evolution  method, to prove that  a closed oriented hypersurface embedded in the Euclidean space $\mathbb{R}^{n+1}$ with constant anisotropic $k$-mean curvature $H^F_k$ ($1\leq k\leq n$) should be a rescaled Wulff shape.
In 2000, Koh \cite{sung2000} derived that a closed oriented hypersurface in Euclidean space $\mathbb{R}^{n+1}$ (or hyperbolic space $\mathbb{H}^{n+1}$ or half sphere $\mathbb{S}_+^{n+1}$) with constant ratio $\frac{H_k}{H_l}$ ($0\leq l< k\leq n$) should be a geodesic hypersphere. In 2006, He and Li \cite{He-Li06} derived the anisotropic Minkowski identity to  prove that a compact convex  hypersurface with constant ratio $\frac{H^F_k}{H^F_l}$ ($0\leq l< k\leq n$) should be a rescaled Wulff shape. In \cite[Corollary 4.10(2)]{KKK2016}, Kwong considered the case of closed oriented hypersurface with constant  ratio $\frac{f(u) H_k}{H_l}$, where $0\leq l< k\leq n$ and $u$ is the support function. In \cite{Mei-Wang-Weng}*{Proposition 3.12}, Mei, Wang and Weng proved that if a capillary hypersurface satisfies $\bar{ u}H_1=1$, where $\bar{ u}$ is the capillary support function, then it must be a spherical cap.




More generally, there are various other rigidity theorems under other conditions, e.g., cases involving the linear combination of distinct higher-order mean curvatures \cite{Stong1960}. Wu and Xia \cite{Wu-Xia2014} obtained  the rigidity problem for hypersurfaces with constant linear combinations
of higher-order mean curvatures in the warped product manifolds. In \cite{Onat2010}, Onat considered the anisotropic case for convex hypersurface. In \cite{Li-Peng}, the second author and Peng weakened the convexity condition, and considered the linear combination of distinct higher-order anisotropic mean curvatures.

Motivated by the previous work, we shall use the weighted anisotropic integral formula of Hsiung-Minkowski
type to deduce the following anisotropic Alexandrov-type results.
\begin{theorem} \label{Thm-Alexan-a=b-w0}
	(1) Let $\omega_0 \in\left(-F\left(E_{n+1}\right), F\left(-E_{n+1}\right)\right)$, $\Sigma \subset \overline{\mathbb{R}_{+}^{n+1}}$ be a $C^2$, compact, strictly convex, and anisotropic $\omega_0$-capillary hypersurface, and $2\leq l\leq r\leq n$. If there are nonnegative and not all vanishing functions $\{a_j(t)\}_{j=l}^{r}$ with $a'_j(t)\geq 0$, nonnegative and not all vanishing functions $\{b_i(t)\}_{i=1}^{l-1}$ with  $b'_i(t)\leq 0$, such that
	\begin{align}
		\sum_{j=l}^{r}a_j(\bar{u})H_j^F=\sum_{i=1}^{l-1}b_i(\bar{u})H_i^F,  \label{equ:thm-aH=b1H-w0}
	\end{align}
	then $\Sigma$ is an  $\omega_0$-capillary Wulff shape.
	
	(2) Let $\omega_0 \in\left(-F\left(E_{n+1}\right), F\left(-E_{n+1}\right)\right)$, $\Sigma \subset \overline{\mathbb{R}_{+}^{n+1}}$ be a $C^2$, compact, embedded, strictly convex, and anisotropic $\omega_0$-capillary hypersurface,
    and $1\leq l\leq r\leq n$. If there are nonnegative and not all vanishing functions $\{a_j(t)\}_{j=l}^{r}$ with $a'_j(t)\geq 0$, nonnegative and not all vanishing functions  $\{b_i(t)\}_{i=0}^{l-1}$, with  $b'_i(t)\leq 0~(1\leq i\leq l-1)$ and $b'_0(t)=0$, such that
	\begin{align}
		\sum_{j=l}^{r}a_j(\bar{u})H_j^F=\sum_{i=0}^{l-1}b_i(\bar{u})H_i^F, \label{equ:thm-aH=b0H-w0}
	\end{align}
	then $\Sigma$ is an  $\omega_0$-capillary Wulff shape.
\end{theorem}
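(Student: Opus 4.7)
The plan is to sandwich the integral $\sum_{j=l}^{r}\int_{\Sigma}a_j(\bar{u})H_{j-1}^F\,w\,d\mu_g$ (writing $w := F(\nu)+\omega_0\langle\nu,E_{n+1}^F\rangle$, so that $\langle X,\nu\rangle=\bar{u}\,w$ by \eqref{equ:u}) between two opposing inequalities: one from the weighted Hsiung-Minkowski inequality of Corollary \ref{cor:minkowski neq}, the other from the pointwise Newton-Maclaurin inequalities for the anisotropic higher mean curvatures, which hold because strict convexity of $\Sigma$ makes all anisotropic principal curvatures positive. The collapse of both chains to equalities, together with the corresponding equality cases, will force $\Sigma$ to be an $\omega_0$-capillary Wulff shape.

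For Part (1), I would multiply \eqref{equ:thm-aH=b1H-w0} by $\langle X,\nu\rangle$ and integrate. Applying Corollary \ref{cor:minkowski neq} termwise with $k=j-1$, $f=a_j$ on the left (since $a_j'\ge 0$) and with $k=i-1$, $f=b_i$ on the right (since $b_i'\le 0$) yields
\[
\sum_{j=l}^{r}\int_{\Sigma}a_j H_{j-1}^F w\,d\mu_g \le \sum_{j=l}^{r}\int_{\Sigma}a_j H_j^F\langle X,\nu\rangle\,d\mu_g = \sum_{i=1}^{l-1}\int_{\Sigma}b_i H_i^F\langle X,\nu\rangle\,d\mu_g \le \sum_{i=1}^{l-1}\int_{\Sigma}b_i H_{i-1}^F w\,d\mu_g.
\]
For the reverse bound I would invoke Newton-Maclaurin pointwise: $H_j^F H_{l-1}^F \le H_{j-1}^F H_l^F$ for $j\ge l$, and $H_i^F H_{l-1}^F \ge H_{i-1}^F H_l^F$ for $1\le i\le l-1$. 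Multiplying \eqref{equ:thm-aH=b1H-w0} by $H_{l-1}^F$, substituting these estimates, dividing through by $H_l^F>0$, and integrating against $w\,d\mu_g$ produces the matching lower bound. Equality throughout must then hold; the equality case of Corollary \ref{cor:minkowski neq} identifies $\Sigma$ with an $\omega_0$-capillary Wulff shape whenever some $a_j$ or $b_i$ is non-constant, while in the remaining case where every $a_j,b_i$ is constant the pointwise Newton-Maclaurin equalities force consecutive ratios $H_k^F/H_{k-1}^F$ to coincide, so all anisotropic principal curvatures agree at each point and $\Sigma$ is again the Wulff shape.

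For Part (2) the scheme is the same in spirit, with one essential modification: the term $b_0 H_0^F = b_0$ in \eqref{equ:thm-aH=b0H-w0} breaks the Newton-Maclaurin reduction because $H_{-1}^F$ is undefined. On the Minkowski side this term contributes $b_0\int_{\Sigma}\langle X,\nu\rangle\,d\mu_g = b_0(n+1)|\Omega|$ via the divergence identity (using $\langle X,E_{n+1}\rangle=0$ on $\partial\Sigma$), whereas on the Newton-Maclaurin side the inequality $H_l^F H_0^F\le H_{l-1}^F H_1^F$ replaces $b_0 H_{l-1}^F$ by $b_0 H_l^F/H_1^F$, producing $b_0\int_{\Sigma}w/H_1^F\,d\mu_g$ after integration. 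The anisotropic capillary Heintze-Karcher inequality, valid for embedded $\Sigma$ and giving $\int_{\Sigma}w/H_1^F\,d\mu_g \ge (n+1)|\Omega|$ with equality only on the Wulff shape, is exactly the bridge between these two expressions, and this is where the embeddedness hypothesis enters. The main technical obstacle is harmonising the three equality cases (weighted Minkowski, Newton-Maclaurin, Heintze-Karcher) so that their simultaneous equality is realised only by the $\omega_0$-capillary Wulff shape; the subtlety is most pronounced when $l=1$, where there is no Newton-Maclaurin step on the $b_i$-side and the Heintze-Karcher inequality must carry the full burden of producing the reverse bound.
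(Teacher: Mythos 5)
Your proposal is correct and takes essentially the same route as the paper: multiply by $\langle X,\nu\rangle$, integrate, bound one way via the weighted Minkowski inequalities of Corollary \ref{cor:minkowski neq} (plus the Heintze--Karcher inequality for the $b_0$-term in part (2)), and the other way via a pointwise Newton--Maclaurin argument, then harvest the equality cases. The paper packages the Newton--Maclaurin step as its Lemma \ref{Lem:aH>bH} by multiplying the full sums $\sum_i b_iH_i^F$ and $\sum_j a_jH_{j-1}^F$ rather than pivoting on the pair $(H_{l-1}^F,H_l^F)$, but this is only a cosmetic difference.
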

We emphasize that $\Sigma$  in Theorem \ref{Thm-Alexan-a=b-w0}(2) should be embedded, since the use of Heintze-Karcher inequality (i.e., Lemma \ref{lem:H^{-1}-int-fornula-w0}). Here $b_0$ is a constant, which means that Theorem \ref{Thm-Alexan-a=b-w0}(2) contains the case $\sum_{j=1}^{r}a_j(\bar{u})H_j^F=const$.

Specifically, for fixed $k=1,\cdots,n$, taking $l=r=n, a_n\neq0, b_{n-k}\neq 0$ and $ b_i=0 (\forall i\neq {n-k})$ in Theorem \ref{Thm-Alexan-a=b-w0}, we then directly obtain the uniqueness of the solution to the anisotropic capillary Orlicz-Christoffel-Minkowski problem.
\begin{corollary}\label{cor:Christoffel-w0}
	Given a constant $\omega_0 \in\left(-F\left(E_{n+1}\right), F\left(-E_{n+1}\right)\right)$ and a smooth function $f>0$, satisfying $ f^{\prime} \geq 0$, then there exists a unique strictly convex anisotropic $\omega_0$-capillary  hypersurface  $\Sigma\subset \overline{\mathbb{R}^{n+1}_+}$ satisfying the equation   $\sigma_k(\lambda^F)=Cf(\bar{ u})$, and the solution  $\Sigma$ is an $\omega_0$-capillary Wulff shape. Here $C$ is a positive constant, $0<k\leq n$ and $\lambda^F=(\lambda_1^F,\cdots ,\lambda_n^F)=(\frac{1}{\kappa_1^F},\cdots ,\frac{1}{\kappa_n^F})$ are anisotropic  principal curvature radii of $\Sigma$.
\end{corollary}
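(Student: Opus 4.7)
The plan is to reduce this corollary to a direct application of Theorem \ref{Thm-Alexan-a=b-w0} after rewriting the Christoffel--Minkowski equation $\sigma_k(\lambda^F)=Cf(\bar u)$ as an identity involving only the normalized anisotropic mean curvatures $H^F_j$.

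First I would perform the algebraic translation. Since $\lambda^F_i=1/\kappa_i^F$, the elementary identity
\begin{align*}
\sigma_k\!\left(\tfrac{1}{\kappa_1^F},\ldots,\tfrac{1}{\kappa_n^F}\right)=\frac{\sigma_{n-k}(\kappa^F)}{\sigma_n(\kappa^F)},
\end{align*}
combined with $\sigma_j(\kappa^F)=\binom{n}{j}H^F_j$, shows that $\sigma_k(\lambda^F)=Cf(\bar u)$ is equivalent to
\begin{align*}
\widetilde C\, f(\bar u)\, H^F_n = H^F_{n-k}, \qquad \widetilde C:=\frac{C}{\binom{n}{n-k}}>0.
\end{align*}
This now has exactly the shape of \eqref{equ:thm-aH=b1H-w0} or \eqref{equ:thm-aH=b0H-w0}, with a single nonzero coefficient on each side.

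Next I would invoke Theorem \ref{Thm-Alexan-a=b-w0}, split into two cases. When $1\le k\le n-1$ (so that $1\le n-k\le n-1$), part (1) applies with $l=r=n$, $a_n(t)=\widetilde C f(t)$, $b_{n-k}(t)\equiv 1$, and $b_i\equiv 0$ for $i\in\{1,\ldots,n-1\}\setminus\{n-k\}$. The positivity $a_n>0$ and monotonicity $a_n'=\widetilde C f'\ge 0$ follow from $f>0$ and $f'\ge 0$; $b_{n-k}'=0\le 0$ is trivial. When $k=n$ (so that $n-k=0$), I would use part (2) with $l=r=n$, $a_n(t)=\widetilde C f(t)$, $b_0\equiv 1$, $b_i\equiv 0$ for $1\le i\le n-1$; the embeddedness requirement of part (2) is automatic for a compact strictly convex capillary hypersurface in the half-space, and $H_0^F\equiv 1$ recovers the constant right-hand side. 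In both cases Theorem \ref{Thm-Alexan-a=b-w0} concludes that $\Sigma$ must be an $\omega_0$-capillary Wulff shape.

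The uniqueness assertion then follows from Lemma \ref{lemma:u=c}, which guarantees that $\bar u$ is a positive constant on any $\omega_0$-capillary Wulff shape: on such a shape the equation reduces to the single scalar identity $\binom{n}{k}r_0^k=Cf(\bar u(r_0))$, which pins down the radius $r_0$ and hence the shape, modulo the translational freedom parallel to $\partial\mathbb{R}^{n+1}_+$ that is intrinsic to the $\omega_0$-capillary problem. I do not expect a serious obstacle here, since the rigidity content is already packaged inside Theorem \ref{Thm-Alexan-a=b-w0}; the only slightly delicate point is tracking the index shift $k\mapsto n-k$ and making sure the monotonicity assumptions on $\{a_j\}$ and $\{b_i\}$ together with the case split between parts (1) and (2) are all respected simultaneously.
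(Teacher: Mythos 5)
Your proposal matches the paper's own derivation: the paper obtains this corollary exactly by rewriting $\sigma_k(\lambda^F)=Cf(\bar u)$ as $H^F_{n-k}=\widetilde C f(\bar u)H^F_n$ and applying Theorem \ref{Thm-Alexan-a=b-w0} with $l=r=n$, $a_n=\widetilde C f$, $b_{n-k}\equiv 1$ and all other $b_i\equiv 0$ (using part (2) when $k=n$ so that $b_0$ is the constant $1$). You merely make explicit the identity $\sigma_k(1/\kappa_1^F,\ldots,1/\kappa_n^F)=\sigma_{n-k}(\kappa^F)/\sigma_n(\kappa^F)$ and the case split that the paper leaves implicit, so the two arguments are essentially identical.
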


In \cite{Xia13}, Xia studied the anisotroic Minkowski problem and  proved the existence and uniqueness of the admissible solution to the Monge-Amp\`{e}re type equation on the anisotropic support function.  In \cite{Xia-arxiv}, Mei, Wang, Weng and Xia established a theory for $\theta$-capillary convex bodies in the half-space. And they proved that a capillary convex function (must satisfies the Robin-type boundary condition) yields a capillary convex body (by capillary support function) \cite{Xia-arxiv}*{Proposition 2.6}, and vice versa \cite{Xia-arxiv}*{Lemma 2.4 (3)}. In \cite{Mei-Wang-Weng-2025,Mei-Wang-Weng-Lp-Minkowski}, Mei, Wang, Weng introduced the capillary Minkowski problem and the capillary $L_p$-Minkowski problem with $p>1,\theta\in(0,\frac{\pi}{2}]$ for capillary convex bodies in half-space. They  proved the existence and uniqueness of the admissible solution to the Monge-Amp\`{e}re type  with a Robin boundary value condition. 
 By Corollary \ref{cor:Christoffel-w0}, we use another method to obtain the uniqueness of the capillary $L_p$-Minkowski problem with $p\geq 1$ in the Euclidean capillary convex bodies geometry as follows, and the detailed proof will be presented in Section \ref{sec 4}.

\begin{corollary}
	\label{cor:isotropic-Minkowski}
	Given constants $p\geq1,\theta\in\left(0,\pi\right)$  and a smooth function $\varphi>0$ defined on the sphere cap $\mathbb{S}^n_{\theta}=\{z\in\mathbb{S}^n|\<z,E_{n+1}\>\geq\cos\theta\}$. If there exists a  strictly convex  $\theta$-capillary  hypersurface  $\Sigma\subset \overline{\mathbb{R}^{n+1}_+}$ with support function $u(z)\in C^{\infty}(\mathbb{S}^n_{\theta})$ satisfying
	the equation
	\begin{align}
	\label{equ:Minkowski-Euic}
	u^{1-p}\sigma_n(\lambda)=\varphi,
	\end{align}
	 on $\mathbb{S}_{\theta}^n,$ here  $\lambda=(\lambda_1,\cdots ,\lambda_n)=(\frac{1}{\kappa_1},\cdots ,\frac{1}{\kappa_n})$ are  principal curvature radii of $\Sigma$,
	 then
	
	 (i) for $p\neq n+1$, solution of above  equation is unique;
	
	 (ii) for $p=n+1$, solution of above  equation is unique, up to a rescaling.
\end{corollary}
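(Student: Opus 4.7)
The plan is to convert the given isotropic capillary $L_p$-Minkowski equation into an anisotropic Orlicz--Christoffel--Minkowski equation that falls under Corollary~\ref{cor:Christoffel-w0}. Suppose $u_1, u_2 \in C^\infty(\mathbb{S}^n_\theta)$ are two strictly convex solutions of \eqref{equ:Minkowski-Euic}, and let $\Sigma_1, \Sigma_2 \subset \overline{\mathbb{R}^{n+1}_+}$ be the associated $\theta$-capillary hypersurfaces. Parametrizing boundary points by $X_i = u_i \nu + \nabla^{\mathbb{S}} u_i$ and using $\langle X_i, E_{n+1}\rangle = 0$ on $\partial \Sigma_i$, both support functions obey the Robin-type boundary condition
\begin{equation*}
\langle \nabla^{\mathbb{S}} u_i, E_{n+1}\rangle + u_i \cos\theta = 0 \qquad \text{on } \partial \mathbb{S}^n_\theta.
\end{equation*}

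The central idea is to promote $u_2$ to an anisotropic support function: extend $u_2$ from $\mathbb{S}^n_\theta$ to a smooth, positive, strictly convex function $F$ on $\mathbb{S}^n$, so that $\Sigma_2$ is the capillary portion of the resulting Wulff shape. Three consequences combine in this new anisotropy. First, since $F \equiv u_2$ on the cap, for any capillary hypersurface with isotropic support function $u$ and isotropic radii $\lambda$, the anisotropic radii $\lambda^F$ satisfy $\sigma_n(\lambda^F) = \sigma_n(\lambda)/\sigma_n(\lambda_2)$ via the usual generalized-eigenvalue relation between the Hessian-plus-identity matrices of $u$ and $u_2$. Second, the Cahn--Hoffman map of the new $F$ obeys, on $\partial \mathbb{S}^n_\theta$,
\begin{equation*}
\langle \Phi(\nu), -E_{n+1}\rangle = -F \cos\theta - \langle \nabla^{\mathbb{S}} F, E_{n+1}\rangle = 0
\end{equation*}
by the Robin condition, so the isotropic $\theta$-capillary condition $\langle \nu, E_{n+1}\rangle = \cos\theta$ on $\partial \Sigma_1$ is exactly the anisotropic condition \eqref{equ:w0-capillary} with $\omega_0 = 0$. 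Third, with $\omega_0 = 0$ the anisotropic capillary support function of $\Sigma_1$ collapses to $\bar{u}_1^F = u_1/F = u_1/u_2$.

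Putting these together with the two given $L_p$ equations yields
\begin{equation*}
\sigma_n(\lambda_1^F) = \frac{\sigma_n(\lambda_1)}{\sigma_n(\lambda_2)} = \frac{u_1^{p-1}\varphi}{u_2^{p-1}\varphi} = \left(\frac{u_1}{u_2}\right)^{p-1} = (\bar{u}_1^F)^{p-1},
\end{equation*}
which is the Orlicz--Christoffel--Minkowski equation of Corollary~\ref{cor:Christoffel-w0} with $k = n$, $C = 1$, and $f(t) = t^{p-1}$. For $p \geq 1$, $f'(t) = (p-1) t^{p-2} \geq 0$ on $(0,\infty)$, so the corollary applies in the $F$-anisotropy at $\omega_0 = 0$ and forces $\Sigma_1$ to be a $0$-capillary Wulff shape for $F$, i.e., a homothetic copy of $\Sigma_2$; equivalently $u_1 = r u_2$ for some $r > 0$. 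Feeding $u_1 = r u_2$ back into $u_1^{1-p} \sigma_n(\lambda_1) = \varphi$ yields $r^{n+1-p} = 1$, which forces $r = 1$ when $p \neq n+1$ (case (i)) and leaves $r > 0$ arbitrary when $p = n+1$ (case (ii)).

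\textbf{Main obstacle.} The delicate step is the anisotropy switch: one has to construct a smooth, positive, strictly convex extension of $u_2$ from $\mathbb{S}^n_\theta$ to a genuine anisotropy $F$ on all of $\mathbb{S}^n$ (or, alternatively, argue that only the cap values of $F$ enter the hypotheses of Corollary~\ref{cor:Christoffel-w0}), and to verify that $\Sigma_1$ remains strictly convex with respect to $F$. The Robin-type boundary condition is the mechanism that makes the isotropic $\theta$-capillary condition translate to the anisotropic $\omega_0 = 0$ condition cleanly; once this is in place, the pointwise eigenvalue identity and the observation $p \geq 1 \Rightarrow f' \geq 0$ immediately yield the reduction, and Corollary~\ref{cor:Christoffel-w0} finishes the proof.
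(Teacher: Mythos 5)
Your proposal is correct and follows essentially the same route as the paper: it uses one solution to build the anisotropy $F$ (so that that solution becomes the $\omega_0=0$ capillary Wulff shape), observes that the other solution is then an anisotropic $0$-capillary hypersurface with $\sigma_n(\lambda^F)=(\bar u)^{p-1}$, and invokes Corollary \ref{cor:Christoffel-w0} before fixing the homothety factor via $r^{n+1-p}=1$. The extension step you flag as the main obstacle is present in the paper too (it simply posits a Wulff shape $\W$ with $\W\cap\overline{\mathbb{R}^{n+1}_+}=\Sigma_0$), and your verification of the $\omega_0=0$ condition via the Robin boundary identity is an equivalent substitute for the paper's use of the Gauss-map diffeomorphism.
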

\begin{remark}
	(1) We remark that the equation \eqref{equ:Minkowski-Euic} is equivalent to the Monge-Amp\`{e}re type equation   
$$u^{1-p}\det\left(u_{ij}+u\delta_{ij}\right)=\varphi,\quad $$
on $\mathbb{S}_{\theta}^n,$ with $(u_{ij}+u\delta_{ij})>0$ on $\mathbb{S}_{\theta}^n$ and $\nabla^{\mathbb{S}}_{\mu}u=\cot\theta u$ along $\partial\mathbb{S}^n_{\theta}$ (see  \cite{Xia-arxiv}),
where $u_{ij}=\nabla^{\mathbb{S}}_i\nabla^{\mathbb{S}}_j u$ is the covariant differentiation of $u$ with respect to an orthonormal frame on $\mathbb{S}^n_{\theta}$, and $\mu$ is the outward  unit co-normal to $\partial\mathbb{S}_{\theta}^n$ on $\mathbb{S}^n$.

(2) Generally, for hypersurface with variable (not constant) tangent angle $\theta$, the uniqueness of hypersurface satisfying $\nu(\Sigma)=\mathcal{A}\subset\mathbb{S}^n$ and equation \eqref{equ:Minkowski-Euic} can be similarly considered.
\end{remark}

In \cite{KKK2018}*{Theorem 1}, Kwong, Lee and Pyo used the weighted Hsiung-Minkowski formulas to prove an Alexandrov-type result for closed embedded hypersurfaces with equation
  $\sum_{j=1}^k\left(b_j(r) H_j+c_j(r) H_1 H_{j-1}\right)=\eta(r) $ in large class of Riemannian warped product manifolds, where $r$ is  radial distance and $\eta(r)$ is  radially symmetric function. We prove a similar conclusion for the  anisotropic $\omega_0$-capillary hypersurface as follows:

 \begin{theorem}\label{thm1.9}
 	Let $\omega_0 \in\left(-F\left(E_{n+1}\right), F\left(-E_{n+1}\right)\right)$, $k=1,\cdots,n,$ and $\Sigma \subset \overline{\mathbb{R}_{+}^{n+1}}$ be a $C^2$, compact, strictly convex, embedded, and anisotropic $\omega_0$-capillary hypersurface.
 	 Let $\left\{b_j(\bar{ u})\right\}_{j=1}^k$ and $\left\{c_j(\bar{ u})\right\}_{j=1}^k$ be two families of monotone increasing, smooth, non-negative, and not all vanishing functions. Suppose
 \begin{align}\label{equ:3.4}
 	\sum_{j=1}^k\left(b_j(\bar{ u}) H^F_j+c_j(\bar{ u}) H^F_1 H^F_{j-1}\right)=\eta(\bar{ u}),
 \end{align}
 for some smooth positive function $\eta(\bar{ u})$ which is monotone decreasing in $\bar{ u}$. Then  $\Sigma$ is an  $\omega_0$-capillary Wulff shape.
 \end{theorem}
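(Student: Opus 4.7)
The plan is to integrate the identity \eqref{equ:3.4} against two complementary weights and squeeze both sides using the weighted anisotropic Minkowski inequality (Corollary \ref{cor:minkowski neq}), the Newton--Maclaurin inequality $H_1^F H_{j-1}^F\geq H_j^F$, and a Heintze--Karcher type inequality for anisotropic $\omega_0$-capillary hypersurfaces (Lemma \ref{lem:H^{-1}-int-fornula-w0}). First I multiply \eqref{equ:3.4} by $\langle X,\nu\rangle$ and integrate over $\Sigma$. Since $b_j'\geq 0$ and $c_j'\geq 0$, inequality \eqref{equ:H-minkowski-w0} of Corollary \ref{cor:minkowski neq} applies directly to the $b_j H_j^F$ terms; the $c_j H_1^F H_{j-1}^F$ terms are handled by first bounding below by $c_j H_j^F$ via Newton--Maclaurin, then applying the same Minkowski inequality. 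Summing yields
\begin{align*}
\int_\Sigma \eta(\bar u)\,\langle X,\nu\rangle\,\mathrm{~d}\mu_g \;\geq\; \sum_{j=1}^k \int_\Sigma (b_j+c_j)(\bar u)\,H_{j-1}^F \bigl(F(\nu)+\omega_0\langle\nu,E_{n+1}^F\rangle\bigr)\,\mathrm{~d}\mu_g.
\end{align*}

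Next, I divide \eqref{equ:3.4} by $H_1^F$, multiply by $F(\nu)+\omega_0\langle\nu,E_{n+1}^F\rangle$ and integrate. The pointwise estimate $H_j^F/H_1^F\leq H_{j-1}^F$, valid on the strictly convex $\Sigma$ by Newton--Maclaurin, gives
\begin{align*}
\int_\Sigma \eta(\bar u)\,\frac{F(\nu)+\omega_0\langle\nu,E_{n+1}^F\rangle}{H_1^F}\,\mathrm{~d}\mu_g \;\leq\; \sum_{j=1}^k \int_\Sigma (b_j+c_j)(\bar u)\,H_{j-1}^F \bigl(F(\nu)+\omega_0\langle\nu,E_{n+1}^F\rangle\bigr)\,\mathrm{~d}\mu_g.
\end{align*}
Chaining the two estimates above yields the pivotal inequality
\begin{align*}
\int_\Sigma \eta(\bar u)\,\frac{F(\nu)+\omega_0\langle\nu,E_{n+1}^F\rangle}{H_1^F}\,\mathrm{~d}\mu_g \;\leq\; \int_\Sigma \eta(\bar u)\,\langle X,\nu\rangle\,\mathrm{~d}\mu_g.
\end{align*}
I then invoke the anisotropic Heintze--Karcher inequality for embedded $\omega_0$-capillary hypersurfaces (Lemma \ref{lem:H^{-1}-int-fornula-w0}), in the form adapted to the positive, monotone-decreasing weight $\eta(\bar u)$, to obtain the reverse of this pivotal inequality. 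Embeddedness is essential here, so that one can run the interior Montiel--Ros type argument on the bounded domain $\Omega$; the non-vanishing hypothesis on $\{b_j\}$ and $\{c_j\}$ ensures $\eta>0$, so that dividing by $H_1^F$ is legitimate.

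Equality throughout the chain forces the pointwise equality $H_1^F H_{j-1}^F = H_j^F$ for all indices $j$ with $c_j>0$ (and the analogous equalities coming from the Newton--Maclaurin step on the $b_j$ side), hence by the strict form of Newton--Maclaurin all anisotropic principal curvatures of $\Sigma$ must coincide everywhere. Combined with the $\omega_0$-capillary boundary condition \eqref{equ:w0-capillary}, Lemma \ref{lemma:u=c} then identifies $\Sigma$ as an $\omega_0$-capillary Wulff shape. The main obstacle is the weighted form of Heintze--Karcher: the unweighted version closes the argument only when $\eta\equiv\mathrm{const}$, while a general monotone-decreasing $\eta$ demands either a direct weighted lemma or a layer-cake reduction $\eta(\bar u)=\int_0^\infty \mathbf{1}_{\{\bar u<\eta^{-1}(t)\}}\,\mathrm{d}t$ combined with a sublevel-set version of Heintze--Karcher on the domain bounded by $\Sigma$. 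Once that weighted inequality is in hand, the remainder of the argument is essentially mechanical.
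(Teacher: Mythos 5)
Your chain of inequalities up to the ``pivotal inequality''
\begin{align*}
\int_\Sigma \eta(\bar u)\,\frac{F(\nu)+\omega_0\langle\nu,E_{n+1}^F\rangle}{H_1^F}\,\mathrm{d}\mu_g \;\leq\; \int_\Sigma \eta(\bar u)\,\langle X,\nu\rangle\,\mathrm{d}\mu_g
\end{align*}
is correct and is essentially the computation the paper performs. The genuine gap is exactly the step you flag yourself: to close the argument you need a Heintze--Karcher inequality with the weight $\eta(\bar u)$ inserted, and neither Lemma \ref{lem:H^{-1}-int-fornula-w0} nor anything else in the paper provides such a weighted version. Your proposed layer-cake reduction would require a Heintze--Karcher inequality restricted to the sublevel sets $\{\bar u<c\}\subset\Sigma$; these are not capillary hypersurfaces (their relative boundary inside $\Sigma$ is an uncontrolled level set of $\bar u$, not contained in $\partial\mathbb{R}^{n+1}_+$), so the Montiel--Ros argument does not localize to them, and this route does not go through as stated. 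As written, the proof is therefore incomplete.

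The fix is a normalization you have skipped: divide \eqref{equ:3.4} by $\eta(\bar u)$ at the very start. Since $\eta>0$ is monotone decreasing and each $b_j,c_j\geq 0$ is monotone increasing, the quotients $b_j/\eta$ and $c_j/\eta$ are again non-negative, smooth, monotone increasing and not all vanishing, so every hypothesis of the theorem is preserved and one may assume $\eta\equiv 1$. Your two estimates then chain into $\int_\Sigma \frac{F(\nu)+\omega_0\langle\nu,E_{n+1}^F\rangle}{H_1^F}\,\mathrm{d}\mu_g\leq\int_\Sigma\langle X,\nu\rangle\,\mathrm{d}\mu_g$, which is the reverse of the \emph{unweighted} inequality \eqref{equ:H_k-wo}; the equality case of Lemma \ref{lem:H^{-1}-int-fornula-w0} then directly identifies $\Sigma$ as an $\omega_0$-capillary Wulff shape, with no need to trace umbilicity through the Newton--Maclaurin equalities or to pass through Lemma \ref{lemma:u=c}.
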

Theorem \ref{thm1.9}  contains two special cases worth mentioning: (i) $H^F_k=\eta(\bar{ u})$ and (ii) $H^F_1 H^F_{k-1}=\eta(\bar{ u})$, where $\eta(\bar{ u})$ is a monotone decreasing function. The second case can be regarded as a "non-linear" version of the Alexandrov theorem and seems to be a new phenomenon.

Based on the Minkowski integral formula, Feeman and Hsiung \cite{FH59} proved that if there exists an integer $1\leq s\leq n-1$, with $H_s>0$ and support function $u$ satisfies either $u\leq H_{s-1}/H_s$ or $u\geq H_{s-1}/H_s$, then the hypersurface is a sphere. Later, Onat \cite{Onat2010} generalized it to anisotropic case. In \cite{Stong1960}, Stong proved the rigidity results under the condition $H_{s-1}^{1/{s-1}}\geq c\geq H_{s}^{1/{s}}$ or $H_{s-1}/H_s\geq c\geq H_{s-2}/H_{s-1}$ with constant $c$. The second author and Peng \cite{Li-Peng} generalized them to anisotropic cases. We shall consider the anisotropic capillary hypersurface, and  generalize the constant $c$ to the  function $c(\bar{ u})$.

\begin{theorem}\label{Thm-Alexan-H>c>H-w0}
	Let $\omega_0 \in\left(-F\left(E_{n+1}\right), F\left(-E_{n+1}\right)\right)$, and $\Sigma \subset \overline{\mathbb{R}_{+}^{n+1}}$ be a $C^2$, compact, strictly convex, and anisotropic $\omega_0$-capillary hypersurface.
	
	(1) Suppose there exists an integer $s$, $1<s\leq n$, and a positive function $c(t)$ with $c'(t)\leq 0$, and
	\begin{align}
		\label{equ:thm-H>c>H-w0}
			(H^F_{s-1})^{\frac{1}{s-1}}\geq c(\bar{u})\geq (H^F_{s})^{\frac{1}{s}},
	\end{align}
	at all points of $\Sigma$. Then  $\Sigma$ is an  $\omega_0$-capillary Wulff shape.
	
	(2) Suppose there exists an integer $s$, $1<s\leq n$, and a positive function $c(t)$ with $c'(t)\geq 0$, and
	\begin{align}
		\label{equ:thm-H/H>c>H/H-w0}
	\frac{H^F_{s-1}}{H^F_{s}}\geq c(\bar{u})\geq \frac{H^F_{s-2}}{H^F_{s-1}},
	\end{align}
	at all points of $\Sigma$. Then  $\Sigma$ is an  $\omega_0$-capillary Wulff shape.
\end{theorem}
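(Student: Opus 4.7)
I plan to adapt Stong's classical sphere rigidity argument to the anisotropic capillary setting by combining three ingredients: the weighted Hsiung-Minkowski inequalities of Corollary \ref{cor:minkowski neq}, the plain Minkowski identity of Theorem \ref{Thm1.2}, and the Newton-Maclaurin inequalities applied to the anisotropic principal curvatures $\kappa^F_1,\dots,\kappa^F_n$, which are real and positive under strict convexity. Throughout, I abbreviate $\psi:=F(\nu)+\omega_0\langle\nu,E_{n+1}^F\rangle>0$, so that $\langle X,\nu\rangle=\bar u\,\psi$ with $\bar u>0$.

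For Part (1), the first step is to apply Theorem \ref{Thm1.2} with $k=s-1$ and sandwich with the pointwise bounds $H_{s-1}^F\ge c(\bar u)^{s-1}$ and $H_s^F\le c(\bar u)^s$, obtaining
\[
\int_\Sigma c^{s-1}\psi\,d\mu_g\;\le\;\int_\Sigma H_{s-1}^F\psi\,d\mu_g\;=\;\int_\Sigma H_s^F\bar u\psi\,d\mu_g\;\le\;\int_\Sigma c^s\bar u\psi\,d\mu_g.
\]
The second step is to apply Corollary \ref{cor:minkowski neq} with the test function $f=c(\bar u)^{s-1}$, which satisfies $f'\le 0$ because $c'\le 0$ and $c>0$, and with $k=0$ (so $H_0^F\equiv 1$) to get
\[
\int_\Sigma c^{s-1}\psi\,d\mu_g\;\ge\;\int_\Sigma c^{s-1}H_1^F\bar u\psi\,d\mu_g.
\]
The third step is to invoke the anisotropic Newton-Maclaurin estimate $H_1^F\ge (H_{s-1}^F)^{1/(s-1)}\ge c(\bar u)$ to replace $H_1^F$ by $c$, which closes the loop and forces every intermediate inequality into equality. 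The final step is the equality analysis: the rigidity clause of Corollary \ref{cor:minkowski neq} either yields that $c(\bar u)\equiv\mathrm{const}$ on $\Sigma$ or directly that $\Sigma$ is an $\omega_0$-capillary Wulff shape; in the former subcase, equality $H_1^F=(H_{s-1}^F)^{1/(s-1)}$ pointwise together with the equality clause of Newton-Maclaurin forces $\kappa^F_1=\dots=\kappa^F_n$ on $\Sigma$, hence $\Sigma$ is a Wulff shape.

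For Part (2) the same template applies with a shorter chain and a different choice of $f$. Apply Theorem \ref{Thm1.2} with $k=s-2$ and insert the pointwise bounds $H_{s-1}^F\ge cH_s^F$ and $H_{s-2}^F\le cH_{s-1}^F$ to obtain
\[
\int_\Sigma cH_s^F\bar u\psi\,d\mu_g\;\le\;\int_\Sigma H_{s-1}^F\bar u\psi\,d\mu_g\;=\;\int_\Sigma H_{s-2}^F\psi\,d\mu_g\;\le\;\int_\Sigma cH_{s-1}^F\psi\,d\mu_g.
\]
Then apply Corollary \ref{cor:minkowski neq} with $f=c(\bar u)$ (so $f'\ge 0$) and $k=s-1$ to obtain the reverse inequality $\int_\Sigma cH_{s-1}^F\psi\,d\mu_g\le\int_\Sigma cH_s^F\bar u\psi\,d\mu_g$. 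Again all inequalities become equalities; if $c\not\equiv\mathrm{const}$ on $\Sigma$, then $\Sigma$ is a Wulff shape by Corollary \ref{cor:minkowski neq}, and otherwise the pointwise identities $H_{s-1}^F=cH_s^F$ and $H_{s-2}^F=cH_{s-1}^F$ produce $(H_{s-1}^F)^2=H_{s-2}^F H_s^F$, whose equality clause in the anisotropic Newton-Maclaurin inequality again forces $\Sigma$ to be a Wulff shape.

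The main technical obstacle is the anisotropic Newton-Maclaurin step, namely $H_1^F\ge(H_k^F)^{1/k}$ and $H_{k-1}^F H_{k+1}^F\le(H_k^F)^2$ with equality iff all anisotropic principal curvatures coincide. Since $\kappa^F_1,\dots,\kappa^F_n>0$ under strict convexity, these reduce to the classical Newton-Maclaurin inequalities on $\mathbb{R}^n$ and are available from \cite{He-Li06} and references therein. No new boundary analysis is expected, since every boundary contribution is already absorbed into the weighted formula of Corollary \ref{cor:minkowski neq}.
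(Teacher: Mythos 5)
Your proposal is correct and follows essentially the same route as the paper: both parts close a cycle of inequalities built from the Minkowski identity of Theorem \ref{Thm1.2}, the weighted inequalities of Corollary \ref{cor:minkowski neq} with the test functions $c^{s-1}$ (for $f'\le 0$) and $c$ (for $f'\ge 0$), and the Newton--Maclaurin estimates, then extract rigidity from the forced equalities. The only cosmetic difference is in the endgame of Part (2), where the paper feeds the resulting identity $H^F_{s-1}=c(\bar u)H^F_s$ into Theorem \ref{Thm-Alexan-a=b-w0}(1) while you conclude directly from the equality case of $(H^F_{s-1})^2\ge H^F_{s-2}H^F_s$; both are valid.
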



Similar to  \cite{KKK2018}*{Theorem 3}, the weighted anisotropic Hsiung-Minkowski formula can also be applied to prove rigidity theorem for self-expanding solitons to the so-called weighted generalized inverse anisotropic curvature flow
$$\frac{d}{d t} {X}=\sum_{0 \leq i<j \leq n} a_{i, j}\left(\frac{H^F_i}{H^F_j}\right)^{\frac{1}{j-i}} \nu_F,
$$
 where the weight functions $\left\{a_{i, j}(x) \mid 0 \leq i<j \leq n\right\}$ are non-negative functions on the hypersurface satisfying $\sum_{0 \leq i<j \leq n} a_{i, j}(x)=1$.


\begin{theorem}\label{thm1.10}
	Let $\omega_0 \in\left(-F\left(E_{n+1}\right), F\left(-E_{n+1}\right)\right)$, $\left\{a_{i, j}(x) \mid 0 \leq i<j \leq n\right\}$ be non-negative functions on the hypersurface satisfying $\sum_{0 \leq i<j \leq n} a_{i, j}(x)=1$, $k=\max\{j|a_{i,j}>0 \text{\ for some \ } 0\leq i<j\leq n\}$, and $\Sigma \subset \overline{\mathbb{R}_{+}^{n+1}}$ be a $C^2$, compact, strictly anisotropic   $k$-convex (i.e., $H^F_k>0$), and anisotropic $\omega_0$-capillary hypersurface.
	If there exists a constant $\beta>0$ satisfying
	\begin{align}\label{equ:kkk4.2}
		\sum_{0 \leq i<j \leq k} a_{i, j}\left(\frac{H^F_i}{H^F_j}\right)^{\frac{1}{j-i}}=\beta \bar{ u},
	\end{align}
	 then  $\Sigma$ is an  $\omega_0$-capillary Wulff shape.
\end{theorem}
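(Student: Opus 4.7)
The plan is to trap $\beta$ between $\leq 1$ and $\geq 1$ by combining the anisotropic Newton-MacLaurin inequalities with two applications of the Minkowski identity in Theorem \ref{Thm1.2}, and then extract the Wulff shape from the resulting pointwise equalities. Strict anisotropic $k$-convexity places the anisotropic principal curvatures in the G\aa rding cone $\Gamma_k$, so $H_j^F>0$ for every $0\leq j\leq k$ and Newton's inequality $(H_j^F)^2\geq H_{j-1}^F H_{j+1}^F$ makes the sequence $H_{j-1}^F/H_j^F$ nondecreasing in $j$. Viewing $(H_i^F/H_j^F)^{1/(j-i)}$ as the geometric mean of $\{H_{s-1}^F/H_s^F\}_{s=i+1}^{j}$ immediately yields the Newton-MacLaurin chain
\begin{equation*}
\frac{1}{H_1^F} \leq \left(\frac{H_i^F}{H_j^F}\right)^{\frac{1}{j-i}} \leq \frac{H_{k-1}^F}{H_k^F}, \qquad 0\leq i<j\leq k.
\end{equation*}
Since $\sum_{i<j\leq k} a_{i,j}(x)=1$, averaging \eqref{equ:kkk4.2} against these weights yields the two pointwise estimates
\begin{equation*}
\beta\bar u\, H_1^F \geq 1, \qquad \beta\bar u\, H_k^F \leq H_{k-1}^F \qquad \text{on } \Sigma.
\end{equation*}

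Next I would apply Theorem \ref{Thm1.2} with $f\equiv 1$ to the index pairs $(0,1)$ and $(k-1,k)$. Writing $w:=F(\nu)+\omega_0\langle\nu,E_{n+1}^F\rangle$ so that $\langle X,\nu\rangle=\bar u\, w$, this produces
\begin{equation*}
\int_\Sigma w\, d\mu_g = \int_\Sigma H_1^F\, \bar u\, w\, d\mu_g, \qquad \int_\Sigma H_{k-1}^F w\, d\mu_g = \int_\Sigma H_k^F\, \bar u\, w\, d\mu_g.
\end{equation*}
Integrating the first pointwise bound against $w\, d\mu_g$ gives $\int_\Sigma w \leq \beta\int_\Sigma H_1^F \bar u\, w = \beta\int_\Sigma w$, so $\beta\geq 1$; integrating the second gives $\beta\int_\Sigma H_k^F \bar u\, w \leq \int_\Sigma H_{k-1}^F w = \int_\Sigma H_k^F \bar u\, w$, so $\beta\leq 1$. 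Hence $\beta=1$, and because each nonnegative integrand vanishes in integral against the positive measure $w\,d\mu_g$, both pointwise inequalities must saturate, giving
\begin{equation*}
\bar u = \frac{1}{H_1^F} = \frac{H_{k-1}^F}{H_k^F} \qquad \text{everywhere on } \Sigma.
\end{equation*}

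For $k\geq 2$ the Newton monotonicity $H_0^F/H_1^F\leq\cdots\leq H_{k-1}^F/H_k^F$ is now pinched between equal endpoints, so every intermediate ratio equals $\bar u$; this is the equality case of Newton's inequality inside $\Gamma_k$ and forces all anisotropic principal curvatures to coincide, hence $\Sigma$ is anisotropically umbilical and the capillary condition \eqref{equ:w0-capillary} identifies it with an $\omega_0$-capillary Wulff shape. (Equivalently one can feed $\bar u\, H_k^F = H_{k-1}^F$ into Theorem \ref{Thm-Alexan-a=b-w0}(1) with $l=r=k$, $a_k\equiv 1$, $b_{k-1}(t)=1/t$.) For $k=1$ the Newton chain is empty, so I would instead insert the identity $H_1^F \bar u\equiv 1$ into the anisotropic capillary Heintze-Karcher inequality (Lemma \ref{lem:H^{-1}-int-fornula-w0}), whose rigidity clause forces $\bar u$ to be constant, and then invoke Lemma \ref{lemma:u=c} to deliver the Wulff shape. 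The main obstacle is precisely this closing step: strict $k$-convexity is needed both to keep the whole chain $\{H_j^F\}_{j=0}^{k}$ strictly positive (so the Newton-MacLaurin chain is well defined and nontrivial) and to unlock the sharp equality case of Newton's inequality, while the boundary case $k=1$ must exit the Newton-MacLaurin framework and rely on Heintze-Karcher rigidity for anisotropic capillary hypersurfaces.
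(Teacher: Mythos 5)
Your argument for $k\geq 2$ is essentially the paper's own proof: the same Newton--MacLaurin chain $1/H_1^F \leq (H_i^F/H_j^F)^{1/(j-i)}\leq H_{k-1}^F/H_k^F$, the same two integrations against the Minkowski identity \eqref{equ:Thm1.2} (for the pairs $(0,1)$ and $(k-1,k)$) to trap $\beta=1$, and the same saturation of the pointwise inequalities against the positive weight $w=F(\nu)+\omega_0\langle \nu,E_{n+1}^F\rangle$ to force anisotropic umbilicity. That part is correct. Two small caveats: you should record, as the paper does, that \eqref{equ:kkk4.2} together with Proposition \ref{prop2.1} gives $\bar u>0$ and hence $\langle X,\nu\rangle>0$, which is what makes $\int_\Sigma H_k^F\bar u\,w\,\mathrm{d}\mu_g>0$ and legitimizes the deduction $\beta\leq 1$; and your parenthetical alternative via Theorem \ref{Thm-Alexan-a=b-w0}(1) is not actually available, since that theorem assumes $\Sigma$ strictly convex whereas Theorem \ref{thm1.10} only assumes strict $k$-convexity.

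The genuine issue is your $k=1$ branch. You close it with the anisotropic Heintze--Karcher inequality (Lemma \ref{lem:H^{-1}-int-fornula-w0}), but that lemma is stated for \emph{embedded} hypersurfaces (it needs the enclosed domain $\Omega$), and embeddedness is not among the hypotheses of Theorem \ref{thm1.10}; the paper is explicit, in the remark following Theorem \ref{Thm-Alexan-a=b-w0}, that embeddedness must be added precisely where Heintze--Karcher is used. (Also, the rigidity clause of Lemma \ref{lem:H^{-1}-int-fornula-w0} yields the Wulff shape directly rather than ``$\bar u$ constant'', and Lemma \ref{lemma:u=c} would in any case require strict convexity, which you do not have.) The paper avoids all of this by staying inside the Minkowski-formula framework: when $k=1$ one has $\beta=1$ and $H_1^F\bar u\equiv 1$, so Newton--MacLaurin gives $H_2^F\bar u=H_2^F/H_1^F\leq H_1^F$ pointwise; multiplying by $w$, integrating, and comparing with \eqref{equ:Thm1.2} for $k=1$ forces equality, hence umbilicity. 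You should replace your $k=1$ argument by this (or else add embeddedness as a hypothesis for that case).
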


The rest of this paper is organized as follows. In Section \ref{sec 2}, we briefly introduce some preliminaries on the anisotropic capillary hypersurface. In Section \ref{sec 3},  we show Theorem \ref{lemma:hminkow-w0}, i.e., the generalization of the Hsiung-Minkowski formula, and derived Corollary \ref{cor:minkowski neq}.
In Section \ref{sec 4}, we use the generalization of the Hsiung-Minkowski formula to prove Theorem \ref{Thm-Alexan-a=b-w0} and then the uniqueness of the solution to capillary $L_p$-Minkowski problem with $p\geq 1$. The anisotropic Alexandrov-type Theorems \ref{thm1.9}$\sim$\ref{thm1.10} will be proved in Section \ref{sec 5}.

\section{Preliminary}\label{sec 2}
Let $F$ be a smooth positive function on the standard sphere $(\mathbb{S}^n, g_{\mathbb{S}^n} ,\nabla^{\mathbb{S}})$ such that the matrix
\begin{equation*}
A_{F}(x)~=~\nabla^{\mathbb{S}}\nabla^{\mathbb{S}} {F}(x)+F(x)g_{\mathbb{S}^n},\quad x\in \mathbb{S}^n,
\end{equation*}
is positive definite on $  \mathbb{S}^n$, where 
 $g_{\mathbb{S}^n}$ denotes the round metric on $\mathbb{S}^n$. Then there exists a unique smooth strictly convex hypersurface $\W$ defined by
\begin{align*}
\W=\{\Phi(x)|\Phi(x):=F(x)x+\nabla^{\mathbb{S}} {F}(x),~x\in \mathbb{S}^n\},
\end{align*}
whose support function is given by $F$. We call $\W$ the Wulff shape determined by the function $F\in C^{\infty}(\mathbb{S}^n)$. When $F$ is a constant, the Wulff shape is just a round sphere.

The smooth function $F$ on $\mathbb{S}^n$ can be extended homogeneously to a $1$-homogeneous function on $\mathbb{R}^{n+1}$ by
\begin{equation*}
F(x)=|x|F({x}/{|x|}), \quad x\in \mathbb{R}^{n+1}\setminus\{0\},
\end{equation*}
and setting $F(0)=0$. Then it is easy to show that $\Phi(x)=DF(x)$ for $x\in \mathbb{S}^n$, where $D$ denotes the standard gradient on $\mathbb{R}^{n+1}$. The homogeneous extension $F$ defines a Minkowski norm on $\mathbb{R}^{n+1}$, that is, $F$ is a norm on $\mathbb{R}^{n+1}$ and $D^2(F^2)$ is uniformly positive definite on $\mathbb{R}^{n+1}\setminus\{0\}$. We can define a dual Minkowski norm $F^0$ on $\mathbb{R}^{n+1}$ by
\begin{align*}
F^0(\xi):=\sup_{x\neq 0}\frac{\langle x,\xi\rangle}{{F}(x)},\quad \xi\in \mathbb{R}^{n+1}.
\end{align*}
We call $\W$ the unit Wulff shape since
 $$\W=\{x\in \R^{n+1}: F^0(x)=1\}.$$
A Wulff shape of radius $r_0$ centered at $x_0$ is given by
\begin{align*}
	\W_{r_0}(x_0)=\{x\in\mathbb{R}^{n+1}:F^0(x-x_0)=r_0\}.
\end{align*}
An $\omega_0$-capillary Wulff shape of radius $r_0$ is given by
\begin{align}\label{equ:crwo}
	\W_{r_0,\omega_0}(E):=\{x\in\overline{\mathbb{R}_+^{n+1}}:F^0(x-r_0\omega_0E)=r_0\},
\end{align}
which is a part of a Wulff shape cut by a hyperplane  $\{x_{n+1}=0\}$, here $E$ satisfies $\<E,E_{n+1}\>=1$ which guarantees that  $\W_{r_0,\omega_0}(E)$ satisfies capillary condition \eqref{equ:w0-capillary}. Obviously $E-E_{n+1}^F\in\partial\overline{\mathbb{R}^{n+1}_+}$, since $\<E_{n+1}^F,E_{n+1}\>=0$.

Let $(\Sigma,g)\subset \overline{\mathbb{R}^{n+1}_+}$ be a $C^2$ hypersurface with $\partial\Sigma\subset\partial\mathbb{R}^{n+1}_+$, which encloses a bounded domain $\Omega$. Let $\nu$ be the unit normal of $\Sigma$ pointing outward of $\Omega$.
The anisotropic Gauss map of $\Sigma$  is defined by $$\begin{array}{lll}\nu_F: &&\Sigma\to  \W\\
&&X\mapsto \Phi(\nu(X))=F(\nu(X))\nu(X)+\nabla^\mathbb{S} F(\nu(X)).\end{array} $$

 The anisotropic Weingarten map $S_F$ is the derivative of the anisotropic normal $\nu_F$. The anisotropic principal curvatures $\k^F=(\k^F_1,\cdots, \k^F_n)$ of $\Sigma$ with respect to $\W$ at $X\in \Sigma$  are defined as the eigenvalues of
 \begin{align}\label{equ:S_F}
 	S_F=\mathrm{d}\nu_F=\mathrm{d}(\Phi\circ\nu)=A_F\circ \mathrm{d}\nu : T_X \Sigma\to T_{\nu_F(X)} \W=T_X \Sigma.
 \end{align}
We define the normalized $k$-th elementary symmetric function $H^F_k=\sigma_k(\kappa^F)/\binom{n}{k}$ of the anisotropic principal curvature $\kappa^F$:
\begin{align*}
H^F_k:=\binom{n}{k}^{-1}\sum_{1\leq {i_1}<\cdots<{i_k}\leq n} \kappa^F_{i_1}\cdots \kappa^F_{i_k},\quad k=1,\cdots,n.
\end{align*}
Setting $H^F_{0}=1$ and $H^F_{n+1}=0$
for convenience. To simplify the notation, from now on, we use  $\sigma_k$ to denote function $\sigma_k(\kappa^F)$ of the anisotropic principal curvatures, if there is no confusion.
When $\Sigma=\W\cap\overline{\mathbb{R}^{n+1}_+}\subset\W$, we have $\kappa^F(\Sigma)=(1,\cdots ,1)$ (see \cite{Xia2017}), and then $H^F_k=1$.

A truncated Wulff shape is a part of a Wulff shape cut by a hyperplane  $\{x_{n+1}=0\}$.
Namely, it is an intersection of a Wulff shape and $\mathbb{R}^{n+1}_+$.
It is easy to check that the anisotropic normal of $\W_r(x_0)$ is $\nu_F(x)=\frac{x-x_0}{r}$. On $\W_r(x_0)\cap\{x_{n+1}=0\}$, we have $\<\nu_F,-E_{n+1}\>=\<\frac{x-x_0}{r},-E_{n+1}\>=\<\frac{x_0}{r},E_{n+1}\>$, which is a constant.

The boundary condition \eqref{equ:w0-capillary} implies that $\omega_0 \in(-F(E_{n+1}), F(-E_{n+1}))$ by the Cauchy-Schwarz inequality $\<x,z\>\leq F^0(x)F(z)$ (see \cite{Jia-Wang-Xia-Zhang2023}).
This Cauchy-Schwarz inequality also implies that $\bar{ u}$ in \eqref{equ:u} is well-defined, i.e.
\begin{proposition}[\cite{Jia-Wang-Xia-Zhang2023}*{Prop 3.2}]\label{prop2.1}
	For $\omega_0 \in(-F(E_{n+1}), F(-E_{n+1}))$, it holds that
	\begin{align*}
		F(z)+\omega_0\<z,E_{n+1}^F\>>0,\quad\text{for any}\ z\in \mathbb{S}^n.
	\end{align*}
\end{proposition}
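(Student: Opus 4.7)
My plan is to reduce the claim to the Cauchy-Schwarz-type inequality $\langle x,\xi\rangle\leq F(x)F^0(\xi)$ that is already invoked right before the statement of the proposition. The crucial observation is that $\Phi$ maps $\mathbb{S}^n$ onto the unit Wulff shape $\mathcal{W}=\{F^0=1\}$, so $F^0(\Phi(y))=1$ for every $y\in\mathbb{S}^n$; in particular $F^0(\Phi(E_{n+1}))=F^0(\Phi(-E_{n+1}))=1$. Combined with the Cauchy-Schwarz inequality this gives the pointwise bounds $\langle z,\Phi(\pm E_{n+1})\rangle\leq F(z)$ for all $z\in\mathbb{S}^n$, which will be the workhorse estimates in all cases.

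I would then split into the three cases in the definition of $E_{n+1}^F$. If $\omega_0=0$ the inequality reduces to $F(z)>0$, which is immediate since $z\in\mathbb{S}^n$ and $F$ is a positive Minkowski norm. If $\omega_0>0$, I substitute $E_{n+1}^F=-\Phi(-E_{n+1})/F(-E_{n+1})$ to write
\begin{align*}
F(z)+\omega_0\langle z,E_{n+1}^F\rangle
=F(z)-\frac{\omega_0}{F(-E_{n+1})}\langle z,\Phi(-E_{n+1})\rangle
\geq F(z)\Bigl(1-\tfrac{\omega_0}{F(-E_{n+1})}\Bigr),
\end{align*}
where the inequality uses $\omega_0/F(-E_{n+1})>0$ together with Cauchy-Schwarz. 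The right-hand side is strictly positive because $\omega_0<F(-E_{n+1})$ by hypothesis and $F(z)>0$. If $\omega_0<0$, I substitute $E_{n+1}^F=\Phi(E_{n+1})/F(E_{n+1})$ and multiply the Cauchy-Schwarz estimate by $\omega_0<0$ (which flips the inequality) to get
\begin{align*}
F(z)+\omega_0\langle z,E_{n+1}^F\rangle
=F(z)+\frac{\omega_0}{F(E_{n+1})}\langle z,\Phi(E_{n+1})\rangle
\geq F(z)\Bigl(1+\tfrac{\omega_0}{F(E_{n+1})}\Bigr)>0,
\end{align*}
using $-F(E_{n+1})<\omega_0$.

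I do not anticipate any real obstacle here; the proposition is essentially a restatement of Cauchy-Schwarz on the unit Wulff shape, and the only minor care is verifying that each inequality remains strict. The strictness comes for free from $F(z)>0$ on $\mathbb{S}^n$ combined with the strict inclusion $\omega_0\in(-F(E_{n+1}),F(-E_{n+1}))$, so the multiplicative factors $1\pm\omega_0/F(\mp E_{n+1})$ are strictly positive. This gives a clean two-line argument in each case without needing to analyze the equality case of Cauchy-Schwarz.
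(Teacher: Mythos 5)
Your proof is correct, and it follows exactly the route the paper indicates: the paper does not reprove this proposition but cites \cite{Jia-Wang-Xia-Zhang2023}*{Prop 3.2} and explicitly attributes it to the Cauchy--Schwarz inequality $\<x,z\>\leq F^0(x)F(z)$, which, combined with $F^0(\Phi(\pm E_{n+1}))=1$, is precisely your argument. The case split on the sign of $\omega_0$ and the strictness coming from $\omega_0\in(-F(E_{n+1}),F(-E_{n+1}))$ and $F>0$ on $\mathbb{S}^n$ are all handled correctly.
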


The generalized Newton-Maclaurin inequality \cite{Chen-Guan-Li-Scheuer2022} is as the following.
  \begin{proposition}
  	\label{Prop:Newton-Maclaurin}
  	For $\lambda=(\lambda_1,\cdots , \lambda_n) \in \Gamma_k=\left\{\lambda \in \mathbb{R}^n: \sigma_i(\lambda)>0, \forall 1 \leq i \leq k\right\}$ and $k>l \geq 0, r>s \geq 0, k \geq r, l \geq s$, we have
 \begin{align}
 \label{equ:N-Mneq}
 \left[\frac{\sigma_k(\lambda)/\binom{n}{k} }{\sigma_l(\lambda)/\binom{n}{l} }\right]^{\frac{1}{k-l}} \leq\left[\frac{\sigma_r(\lambda) /\binom{n}{r}}{\sigma_s(\lambda) /\binom{n}{s}}\right]^{\frac{1}{r-s}},
 \end{align}
 with the equality if and only if  $\lambda_1=\cdots=\lambda_n>0$.
  \end{proposition}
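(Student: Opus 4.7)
The plan is to reduce the statement to the classical Newton inequality $H_i^2\geq H_{i-1}H_{i+1}$, which is valid on $\Gamma_k$ for $1\leq i\leq k-1$, with equality only when all components of $\lambda$ coincide. Write $H_i:=\sigma_i(\lambda)/\binom{n}{i}$. Since $\lambda\in\Gamma_k$ and $k\geq r\geq s$, $k>l\geq s$, every $H_i$ with $0\leq i\leq k$ is strictly positive, so $a_i:=\log H_i$ is well-defined on $\{0,1,\ldots,k\}$, and Newton's inequality reads $a_{i-1}+a_{i+1}\leq 2a_i$, i.e.\ the discrete concavity of the sequence $a_i$. Thus the whole proposition is a convexity-theoretic statement about a concave sequence.

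Next I would exploit the standard consequence of concavity that the secant slope $\frac{a_j-a_i}{j-i}$ is monotone under shifting the interval to the right. Assuming for the moment that $0\leq s<l<k$ and $s<r<k$, I apply the three-term inequality coming from concavity of $a$ first to the triple $(s,l,k)$ and then to $(s,r,k)$ to obtain
\[
\frac{a_l-a_s}{l-s}\;\geq\;\frac{a_k-a_s}{k-s}\;\geq\;\frac{a_k-a_l}{k-l},\qquad \frac{a_r-a_s}{r-s}\;\geq\;\frac{a_k-a_s}{k-s}.
\]
Chaining these yields $\frac{a_k-a_l}{k-l}\leq\frac{a_r-a_s}{r-s}$, and exponentiating recovers the claimed inequality \eqref{equ:N-Mneq}. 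The degenerate situations $l=s$ or $k=r$ follow directly from one of the two three-term inequalities, while $(l,k)=(s,r)$ is a trivial equality.

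For the equality statement, I would argue contrapositively. If the components of $\lambda$ are not all equal, then by the classical characterization Newton's inequality is strict at every interior index $i\in\{1,\ldots,k-1\}$, so the concavity of $a_\bullet$ is strict; strict concavity then makes at least one of the two three-term slope comparisons above strict (as soon as the triples $(s,l,k)$ or $(s,r,k)$ contain three distinct indices, which is the only non-trivial case), so the chained inequality is strict. Positivity $\lambda_1=\cdots=\lambda_n>0$ at equality comes for free from $\sigma_1(\lambda)>0$ on $\Gamma_k$. The only real obstacle is careful bookkeeping of the boundary cases $l=s$, $k=r$, or $k-l=1=r-s$; no new analytic ingredient beyond Newton's inequality is required.
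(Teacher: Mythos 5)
Your argument is correct. Note that the paper does not actually prove this proposition: it is quoted as the ``generalized Newton--Maclaurin inequality'' with a citation to Chen--Guan--Li--Scheuer, so there is no in-paper proof to compare against. What you give is the standard derivation: on $\Gamma_k$ all $H_i=\sigma_i/\binom{n}{i}$ with $0\le i\le k$ are positive, Newton's inequality $H_i^2\ge H_{i-1}H_{i+1}$ ($1\le i\le k-1$) says $a_i=\log H_i$ is a concave sequence, and the hypothesis $l\ge s$, $k\ge r$ means the interval $[l,k]$ sits to the right of $[s,r]$, so the secant slope $\frac{a_k-a_l}{k-l}$ is at most $\frac{a_r-a_s}{r-s}$; your chaining through the common left endpoint $s$ via the three-slope inequality is a clean way to organize this, and the strictness discussion is right because on $\Gamma_k$ all three terms in each Newton inequality with $i\le k-1$ are positive, so equality at any interior index forces $\lambda_1=\cdots=\lambda_n$ (and then $>0$ from $\sigma_1>0$). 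The one caveat you correctly flag is that when $(k,l)=(r,s)$ the two sides coincide identically, so the ``equality iff $\lambda_1=\cdots=\lambda_n$'' clause is really a statement about the non-degenerate case $(k,l)\ne(r,s)$; this is an imprecision in the proposition as quoted, not in your proof. No gap.
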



 The following important operator $P_k$ introduced in \cite{HL08} is defined by
 \begin{align} \label{equ:P_k}
 	& P_k=\sigma_k I-\sigma_{k-1} S_F+\cdots+(-1)^k S_F^k, \quad k=0, \cdots, n.
 \end{align}
 Obviously, $P_n=0$,
 $
 P_k=\sigma_k I-P_{k-1} S_F,\ k=1,\cdots,n$, and   $\mathrm{d}\nu\circ P_k$ is symmetric for each $k$.
 \begin{lemma}[He-Li, 2008, \cite{HL08}]\label{Lemma2.1} For each $0\leq k\leq n$, and $X\in (\Sigma,g,\nabla)\subset \mathbb{R}^{n+1}$,  we have
	\begin{align}
 		\operatorname{div}\left(P_k\left(\nabla^{\mathbb{S}} F\right) \circ \nu\right)+F\left(\nu\right) \operatorname{tr}\left(P_k \circ \mathrm{d} \nu\right)&=(k+1) \sigma_{k+1},
 		\label{equ:lemma2.1.1}
 	\\
 		\operatorname{div}\left(P_k (X^{\top})\right)+\left\langle X, \nu\right\rangle \operatorname{tr}\left(P_k \circ \mathrm{d} \nu\right)& =(n-k) \sigma_k,
 		\label{equ:lemma2.1.2}
 	\end{align} 	
 and
 $$
  		\sum_{j=1}^{n}(P_k)_{ij,j}=0,\nonumber
 $$
 	where $X^{\top}=X- \left\langle X, \nu\right\rangle \nu$,  
 	 $\operatorname{div}$ is divergence operator respect to metric $g$,  and $(P_k)_{ij,j}=\nabla_{e_j}(P_k)_{ij}$ for an orthonormal frame field $\{e_i\}$ on $\Sigma$.
 	
 \end{lemma}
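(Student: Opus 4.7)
The plan is to prove the three identities in the order $\sum_j(P_k)_{ij,j}=0$, then \eqref{equ:lemma2.1.2}, then \eqref{equ:lemma2.1.1}, since the divergence-free property is the backbone used to derive the other two. Throughout I would work pointwise in a local orthonormal frame $\{e_i\}$ on $\Sigma$, write $\nabla^{\mathbb{R}}$ for the flat ambient connection on $\mathbb{R}^{n+1}$, and use repeatedly the algebraic identities $\operatorname{tr}(P_k)=(n-k)\sigma_k$ and $\operatorname{tr}(P_k\circ S_F)=(k+1)\sigma_{k+1}$, both of which follow from $P_k=\sigma_k I-\sigma_{k-1}S_F+\cdots+(-1)^kS_F^k$ via the Newton--Girard relations applied to the anisotropic principal curvatures $\kappa^F$.

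For the divergence-free identity I would induct on $k$ using the recursion $P_k=\sigma_k I-P_{k-1}S_F$ displayed in the excerpt, with the base case $P_0=I$ trivial. Differentiating and contracting the free index gives
\begin{align*}
\sum_j(P_k)^i_{j,j}=\nabla_i\sigma_k-\sum_{j,l}(P_{k-1})^i_{l,j}(S_F)^l_j-(P_{k-1})^i_l\sum_j(S_F)^l_{j,j}.
\end{align*}
The inductive hypothesis eliminates the middle term, while the remaining two must cancel via the chain-rule identity $\nabla_i\sigma_k=\sum_{j,l}(P_{k-1})^j_l(S_F)^l_{j,i}$ (expressing $\partial\sigma_k/\partial(S_F)^l_j=(P_{k-1})^j_l$) together with a suitable symmetry of $\nabla S_F$ after contraction with $P_{k-1}$.

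Once the divergence-free property is in hand, the other two identities follow by the Leibniz rule and direct computation. For \eqref{equ:lemma2.1.2}, since $X$ is the position vector we have $\nabla^{\mathbb{R}}_{e_j}X=e_j$, so $\nabla_{e_j}X^\top=e_j-\langle X,\nu\rangle d\nu(e_j)$; Leibniz then yields
\begin{align*}
\operatorname{div}(P_k(X^\top))=\operatorname{tr}(P_k\circ\nabla X^\top)=\operatorname{tr}(P_k)-\langle X,\nu\rangle\operatorname{tr}(P_k\circ d\nu)=(n-k)\sigma_k-\langle X,\nu\rangle\operatorname{tr}(P_k\circ d\nu).
\end{align*}
For \eqref{equ:lemma2.1.1}, set $Z=\nabla^{\mathbb{S}}F(\nu)=\Phi(\nu)-F(\nu)\nu$, tangent to $\Sigma$; using $d\Phi=A_F$ on $T\mathbb{S}^n$ I would compute
\begin{align*}
\nabla^{\mathbb{R}}_{e_j}Z=A_F(d\nu(e_j))-\langle Z,d\nu(e_j)\rangle\nu-F(\nu)d\nu(e_j),
\end{align*}
whose tangential projection is $S_F(e_j)-F(\nu)d\nu(e_j)$. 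Leibniz and the divergence-free property give
\begin{align*}
\operatorname{div}(P_k(Z))=\operatorname{tr}(P_k\circ S_F)-F(\nu)\operatorname{tr}(P_k\circ d\nu)=(k+1)\sigma_{k+1}-F(\nu)\operatorname{tr}(P_k\circ d\nu).
\end{align*}

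The main obstacle is the cancellation in the inductive step for the divergence-free identity. In the isotropic case $S_F=d\nu$, and the classical Codazzi equation $h_{ij,k}=h_{ik,j}$ closes the argument at once. Here $S_F=A_F\circ d\nu$ with $A_F=A_F(\nu(X))$ depending on the field $\nu$, so $\nabla S_F$ carries extra terms from differentiating $A_F$ along $d\nu$. The nontrivial point is that these extra terms are symmetric in the relevant indices because $A_F=\nabla^{\mathbb{S}}\nabla^{\mathbb{S}}F+F\,g_{\mathbb{S}^n}$ is a symmetric tensor on $\mathbb{S}^n$, and combined with Codazzi for $d\nu$ this forces the cancellation after contraction with $P_{k-1}$. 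I would verify this by direct component computation or, more conceptually, by observing that $P_{k-1}\circ S_F$ is self-adjoint with respect to the anisotropic inner product $g^F(\cdot,\cdot):=\langle A_F^{-1}\cdot,\cdot\rangle$, which makes the trace contraction symmetric under the relevant index swaps.
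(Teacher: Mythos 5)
This lemma is quoted from He--Li \cite{HL08}; the paper itself gives no proof, so your argument can only be measured against the standard one. Your reduction of \eqref{equ:lemma2.1.1} and \eqref{equ:lemma2.1.2} to the divergence-free identity is correct and is essentially that route: the traces $\operatorname{tr}(P_k)=(n-k)\sigma_k$ and $\operatorname{tr}(P_k\circ S_F)=(k+1)\sigma_{k+1}$ are the Newton-transformation identities, the tangential parts of $\nabla^{\mathbb{R}}_{e_j}X^{\top}$ and of $\nabla^{\mathbb{R}}_{e_j}(\nabla^{\mathbb{S}}F\circ\nu)$ are as you state (for the latter it is cleanest to write $\nabla^{\mathbb{S}}F\circ\nu=\nu_F-F(\nu)\nu$ and use $\mathrm{d}\nu_F=S_F$ directly), and the Leibniz rule does the rest.

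The gap is in the inductive step for $\sum_j(P_k)_{ij,j}=0$. In your displayed expansion the middle term $\sum_{j,l}(P_{k-1})^i_{l,j}(S_F)^l_j$ is a full contraction of $\nabla P_{k-1}$ against $S_F$, not the divergence $\sum_j(P_{k-1})^i_{j,j}$, so the inductive hypothesis does not eliminate it. The actual cancellation runs differently: one writes $\nabla_i\sigma_k=\operatorname{tr}(P_{k-1}\nabla_iS_F)$, applies the anisotropic Codazzi equation $\nabla_i(S_F)^l_j=\nabla_j(S_F)^l_i$ to move the differentiation index, integrates by parts inside the contraction, and only then invokes the inductive hypothesis on the resulting divergence of $P_{k-1}$; the first term then cancels the sum of the other two as a block. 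The Codazzi symmetry is the one genuinely anisotropic input, and it is obtained most directly from the commutation $D_{e_i}D_{e_j}\nu_F=D_{e_j}D_{e_i}\nu_F$ for the map $\nu_F=\Phi\circ\nu$ into flat space (its normal component is precisely the symmetry of $\mathrm{d}\nu\circ P_k$ that the paper uses later); your alternative of differentiating $A_F\circ\mathrm{d}\nu$ and invoking the symmetry of $A_F$ can be made to work, but it requires checking that the sphere-curvature terms arising from commuting derivatives in $\nabla^{\mathbb{S}}\nabla^{\mathbb{S}}\nabla^{\mathbb{S}}F$ cancel against those coming from $\nabla^{\mathbb{S}}(Fg_{\mathbb{S}^n})$, which you do not do. Finally, since neither $P_k$ nor $S_F$ is $g$-symmetric, you must fix once and for all which index of $(P_k)_{ij}$ carries the divergence and verify that it is the same index contracted in the Leibniz expansions of $\operatorname{div}(P_k(X^{\top}))$ and $\operatorname{div}(P_k(\nabla^{\mathbb{S}}F\circ\nu))$; with the wrong convention the $\nabla P_k$ term there does not drop out.
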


\section{Proof of Theorem \ref{lemma:hminkow-w0} and Corollary \ref{cor:minkowski neq}}\label{sec 3}

In this section, we denote  $\mu$ the unit outward co-normal of $\partial \Sigma\subset \Sigma$, and denote
\begin{align}
	\xi&=F(\nu)\hat{X^{\top}}+\omega_0\<E_{n+1}^F,\nu\>X^{\top}-\omega_0\<X,\nu\>(E_{n+1}^F)^{\top}\nonumber
	\\
	&=F(\nu)X-\<X,\nu\>\nu_F+\omega_0\<E_{n+1}^F,\nu\>X^{\top}-\omega_0\<X,\nu\>(E_{n+1}^F)^{\top}.\label{equ:xi}
\end{align}

To prove the generalization of anisotropic Hsiung-Minkowski formula, we need the following key lemmas.
\begin{lemma}
	\label{Lem:hiu=0}
	Let $\omega_0 \in\left(-F\left(E_{n+1}\right), F\left(-E_{n+1}\right)\right)$ and $(\Sigma,g,\nabla) \subset \overline{\mathbb{R}_{+}^{n+1}}$ be a $C^2$ compact anisotropic $\omega_0$-capillary hypersurface. Let  $\{e_{\alpha}\}_{\alpha=1}^{n-1}$ be an orthonormal frame of $\partial \Sigma$.
	 Then along $\partial \Sigma$,
	$S_F(e_{\alpha})\in T(\partial \Sigma)$,
	that is
	\begin{align*}
		\<S_F(e_{\alpha}),\mu\>=0, \ \alpha=1,\cdots,n-1  .
	\end{align*}
\end{lemma}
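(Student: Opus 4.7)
The plan is to differentiate the anisotropic capillary boundary condition along $\partial\Sigma$ and read off the desired orthogonality from the resulting identity, using only an orthogonal decomposition of $T_X\Sigma$ and a non-degeneracy argument at the end. Since \eqref{equ:w0-capillary} says that $\langle \nu_F, -E_{n+1}\rangle \equiv \omega_0$ on $\partial\Sigma$, differentiating in an arbitrary tangent direction $e_\alpha \in T(\partial\Sigma)$ and invoking \eqref{equ:S_F} gives
\[
\langle S_F(e_\alpha), E_{n+1}\rangle = \langle d\nu_F(e_\alpha), E_{n+1}\rangle = 0.
\]

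I would then use the decomposition $T_X\Sigma = T_X(\partial\Sigma) \oplus \mathbb{R}\mu$ to write $S_F(e_\alpha) = \sum_\beta c_{\alpha\beta} e_\beta + B_\alpha \mu$ with $B_\alpha = \langle S_F(e_\alpha), \mu\rangle$. Because $\partial\Sigma \subset \partial\mathbb{R}^{n+1}_+$, each $e_\beta$ lies in the hyperplane orthogonal to $E_{n+1}$, so pairing the decomposition with $E_{n+1}$ and combining with the previous display reduces the lemma to
\[
B_\alpha \langle \mu, E_{n+1}\rangle = 0.
\]
Thus it suffices to establish the non-degeneracy $\langle \mu, E_{n+1}\rangle \neq 0$ everywhere on $\partial\Sigma$.

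This non-degeneracy is the only step where the range assumption on $\omega_0$ is genuinely used. At a boundary point the $n+1$ vectors $\{e_1,\ldots,e_{n-1}\}$, $\mu$ and $\nu$ form an orthonormal basis of $T_X\mathbb{R}^{n+1}$, and $E_{n+1}$ is orthogonal to $T_X(\partial\Sigma) \subset T(\partial\mathbb{R}^{n+1}_+)$, so
\[
E_{n+1} = \langle \nu, E_{n+1}\rangle\,\nu + \langle \mu, E_{n+1}\rangle\,\mu.
\]
If $\langle \mu, E_{n+1}\rangle$ vanished at some point, then $\nu = \pm E_{n+1}$ there; substituting this into \eqref{equ:w0-capillary} and noting that $\nabla^{\mathbb{S}} F$ is tangent to $\mathbb{S}^n$ (hence orthogonal to $E_{n+1}$) would force $\omega_0 \in \{-F(E_{n+1}), F(-E_{n+1})\}$, contradicting the open-interval hypothesis.

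I expect the main obstacle to be precisely this last step: the differentiation of the boundary condition and the splitting of $S_F(e_\alpha)$ are formal, but ruling out $\langle \mu, E_{n+1}\rangle = 0$ is where the geometric content of the capillary condition has to be cashed in, and it is crucial that $\omega_0$ lies strictly inside $(-F(E_{n+1}), F(-E_{n+1}))$ rather than at either endpoint, since this is exactly the condition guaranteeing that $\Sigma$ meets $\partial\mathbb{R}^{n+1}_+$ transversally in the anisotropic sense.
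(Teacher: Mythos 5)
Your proposal is correct and follows essentially the same route as the paper's proof: differentiate the capillary condition to get $\langle S_F(e_\alpha),E_{n+1}\rangle=0$, decompose $S_F(e_\alpha)$ in the frame $\{e_\beta,\mu\}$ to reduce the claim to $\langle S_F(e_\alpha),\mu\rangle\langle\mu,E_{n+1}\rangle=0$, and rule out $\langle\mu,E_{n+1}\rangle=0$ by observing that it would force $\nu=\pm E_{n+1}$ and hence $\omega_0\in\{-F(E_{n+1}),F(-E_{n+1})\}$, contradicting the open-interval hypothesis. The only cosmetic difference is that you write the decomposition $E_{n+1}=\langle\nu,E_{n+1}\rangle\nu+\langle\mu,E_{n+1}\rangle\mu$ directly, whereas the paper phrases the same fact via the coplanarity of $\nu$, $\mu$, $E_{n+1}$.
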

\begin{proof}
	Along $\partial \Sigma$, we have by $\<\nu_F,-E_{n+1}\>=\omega_0$ that
	\begin{align}\label{equ:pfL1-1}
		0=e_{\alpha}(\<\nu_F,-E_{n+1}\>)=\<\mathrm{d} \nu_F(e_{\alpha}),-E_{n+1}\>=\<S_F(e_{\alpha}),-E_{n+1}\>.
	\end{align}
	Since $\mu$ is the unit outward co-normal of $\partial \Sigma\subset \Sigma$, $\{e_1,\cdots,e_{n-1},\mu\}$ is also an orthonormal frame of $\Sigma$. Then we can write $S_F(e_{\alpha})$ as
	\begin{align}\label{equ:pfL1-2}
		S_F(e_{\alpha})=\sum_{\beta=1}^{n-1}\<S_F(e_{\alpha}),e_{\beta}\>e_{\beta}+\<S_F(e_{\alpha}),\mu\>\mu.
	\end{align}
	Since $e_{\alpha}\in T(\partial\Sigma)$ and $\partial\Sigma\subset\partial\mathbb{R}_+^{n+1}$, we have $\<e_{\alpha},E_{n+1}\>=0$. Putting \eqref{equ:pfL1-2} into \eqref{equ:pfL1-1} we obtain for any $\alpha =1,\cdots, n-1$
	\begin{align*}
		\<S_F(e_{\alpha}),\mu\>\<\mu,E_{n+1}\>=0.
	\end{align*}
	
	To finish the proof, we just need to check $\<\mu,E_{n+1}\>\neq 0$. In fact, along $\partial \Sigma$, the three vectors $\nu,\mu,E_{n+1} \in \mathbb{R}^{n+1}$ are all orthogonal to an $(n-1)$-dimensional linear space $T(\partial\Sigma)$, then $\nu,\mu,E_{n+1}$ are on the same plane.
	Since $\nu\bot T \Sigma$ and $\mu\in T \Sigma$, we have $\nu\bot \mu$.
	If $\<\mu,E_{n+1}\>=0$, then $\nu=E_{n+1}$ or $\nu=-E_{n+1}$. Putting it into $\omega_0=\<\Phi(\nu),-E_{n+1}\>$ yields $\omega_0=-F(E_{n+1})$ or $\omega_0=F(-E_{n+1})$, this contradicts to the assumption  $\omega_0 \in\left(-F\left(E_{n+1}\right), F\left(-E_{n+1}\right)\right)$. Therefore $\<\mu,E_{n+1}\>\neq 0$.
\end{proof}

\begin{lemma}\label{lemma3.2}
		Let $\omega_0 \in\left(-F\left(E_{n+1}\right), F\left(-E_{n+1}\right)\right)$ and $(\Sigma,g,\nabla) \subset \overline{\mathbb{R}_{+}^{n+1}}$ be a $C^2$ compact anisotropic $\omega_0$-capillary hypersurface.
	Then along $\partial \Sigma$,
	\begin{align*}
		\<P_k(\xi ),\mu\>=0,\quad k=0,\cdots,n,
	\end{align*}
	where $\xi$ is defined in \eqref{equ:xi}.
\end{lemma}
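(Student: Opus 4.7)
The plan is to prove the lemma in two stages: first show that $\xi$ itself lies in $T(\partial\Sigma)$ along $\partial\Sigma$, and then bootstrap to $P_k(\xi)$ via the $S_F$-invariance of $T(\partial\Sigma)$ established in Lemma~\ref{Lem:hiu=0}. That $\xi\in T\Sigma$ globally follows at once from the definition of $\xi$: using $\<\nu_F,\nu\>=F(\nu)$ together with the fact that $X^\top$ and $(E_{n+1}^F)^\top$ are orthogonal to $\nu$, the terms $F(\nu)\<X,\nu\>$ and $-\<X,\nu\>F(\nu)$ cancel and the two $\omega_0$-terms drop out.

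The heart of the argument is verifying $\<\xi,\mu\>=0$ along $\partial\Sigma$. Dropping the tangential projections (since $\mu\in T\Sigma$), I would first reduce the expression to
\[
\<\xi,\mu\>=\bigl[F(\nu)+\omega_0\<E_{n+1}^F,\nu\>\bigr]\<X,\mu\>-\<X,\nu\>\bigl[\<\nu_F,\mu\>+\omega_0\<E_{n+1}^F,\mu\>\bigr].
\]
The key geometric input, already exploited in Lemma~\ref{Lem:hiu=0}, is that along $\partial\Sigma$ one has $E_{n+1}\perp T(\partial\Sigma)$ (because $\partial\Sigma\subset\partial\mathbb{R}^{n+1}_+$) and $\<\mu,E_{n+1}\>\neq 0$, so $E_{n+1}=\<\nu,E_{n+1}\>\nu+\<\mu,E_{n+1}\>\mu$ pointwise on the boundary. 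Three scalar identities then drive the cancellation: $\<X,E_{n+1}\>=0$, the capillary boundary condition $\<\nu_F,E_{n+1}\>=-\omega_0$, and the defining property $\<E_{n+1}^F,E_{n+1}\>=1$. The first two, read through the above decomposition of $E_{n+1}$ and the identity $\<\nu_F,\nu\>=F(\nu)$, give closed-form expressions for $\<X,\mu\>$ and $\<\nu_F,\mu\>$. Substituting these into the reduced formula for $\<\xi,\mu\>$ collapses the $\omega_0$-independent terms to $F(\nu)\<\nu,E_{n+1}\>-F(\nu)\<\nu,E_{n+1}\>=0$, while the coefficient of $\omega_0$ reduces to $\<E_{n+1}^F,\nu\>\<\nu,E_{n+1}\>+\<E_{n+1}^F,\mu\>\<\mu,E_{n+1}\>-1$, which vanishes by the third identity.

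Once $\xi\in T(\partial\Sigma)$ is secured, Lemma~\ref{Lem:hiu=0} says that $S_F$ preserves $T(\partial\Sigma)$ along $\partial\Sigma$, so an immediate induction gives $S_F^j(\xi)\in T(\partial\Sigma)$ for every $j\geq 0$. Since $P_k=\sigma_k I-\sigma_{k-1}S_F+\cdots+(-1)^kS_F^k$ is a polynomial in $I$ and $S_F$, we conclude $P_k(\xi)\in T(\partial\Sigma)$, whence $\<P_k(\xi),\mu\>=0$; the case $k=n$ is automatic from $P_n\equiv 0$. The main obstacle I anticipate is the boundary calculation in Step~2: it requires bringing all three identities to bear simultaneously and organizing the algebra so that the cancellations become transparent. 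Without the identity $\<E_{n+1}^F,E_{n+1}\>=1$, the $\omega_0$-dependent terms would not collapse, which explains why the precise vector $E_{n+1}^F$ (rather than any direction) enters the definition of $\xi$.
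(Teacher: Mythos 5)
Your proof is correct and takes essentially the same route as the paper: the key identity $\langle\xi,\mu\rangle=0$ is established from the boundary decomposition $E_{n+1}=\langle\nu,E_{n+1}\rangle\nu+\langle\mu,E_{n+1}\rangle\mu$ together with $\langle X,E_{n+1}\rangle=0$, $\langle\nu_F,E_{n+1}\rangle=-\omega_0$ and $\langle E_{n+1}^F,E_{n+1}\rangle=1$ (the paper quotes this identity from Jia--Wang--Xia--Zhang rather than rederiving it), and the passage to $P_k(\xi)$ uses the $S_F$-invariance of $T(\partial\Sigma)$ from Lemma \ref{Lem:hiu=0} exactly as the paper does, up to a harmless reordering of the two steps.
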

\begin{proof}
	 Let  $\{e_{\alpha}\}_{\alpha=1}^{n-1}$ be an orthonormal frame of $\partial \Sigma$. Along $\partial\Sigma$, by \eqref{equ:P_k} and Lemma \ref{Lem:hiu=0}, we have
	 \begin{align*}
	 	\<P_k(e_{\alpha}),\mu\>=\sigma_k\<e_{\alpha},\mu\>=0.
	 \end{align*}
	 Since $\xi \in T\Sigma$, we have
	 \begin{align*}
	 	\<P_k(\xi),\mu\>=\<P_k(\<\xi,\mu\>\mu),\mu\>=\<\xi,\mu\>\<P_k(\mu),\mu\>.
	 \end{align*}
	
	 Next we prove the claim that $\<\xi,\mu\>=0$. 
	
	 In the proof of Theorem 1.3 in reference \cite{Jia-Wang-Xia-Zhang2023}, the following equation holds on $\partial\Sigma$:
	 \begin{align*}
	 	-\langle X, \mu\rangle\left\langle \nu, E_{n+1}^F\right\rangle \omega_0+\langle X, \nu\rangle\left\langle\mu, E_{n+1}^F\right\rangle \omega_0
	 	=
	 	F(\nu)\langle X, \mu\rangle-\langle X, \nu\rangle\left\langle \nu_F, \mu\right\rangle.
	 \end{align*}
By $\<\nu,\mu\>=0$, we know $\<X,\mu\>=\<X^{\top},\mu\>$ and $\<E^F_{n+1},\mu\>=\<(E^F_{n+1})^{\top},\mu\>$, thus
\begin{align*}
	0=&\langle X, \mu\rangle\left\langle \nu, E_{n+1}^F\right\rangle \omega_0
	-\langle X, \nu\rangle\left\langle\mu, E_{n+1}^F\right\rangle \omega_0
	+F(\nu)\langle X, \mu\rangle
	-\langle X, \nu\rangle\left\langle \nu_F, \mu\right\rangle
	\\
	=&\<F(\nu)X-\<X,\nu\>\nu_F+\omega_0\<E_{n+1}^F,\nu\>X^{\top}-\omega_0\<X,\nu\>(E_{n+1}^F)^{\top},\mu\>
	\\
	=&\<\xi,\mu\>.
\end{align*}

This completes the proof.
\end{proof}

\begin{lemma}\label{lemma3.3}
	Let $\omega_0 \in\left(-F\left(E_{n+1}\right), F\left(-E_{n+1}\right)\right)$ and $(\Sigma,g,\nabla) \subset \overline{\mathbb{R}_{+}^{n+1}}$ be a $C^2$ compact anisotropic $\omega_0$-capillary hypersurface.
	Then on $ \Sigma$, for $k=0,\cdots, n$ and $\xi$ defined in \eqref{equ:xi}, we have
	\begin{align*}
		\operatorname{div}(P_k(\xi)) =(n-k)\binom{n}{k}
		\left(F(\nu)H^F_k+\omega_0\<E^F_{n+1},\nu\>
		H^F_k-\<X,\nu\>H^F_{k+1}
		\right).
	\end{align*}
\end{lemma}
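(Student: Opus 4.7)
The plan is to expand $\xi$ in a form adapted to Lemma \ref{Lemma2.1}, split $\operatorname{div}(P_k(\xi))$ into four pieces accordingly, and verify that all ``extra'' contributions cancel. Using $\nu_F = F(\nu)\nu + \nabla^{\mathbb{S}}F(\nu)$, the field $\xi$ in \eqref{equ:xi} rewrites as
\begin{align*}
\xi = F(\nu)X^{\top} - \langle X,\nu\rangle\nabla^{\mathbb{S}}F(\nu) + \omega_0\langle E_{n+1}^F,\nu\rangle X^{\top} - \omega_0\langle X,\nu\rangle (E_{n+1}^F)^{\top}.
\end{align*}
By linearity of $P_k$, this exhibits $P_k(\xi)$ as a sum of four expressions of the form (scalar on $\Sigma$)\,$\cdot$\,$P_k$(tangential vector field), where the tangential vector fields are the position projection $X^{\top}$, the anisotropic gradient $\nabla^{\mathbb{S}}F(\nu)$, and the constant-vector projection $(E_{n+1}^F)^{\top}$.

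Applying $\operatorname{div}(fV) = \langle\nabla^{\Sigma}f,V\rangle + f\operatorname{div}(V)$ to each piece, the divergences $\operatorname{div}(P_k(X^{\top}))$ and $\operatorname{div}(P_k(\nabla^{\mathbb{S}}F\circ\nu))$ are supplied directly by \eqref{equ:lemma2.1.2} and \eqref{equ:lemma2.1.1}. For the constant vector $E_{n+1}^F$, one mimics the proof of \eqref{equ:lemma2.1.2}: since $\nabla_{e_i}E_{n+1}^F = 0$, the ``identity'' contribution drops out and the divergence-free property $\sum_j(P_k)_{ij,j}=0$ yields
\begin{align*}
\operatorname{div}\bigl(P_k((E_{n+1}^F)^{\top})\bigr) = -\langle E_{n+1}^F,\nu\rangle\operatorname{tr}(P_k\circ\mathrm{d}\nu).
\end{align*}
Assembling the four contributions, the $\operatorname{tr}(P_k\circ\mathrm{d}\nu)$ coefficients cancel in pairs: $-F(\nu)\langle X,\nu\rangle + \langle X,\nu\rangle F(\nu) = 0$, and analogously $-\omega_0\langle E_{n+1}^F,\nu\rangle\langle X,\nu\rangle + \omega_0\langle X,\nu\rangle\langle E_{n+1}^F,\nu\rangle = 0$.

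The remaining ``gradient'' contributions arrange into two pairs,
\begin{align*}
& \langle\nabla^{\Sigma}F(\nu),P_k(X^{\top})\rangle - \langle\nabla^{\Sigma}\langle X,\nu\rangle,P_k(\nabla^{\mathbb{S}}F(\nu))\rangle, \\
& \omega_0\langle\nabla^{\Sigma}\langle E_{n+1}^F,\nu\rangle,P_k(X^{\top})\rangle - \omega_0\langle\nabla^{\Sigma}\langle X,\nu\rangle,P_k((E_{n+1}^F)^{\top})\rangle.
\end{align*}
Using $\nabla^{\Sigma}F(\nu) = \mathrm{d}\nu(\nabla^{\mathbb{S}}F(\nu))$, $\nabla^{\Sigma}\langle X,\nu\rangle = \mathrm{d}\nu(X^{\top})$, and $\nabla^{\Sigma}\langle E_{n+1}^F,\nu\rangle = \mathrm{d}\nu((E_{n+1}^F)^{\top})$, each pair takes the shape $\langle\mathrm{d}\nu(A),P_k(B)\rangle - \langle\mathrm{d}\nu(B),P_k(A)\rangle$. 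The symmetry of $\mathrm{d}\nu\circ P_k$ noted in Section~\ref{sec 2}, combined with the symmetry of $\mathrm{d}\nu$ itself, gives $\langle\mathrm{d}\nu(A),P_k(B)\rangle = \langle A,\mathrm{d}\nu P_k(B)\rangle = \langle\mathrm{d}\nu P_k(A),B\rangle = \langle\mathrm{d}\nu(B),P_k(A)\rangle$, so both pairs vanish.

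What survives is precisely $(n-k)\sigma_k\bigl(F(\nu)+\omega_0\langle E_{n+1}^F,\nu\rangle\bigr) - (k+1)\sigma_{k+1}\langle X,\nu\rangle$. Converting via $\sigma_k = \binom{n}{k}H_k^F$ and the combinatorial identity $(k+1)\binom{n}{k+1} = (n-k)\binom{n}{k}$ produces the claimed formula. The main obstacle is the cancellation of the two gradient pairs: structurally the two terms in each pair come from different sources (Leibniz derivative of different scalar factors), and their agreement is exactly the content of the symmetry of $\mathrm{d}\nu\circ P_k$. The rest of the argument is bookkeeping, extending the proofs of \eqref{equ:lemma2.1.1}--\eqref{equ:lemma2.1.2} to track the new constant-vector term and the mixing introduced by the boundary datum $\omega_0$.
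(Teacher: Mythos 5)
Your proposal is correct and follows essentially the same route as the paper: both decompose $\operatorname{div}(P_k(\xi))$ via the Leibniz rule into pieces handled by the He--Li identities \eqref{equ:lemma2.1.1}--\eqref{equ:lemma2.1.2}, compute the constant-vector contribution from $\sum_j(P_k)_{ij,j}=0$ together with the Weingarten formula, and kill the cross terms using the symmetry of $\mathrm{d}\nu\circ P_k$. The only difference is organizational — the paper routes the first two pieces through the auxiliary function $\hat{u}=\langle X,\nu\rangle/F(\nu)$ and the field $\hat{X^{\top}}$, while you treat all four pieces symmetrically — and your verification of the pairwise cancellations is accurate.
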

\begin{proof}
	For $X\in\Sigma$, we denote that $\hat{u}(X)=\frac{\<X,\nu(X)\>}{F(\nu(X))}$ and $\hat{X^{\top}}=X-\hat{ u}\nu_F=X^{\top}-\hat{ u}\nabla^{\mathbb{S}}F$, then we can check that $\nabla\hat{ u}=F(\nu)^{-1}\mathrm{d}\nu(X^{\top})-\<X,\nu\>F(\nu)^{-2}\mathrm{d}\nu(\nabla^{\mathbb{S}}F)=F(\nu)^{-1}\mathrm{d}\nu(\hat{X^{\top}})$.
	
	By \eqref{equ:lemma2.1.1} and \eqref{equ:lemma2.1.2}, we have that
	\begin{align*}
		\operatorname{div}(P_{k}(X^{\top}))-\hat{ u}\operatorname{div}(P_{k}(\nabla^{\mathbb{S}}F\circ \nu))=	{(n-k)\binom{n}{k}}(H^F_{k}-H^F_{k+1}\hat{ u})		
		,\ k=0,\cdots, n.
	\end{align*}
	Combining with
	\begin{align*}
		\operatorname{div}(\hat{ u}P_k(\nabla^{\mathbb{S}}F))=\< \nabla\hat{ u}, P_k(\nabla^{\mathbb{S}}F)\>+\hat{ u}\operatorname{div}(P_k(\nabla^{\mathbb{S}}F)),
	\end{align*}
	we can derive
	\begin{align}
		&\operatorname{div}\left(
		F(\nu)P_k(\hat{X^{\top}})
		\right)\nonumber
		\\
		=&\< \nabla F(\nu), P_k(\hat{X^{\top}})\>
		+F(\nu)\operatorname{div}(P_k(X^{\top}))-F(\nu)\operatorname{div}(\hat{ u}P_k(\nabla^{\mathbb{S}}F))\nonumber
		\\
		=&\< \mathrm{d}\nu(\nabla^{\mathbb{S}}F), P_k(\hat{X^{\top}})\>
		+F(\nu)(n-k)\binom{n}{k}(H^F_k-H^F_{k+1}\hat{ u})-F(\nu)\<\nabla\hat{ u},P_k(\nabla^{\mathbb{S}}F)\>\nonumber
		\\
		=&\<\nabla^{\mathbb{S}}F,\mathrm{d}\nu\circ P_k(\hat{X^{\top}})\>
		+F(\nu)(n-k)\binom{n}{k}(H^F_k-H^F_{k+1}\hat{ u})-\<\hat{X^{\top}},\mathrm{d}\nu\circ P_k(\nabla^{\mathbb{S}}F)\>\nonumber
		\\
		=&F(\nu)(n-k)\binom{n}{k}(H^F_k-H^F_{k+1}\hat{ u}).\label{equ:pfthm1.3-1}
	\end{align}
	where the last equality is due to the symmetry of $\mathrm{d}\nu\circ P_k$.
	
	Using Lemma \ref{Lemma2.1} and Weingarten  formula $D_{e_i}e_j=-h_{ij}\nu$ where $h_{ij}$ denotes the second fundamental form under the normal coordinates $\{e_i\}_{i=1}^n\subset T_X\Sigma$, we have
	\begin{align}
		&\operatorname{div}\left(\<E_{n+1}^F,\nu\>P_k(X^{\top})\right)\nonumber
		\\
		=&\<\nabla(\<E^F_{n+1},\nu\>),P_k(X^{\top})\>+\<E^F_{n+1},\nu\>\operatorname{div}(P_k(X^{\top}))\nonumber
		\\
		=&\<\mathrm{d}\nu(E^F_{n+1})^{\top},P_k(X^{\top})\>+(n-k)\sigma_k\<E^F_{n+1},\nu\>-\<E^F_{n+1},\nu\>\<X,\nu\>\operatorname{tr}(P_k\circ \mathrm{d}\nu),\nonumber
	\end{align}
	and
	\begin{align*}
		&\operatorname{div}\left(
		\<X,\nu\>P_k((E_{n+1}^F)^{\top})
		\right)
		\\
		=&\<\nabla(\<X,\nu\>),P_k((E_{n+1}^F)^{\top})\>
		+\<X,\nu\>\left(
		(P_k)_{ij}\<E^F_{n+1},e_i\>
		\right)_{,j}
		\\
		=&\<\mathrm{d}\nu(X^{\top}),P_k((E_{n+1}^F)^{\top})\>
		+\<X,\nu\>\left(
		(P_k)_{ij}\<E^F_{n+1},-h_{ij}\nu
		\right)
		\\
		=&\<X^{\top},\mathrm{d}\nu\circ P_k((E_{n+1}^F)^{\top})\>-\<E^F_{n+1},\nu\>\<X,\nu\>\operatorname{tr}(P_k\circ \mathrm{d}\nu),
	\end{align*}
	which implies
	\begin{align}\label{equ:pf(2)}
		\operatorname{div}\left(
		P_k(\omega_0(\<E_{n+1}^F,\nu\>X^{\top}-\<X,\nu\>(E^F_{n+1})^{\top})
		\right)=\omega_0
		(n-k)\sigma_k\<E^F_{n+1},\nu\>.
	\end{align}
	Combination of \eqref{equ:pfthm1.3-1} and \eqref{equ:pf(2)} yields
	\begin{align*}
		\operatorname{div}(P_k(\xi)) =F(\nu)(n-k)\binom{n}{k}(H^F_k-H^F_{k+1}\hat{ u})
		+\omega_0
		(n-k)\sigma_k\<E^F_{n+1},\nu\>.
	\end{align*}
\end{proof}

\begin{proof}[\textbf{Proof of Theorem \ref{lemma:hminkow-w0}}]

	Using Lemmas \ref{lemma3.2}, \ref{lemma3.3} and the divergence theorem we have
	\begin{align*}
		0=&\int_{\partial\Sigma}
		\<f\cdot P_k(\xi),\mu\>
		\mathrm{~d}\mu_{\partial\Sigma}
		=\int_{\Sigma}
		\operatorname{div}\left(
		f\cdot P_k(\xi)
		\right)
		\mathrm{~d}\mu_g
		\\
		=&\int_{\Sigma}
		\<\nabla f,P_k(\xi)\>
		+f\operatorname{div}\left(
		P_k(\xi)
		\right)
		\mathrm{~d}\mu_g
		\\
		=&\int_{\Sigma}
		\<\nabla f,P_k(\xi)\>
		+f(n-k)\binom{n}{k}
		\left(F(\nu)H^F_k+\omega_0\<E^F_{n+1},\nu\>
		H^F_k-\<X,\nu\>H^F_{k+1}
		\right)
		\mathrm{~d}\mu_g,
	\end{align*}
	which yields the weighted anisotropic Hsiung-Minkowski formula \eqref{equ:Thm1.3}.
\end{proof}

\begin{lemma}\label{lemma:u=c}
	Let $\omega_0 \in\left(-F\left(E_{n+1}\right), F\left(-E_{n+1}\right)\right)$ and $(\Sigma,g,\nabla) \subset \overline{\mathbb{R}_{+}^{n+1}}$ be a $C^2$, compact,  strictly convex, and anisotropic $\omega_0$-capillary hypersurface.  Then
	$\bar{ u}$ is a non-zero constant, if and only if $\Sigma$ is an $\omega_0$-capillary Wulff shape, which is a part of a Wulff shape $\W_{r_0}(x_0),x_0=r_0\omega_0E_{n+1}^F.$
\end{lemma}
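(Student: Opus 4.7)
The plan is to prove both implications directly from the definition of $\bar{u}$, relying on the identity $DF(\nu)=\Phi(\nu)=\nu_F$, where $D$ denotes the Euclidean gradient of the $1$-homogeneous extension of $F$. This identity is immediate from Euler's relation $\langle DF(\nu),\nu\rangle=F(\nu)$, which supplies the radial part $F(\nu)\nu$, together with the fact that the tangential component of $DF$ on $\mathbb{S}^n$ is precisely $\nabla^{\mathbb{S}}F(\nu)$.

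For sufficiency, suppose $\Sigma\subset\mathcal{W}_{r_0}(x_0)$ with $x_0=r_0\omega_0 E_{n+1}^F$. At any $X\in\Sigma$, the anisotropic Gauss map of a Wulff shape of radius $r_0$ centered at $x_0$ is $\nu_F(X)=(X-x_0)/r_0$, so $\langle\nu_F,\nu\rangle=F(\nu)$ yields $\langle X-x_0,\nu\rangle=r_0 F(\nu)$. Substituting the center gives $\langle X,\nu\rangle=r_0\bigl(F(\nu)+\omega_0\langle E_{n+1}^F,\nu\rangle\bigr)$, hence $\bar{u}\equiv r_0$.

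For necessity, suppose $\bar{u}\equiv c$ on $\Sigma$. This is equivalent to the pointwise identity
\[
\langle X-c\omega_0 E_{n+1}^F,\nu\rangle=c\,F(\nu)\quad\text{on }\Sigma.\qquad(\ast)
\]
I would differentiate $(\ast)$ along an arbitrary $e\in T_X\Sigma$. Since $\langle e,\nu\rangle=0$ and $e(F\circ\nu)=\langle DF(\nu),d\nu(e)\rangle=\langle\nu_F,d\nu(e)\rangle$, this yields
\[
\langle X-c\omega_0 E_{n+1}^F-c\,\nu_F,\,d\nu(e)\rangle=0 \quad\text{for every }e\in T_X\Sigma.
\]
Strict convexity of $\Sigma$ makes $d\nu:T_X\Sigma\to T_\nu\mathbb{S}^n=\nu^\perp$ a linear isomorphism, so the vanishing above forces the vector $W:=X-c\omega_0 E_{n+1}^F-c\,\nu_F$ to lie in $(\nu^\perp)^\perp=\mathrm{span}(\nu)$. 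Pairing $W$ with $\nu$ and invoking $(\ast)$ gives $\langle W,\nu\rangle=\langle X,\nu\rangle-c\omega_0\langle E_{n+1}^F,\nu\rangle-c\,F(\nu)=0$, so $W\equiv 0$. Consequently $X-c\omega_0 E_{n+1}^F=c\,\nu_F(X)\in c\,\mathcal{W}$, i.e., $F^0(X-c\omega_0 E_{n+1}^F)=c$, so every $X\in\Sigma$ lies on $\mathcal{W}_c(c\omega_0 E_{n+1}^F)$. Combined with $\partial\Sigma\subset\partial\mathbb{R}^{n+1}_+$ and the $\omega_0$-capillary boundary condition satisfied by $\Sigma$, this forces $\Sigma=\mathcal{W}_{c,\omega_0}(E_{n+1}^F)$, the $\omega_0$-capillary Wulff shape of radius $r_0=c$.

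The delicate point is the use of strict convexity to invert $d\nu$; without it, one would only conclude $W\perp d\nu(T_X\Sigma)$, which may fail to pin down $W$. The hypothesis $c\neq 0$ is automatic in the necessity direction, since $c=0$ would force $\langle X,\nu\rangle\equiv 0$ along $\Sigma$, so every tangent plane of $\Sigma$ would pass through the origin, contradicting strict convexity of a compact hypersurface.
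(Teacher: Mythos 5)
Your proof is correct and follows essentially the same route as the paper's: the sufficiency direction is identical, and for necessity both arguments differentiate the constancy identity, use strict convexity to invert the Weingarten map and conclude that $X-c\omega_0E_{n+1}^F-c\,\nu_F$ is normal, then pair with $\nu$ to show it vanishes (the paper phrases this with $\nabla^{\mathbb{S}}F$ and an undetermined factor $\lambda$, but the computation is the same). Your closing remark that $c\neq 0$ is automatic is a small correct addition not made explicit in the paper.
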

\begin{proof}
	$(\Leftarrow)$ If $\Sigma= \W_{r_0}(x_0)\cap \overline{\mathbb{R}_{+}^{n+1}},\ x_0=r_0\omega_0E_{n+1}^F.$
	It easy to check that $\Sigma$ is an anisotropic  $\omega_0$-capillary hypersurface, since $\<\frac{X-x_0}{r_0},-E_{n+1}\>=\omega_0$ for $X\in\partial\Sigma.$
	
	For $X\in \Sigma$, we have $\nu_F=\frac{X-x_0}{r_0}$, then
	\begin{align*}
		\frac{1}{r_0}\<X,\nu\>=\<\frac{X-x_0}{r_0}+\frac{x_0}{r_0},\nu\>=\<\nu_F,\nu\>+\<\frac{x_0}{r_0},\nu\>=F(\nu)+\<\omega_0E_{n+1}^F,\nu\>>0,
	\end{align*}
	where in the last inequality we use Proposition \ref{prop2.1}. Then $\bar{ u}=\frac{\<X,\nu\>}{F(\nu)+\omega_0\<E_{n+1}^F,\nu\>}\equiv r_0$.\\
	
	$(\Rightarrow)$ If $\bar{ u}\equiv r_0$, then
	\begin{align}
		\label{equ:pfL2-1}
		\<X,\nu(X)\>=r_0\left(
		F(\nu(X))+\omega_0\<\nu(X),E_{n+1}^F\>
		\right),\quad X\in\Sigma.
	\end{align}
	Let $\{e_i\}_{i=1}^{n}$ be an orthonormal frame of $ \Sigma$, and $h_i^j$ be the Weingarten transformation matrix.
	Using  Gauss formula $\partial_i\nu=h_i^ke_k$ and $\nabla F(\nu)=\mathrm{d}\nu(\nabla^{\mathbb{S}}F\circ \nu)$, we have
	\begin{align*}
		&	\<e_i,\nu\>+\<X,D_{e_i}\nu\>=r_0D_{e_i}(F(\nu))+r_0\omega_0\<D_{e_i}\nu,E_{n+1}^F\>
		\\
		\Rightarrow&
		\<X,h^k_ie_k\>=r_0h^k_i\<\nabla^{\mathbb{S}}F\circ\nu,e_k\>+r_0\omega_0\<h^k_ie_k,E_{n+1}^F\>
		\\
		\Rightarrow&h_i^k\<e_k,X-r_0\nabla^{\mathbb{S}}F\circ \nu-r_0\omega_0E_{n+1}^F\>=0.
	\end{align*}
	The strictly convexity of $\Sigma$ implies $\{h_i^k\}>0$, thus $\<e_k,X-r_0\nabla^{\mathbb{S}}F\circ \nu-r_0\omega_0E_{n+1}^F\>=0$, for $k=1,\cdots,n$.
	Then there exists a function $\lambda(X)$ satisfies $X-r_0\nabla^{\mathbb{S}}F\circ \nu-r_0\omega_0E_{n+1}^F=\lambda\nu$, that is
	\begin{align}\label{equ:pfL2-2}
		X=r_0\nabla^{\mathbb{S}}F\circ \nu(X)+r_0\omega_0E_{n+1}^F+\lambda\nu(X).
	\end{align}
	Putting \eqref{equ:pfL2-2} into \eqref{equ:pfL2-1} yields
	\begin{align*}
		\<r_0\nabla^{\mathbb{S}}F\circ \nu+r_0\omega_0E_{n+1}^F+\lambda\nu, \nu\>=r_0\left(
		F(\nu)+\omega_0\<\nu,E_{n+1}^F\>
		\right),
	\end{align*}
	which implies $\lambda=r_0F(\nu)$. Then \eqref{equ:pfL2-2} can be rewritten as
	\begin{align*}
		X=r_0\nabla^{\mathbb{S}}F\circ \nu+r_0\omega_0E_{n+1}^F+r_0F(\nu)\nu=r_0\nu_F+r_0\omega_0E_{n+1}^F.
	\end{align*}
	Assume $x_0=r_0\omega_0E_{n+1}^F$, we have
	\begin{align*}
		F^0(X-x_0)=F^0(r_0\nu_F)=r_0, \quad X\in \Sigma,
	\end{align*}
	i.e., $\Sigma\subset \W_{r_0}(x_0)$. Since  $\Sigma\subset \overline{\mathbb{R}_{+}^{n+1}}$ is a compact,  anisotropic $\omega_0$-capillary hypersurface, we have $\Sigma= \W_{r_0}(x_0)\cap \overline{\mathbb{R}_{+}^{n+1}}.$
\end{proof}
\begin{proof}[\textbf{Proof of Corollary \ref{cor:minkowski neq}}]
	Since
 \begin{align}
 	\nabla\bar{u}
 	=&{(F(\nu)+\omega_0\<\nu,E_{n+1}^F\>)^{-2}}
 	\left(
 	(F(\nu)+\omega_0\<\nu,E_{n+1}^F\>)\nabla(\<X,\nu\>)
 		\right. \nonumber
 		\\
 		&\left.
 		\qquad\qquad\qquad\qquad\qquad\qquad-
 		\<X,\nu\>(\nabla F(\nu)+\omega_0\nabla(\<E_{n+1}^F,\nu\>))\right) \nonumber
 	\\
 	=&(F(\nu)+\omega_0\<\nu,E_{n+1}^F\>)^{-2} \mathrm{d}\nu\left(F(\nu)\hat{X^{\top}}+\omega_0\<E_{n+1}^F,\nu\>X^{\top}-\omega_0\<X,\nu\>(E_{n+1}^F)^{\top}\right) \nonumber
 	\\
 	=&\left(F(\nu)+\omega_0\<E_{n+1}^F,\nu\>\right)^{-2}\mathrm{d}\nu(\xi), \label{equ:du=dv(Xi)}
 \end{align}
	we have
	$$\nabla (f(\bar{ u}))=f'(\bar{ u})\nabla\bar{ u}=f'(\bar{ u})\left(F(\nu)+\omega_0\<E_{n+1}^F,\nu\>\right)^{-2}\mathrm{d}\nu(\xi).$$
	By Theorem \ref{lemma:hminkow-w0} and the  symmetry of $\mathrm{d} \nu$, we obtain
	\begin{align*}
		&\int_{\Sigma} f(\bar{ u})\left(H^F_k(F(\nu)+\omega_0 \<\nu,E_{n+1}^F\>)-\<X,\nu\>H^F_{k+1}\right) \mathrm{~d}\mu_g
		\\
		=&
		-\frac{1}{\binom{n}{k}(n-k)}\int_{\Sigma}\< \nabla( f(\bar{ u})), P_k(\xi)\> \mathrm{~d}\mu_g
		\\
		=&-\frac{1}{\binom{n}{k}(n-k)}\int_{\Sigma}\< f'(\bar{ u})\left(F(\nu)+\omega_0\<E_{n+1}^F,\nu\>\right)^{-2}\mathrm{d}\nu(\xi), P_k(\xi)\> \mathrm{~d}\mu_g
		\\
		=&-\frac{f'(\bar{ u})\left(F(\nu)+\omega_0\<E_{n+1}^F,\nu\>\right)^{-2}}{\binom{n}{k}(n-k)}\int_{\Sigma}\< \xi, \mathrm{d}\nu\circ P_k(\xi)\> \mathrm{~d}\mu_g.
	\end{align*}
	Since $\Sigma$ is strictly convex hypersurface, we know $ \mathrm{d}\nu \circ P_k$ is positive define. If $f'(\bar{ u})\geq 0$ in $\Sigma$, we have
	\begin{align*}
		\int_{\Sigma} f(\bar{ u})\left(H^F_k(F(\nu)+\omega_0 \<\nu,E_{n+1}^F\>)-\<X,\nu\>H^F_{k+1}\right) \mathrm{~d}\mu_g\leq 0,
	\end{align*}
	where	equality holds if and only if $f'(\bar{ u})=0$ or $\xi=0$, the latter is equivalent to $ \bar{ u}\equiv const$ by \eqref{equ:du=dv(Xi)}, that means
	$\Sigma$ is an $\omega_0$-capillary Wulff shape in view of Lemma \ref{lemma:u=c}.
	
	The proof of case $f'(\bar{ u})\leq 0$ is similar.
\end{proof}

\section{Proof of theorem \ref{Thm-Alexan-a=b-w0} and uniqueness of the $L_p$-Minkowski problem}\label{sec 4}

The aim of this section is to derive Theorem \ref{Thm-Alexan-a=b-w0}, and as application, we then provide a new proof of the uniqueness of the solution to $L_p$-Minkowski problem with $p\geq 1$ in the Euclidean capillary convex bodies geometry.

First, we need the following anisotropic Heintze-Karcher inequality.
\begin{lemma}[\cite{Jia-Wang-Xia-Zhang2023}*{Theorem 1.2}] \label{lem:H^{-1}-int-fornula-w0}
	Let $\omega_0 \in\left(-F\left(E_{n+1}\right), F\left(-E_{n+1}\right)\right)$ and $\Sigma \subset \overline{\mathbb{R}_{+}^{n+1}}$ be a $C^2$ compact embedded strictly $F$-mean convex (i.e. $H_1^F>0$) hypersurface with boundary $\partial \Sigma \subset$ $\partial \mathbb{R}_{+}^{n+1}$ such that
	$$
	\left\langle\Phi(\nu(x)),-E_{n+1}\right\rangle=\omega(x) \leq \omega_0, \quad \text { for any } x \in \partial \Sigma.
	$$
	Then it holds that, for $ X\in\Sigma$, we have
	\begin{align}\label{equ:H_k-wo}
		\int_{\Sigma} \frac{F(\nu)+\omega_0\left\langle \nu, E_{n+1}^F\right\rangle}{H_1^F} \mathrm{~d}\mu_g \geq {(n+1)}|\Omega|=\int_{\Sigma}\<X,\nu\> \mathrm{~d}\mu_g,
	\end{align}	
	where equality  holds if and only if $\Sigma$ is an $\omega_0$-capillary Wulff shape.
\end{lemma}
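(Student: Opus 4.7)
The plan is to split the assertion into the identity $(n+1)|\Omega| = \int_\Sigma \langle X,\nu\rangle \,\mathrm{d}\mu_g$, which is elementary, and the inequality, which I would prove by a Reilly-formula method adapted to the anisotropic capillary setting. For the identity, set $T := \overline{\Omega}\cap \partial\mathbb{R}_+^{n+1}$ so that $\partial\Omega = \Sigma \cup T$ with outward unit normal $-E_{n+1}$ on $T$; since $\operatorname{div} X = n+1$ in $\Omega$ and $\langle X, -E_{n+1}\rangle \equiv 0$ along $T \subset \{x_{n+1}=0\}$, the divergence theorem yields the stated equality at once.

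For the inequality, I would solve the mixed boundary value problem
\begin{equation*}
\mathcal{L}_F v = n+1 \ \text{in}\ \Omega, \qquad v = 0 \ \text{on}\ \Sigma, \qquad \langle\nabla v, -E_{n+1} + \omega_0\, E_{n+1}^F\rangle = 0 \ \text{on}\ T,
\end{equation*}
where $\mathcal{L}_F$ is an anisotropic Laplace-type operator of the form $\mathcal{L}_F v = \operatorname{tr}(A_F\,\nabla^2 v)$ with $A_F$ as in Section~\ref{sec 2}. Solvability in $C^{2,\alpha}(\overline{\Omega})$ would follow from standard elliptic theory using the strict $F$-mean convexity of $\Sigma$, the Robin-type condition on $T$ being dictated precisely by the capillary geometry so that the directional vector $-E_{n+1}+\omega_0 E_{n+1}^F$ remains transversal to $T$. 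An anisotropic Bochner-Reilly identity applied to $v$ then produces, schematically,
\begin{equation*}
\int_\Omega \bigl[(\mathcal{L}_F v)^2 - |\nabla_F^2 v|_F^2\bigr]\,dV = \int_\Sigma H_1^F (\partial_{\nu_F} v)^2 \bigl(F(\nu)+\omega_0\langle\nu, E_{n+1}^F\rangle\bigr)\,\mathrm{d}\mu_g + \int_T B_T\,d\sigma,
\end{equation*}
where the density $B_T$ is rendered nonnegative by the chosen Robin condition combined with the capillary hypothesis $\omega(x)\leq\omega_0$ along $\partial\Sigma\subset T$. The pointwise trace inequality $(\operatorname{tr} A)^2 \leq n|A|^2$ applied to the anisotropic Hessian yields $|\nabla_F^2 v|_F^2 \geq (n+1)^2/n$, and thus
\begin{equation*}
\frac{(n+1)^2|\Omega|}{n} \leq \int_\Sigma H_1^F(\partial_{\nu_F}v)^2\bigl(F(\nu)+\omega_0\langle\nu,E_{n+1}^F\rangle\bigr)\,\mathrm{d}\mu_g.
\end{equation*}
Meanwhile, integrating $\mathcal{L}_F v = n+1$ over $\Omega$ against $1$ and using the divergence theorem with the mixed boundary conditions gives $(n+1)|\Omega| = \int_\Sigma \partial_{\nu_F} v\cdot (F(\nu)+\omega_0\langle\nu,E_{n+1}^F\rangle)\,\mathrm{d}\mu_g$, after which the Cauchy-Schwarz inequality on $\Sigma$, applied to the pair $\sqrt{H_1^F}\,\partial_{\nu_F} v$ and $(F(\nu)+\omega_0\langle\nu,E_{n+1}^F\rangle)/\sqrt{H_1^F}$, combines the two displays into the desired estimate.

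Equality forces simultaneous saturation of the trace inequality (so $\nabla_F^2 v = \lambda\, g$ pointwise in $\Omega$) and of the final Cauchy-Schwarz step, which together make the level sets of $v$ a one-parameter family of parallel-in-the-Wulff-sense hypersurfaces; combined with the boundary conditions and Lemma~\ref{lemma:u=c}, this identifies $\Sigma$ as an $\omega_0$-capillary Wulff shape. The main technical obstacle I anticipate is the correct formulation of the anisotropic Reilly identity with Robin boundary on $T$: the Euclidean Reilly formula must be augmented with the extra commutator and lower-order terms coming from $A_F$, and verifying $B_T\geq 0$ requires a careful decomposition of ambient vectors along $\{\nu,\mu\}$ using $\langle E_{n+1}^F, E_{n+1}\rangle = 1$, where the hypothesis $\omega(x)\leq\omega_0$ along $\partial\Sigma$ is exactly what controls the otherwise indefinite cross-term.
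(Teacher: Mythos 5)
First, a point of context: the paper does not prove this lemma at all --- it is quoted verbatim as Theorem 1.2 of \cite{Jia-Wang-Xia-Zhang2023} and used as a black box. The proof in that reference is of Montiel--Ros type: one sweeps out $\Omega$ by flowing $\Sigma$ inward along $\nu_F+\omega_0 E_{n+1}^F$, shows this map is onto $\Omega$ (this is where the hypothesis $\omega(x)\leq\omega_0$ on $\partial\Sigma$ enters), and applies the area formula together with the AM--GM inequality on the Jacobian $\prod_i(1-t\kappa_i^F)\leq(1-tH_1^F)^n$. No Reilly formula appears. Your opening identity $(n+1)|\Omega|=\int_\Sigma\langle X,\nu\rangle\,\mathrm{d}\mu_g$ is correct and is proved exactly as you say.

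Your proposed Reilly-formula route, however, has a genuine gap at its centerpiece. The operator $\mathcal{L}_F v=\operatorname{tr}(A_F\nabla^2 v)$ is not well defined on $\Omega$: as recalled in Section 2, $A_F=\nabla^{\mathbb{S}}\nabla^{\mathbb{S}}F+Fg_{\mathbb{S}^n}$ is a tensor on $\mathbb{S}^n$, i.e.\ it requires a unit direction as its argument, and at an interior point of $\Omega$ there is no distinguished direction to feed it (on $\Sigma$ one uses $\nu$, but $v$ lives on $\Omega$). The natural substitute, the quasilinear Finsler Laplacian $\operatorname{div}\bigl(F(Dv)DF(Dv)\bigr)$, does not come with a Bochner--Reilly identity whose $\Sigma$-boundary term is $\int_\Sigma H_1^F(\partial_{\nu_F}v)^2\bigl(F(\nu)+\omega_0\langle\nu,E_{n+1}^F\rangle\bigr)\,\mathrm{d}\mu_g$; establishing such an identity, with a sign-controlled density $B_T$ on $T$ and with the corner terms along $\partial\Sigma$ accounted for, is essentially the entire content of the theorem and cannot be left ``schematic.'' Two further unaddressed points: the mixed Dirichlet--Robin problem has limited regularity at the corner $\partial\Sigma$ where the two boundary conditions meet, so the integrations by parts in the Reilly identity need separate justification there; and the trace inequality must be applied in the ambient dimension $n+1$ (so $(\operatorname{tr}A)^2\leq(n+1)|A|^2$, not $n|A|^2$), which changes your constants. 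As written, the argument is a plausible program rather than a proof; the Montiel--Ros argument of the cited reference avoids all of these difficulties and is the reason that route is the standard one in the anisotropic setting.
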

We also have the following algebraic lemma which is an extension of \cite{Li-Peng}*{Lemma 3.2}.
\begin{lemma}\label{Lem:aH>bH}
	For any vector $\kappa^F=(\kappa^F_1,\cdots,\kappa^F_n)\in \Gamma_r$, we still denote $H_k^F=\frac{\sigma_k(\kappa^F)}{\binom{n}{k}}\ (k=1,\cdots,n)$ and $H^F_0=1$. If there exist nonnegative and not all vanishing numbers $\{a_j\}_{j=l}^{r}$, nonnegative and not all vanishing numbers $\{b_i\}_{i=0}^{l-1}$, $1\leq l\leq r\leq n$, such that
	\begin{align}\label{equ:lem4.2}
	\sum_{j=l}^{r}a_jH^F_j=\sum_{i=0}^{l-1}b_iH^F_i,
	\end{align}
	then
		\begin{align}\label{equ:lem-aH>bH}
		\sum_{j=l}^{r}a_jH^F_{j-1}
		\geq
		\frac{b_0}{H^F_1}
		+
		\sum_{i=1}^{l-1}b_iH^F_{i-1},
	\end{align}
	and equality holds if and only if $\kappa^F_1=\cdots=\kappa^F_n$.
\end{lemma}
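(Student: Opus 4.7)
The plan is to derive \eqref{equ:lem-aH>bH} from \eqref{equ:lem4.2} by multiplying both sides of the hypothesis by the positive ratio $H_{l-1}^F/H_l^F$ and then invoking the generalized Newton--Maclaurin inequality from Proposition \ref{Prop:Newton-Maclaurin} termwise, in opposite directions on the two sides. The ratio is positive because $\kappa^F\in\Gamma_r$ forces $H_k^F>0$ for every $0\leq k\leq r$, and since $1\leq l\leq r$ this covers both $H_{l-1}^F$ and $H_l^F$.

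For the left-hand side of \eqref{equ:lem4.2}, Newton--Maclaurin with indices $(j,j-1)$ versus $(l,l-1)$ (permissible whenever $j\geq l$) gives $H_j^F/H_{j-1}^F\leq H_l^F/H_{l-1}^F$, equivalently $a_j H_j^F (H_{l-1}^F/H_l^F)\leq a_j H_{j-1}^F$; summing over $l\leq j\leq r$ yields $(H_{l-1}^F/H_l^F)\sum_{j=l}^{r} a_j H_j^F\leq \sum_{j=l}^{r} a_j H_{j-1}^F$. For the right-hand side of \eqref{equ:lem4.2}, the same inequality applied with indices $(l,l-1)$ versus $(i,i-1)$ for $1\leq i\leq l-1$ reverses into $b_i H_i^F (H_{l-1}^F/H_l^F)\geq b_i H_{i-1}^F$, while for the $i=0$ term the specialization with indices $(l,l-1)$ versus $(1,0)$ reads $H_l^F/H_{l-1}^F\leq H_1^F$, hence $b_0 H_{l-1}^F/H_l^F\geq b_0/H_1^F$. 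Chaining the three estimates together with the hypothesis \eqref{equ:lem4.2} produces
\begin{align*}
\sum_{j=l}^{r} a_j H_{j-1}^F
\;\geq\; \frac{H_{l-1}^F}{H_l^F}\sum_{j=l}^{r} a_j H_j^F
\;=\; \frac{H_{l-1}^F}{H_l^F}\sum_{i=0}^{l-1} b_i H_i^F
\;\geq\; \frac{b_0}{H_1^F}+\sum_{i=1}^{l-1} b_i H_{i-1}^F,
\end{align*}
which is exactly \eqref{equ:lem-aH>bH}.

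For the equality case, equality in the above chain forces equality in each active termwise Newton--Maclaurin bound: for each $j>l$ with $a_j>0$ one needs $H_j^F/H_{j-1}^F=H_l^F/H_{l-1}^F$, for each $1\leq i\leq l-1$ with $b_i>0$ one needs $H_i^F/H_{i-1}^F=H_l^F/H_{l-1}^F$, and if $b_0>0$ then $H_1^F=H_l^F/H_{l-1}^F$. Since $\{a_j\}$ and $\{b_i\}$ are assumed not all vanishing, at least one of these identities is a genuine (non-trivial) instance of the equality clause of Proposition \ref{Prop:Newton-Maclaurin} (either through a nonzero $b_0$, or through a nonzero $b_i$ with $1\leq i<l$, or, failing both, through a nonzero $a_j$ with $j>l$), and that clause immediately forces $\kappa_1^F=\cdots=\kappa_n^F$. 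Conversely, in the common-curvature case every $H_k^F$ is a power of the single eigenvalue and each step of the chain is an identity, so \eqref{equ:lem-aH>bH} becomes equality. The only subtle point I expect is this bookkeeping of the equality case, verifying that the non-vanishing hypothesis always activates at least one strict Newton--Maclaurin estimate; this is immediate from the stated assumptions.
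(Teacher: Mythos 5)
Your argument is correct in substance and is essentially the paper's proof: both rest entirely on the Newton--Maclaurin inequality $H^F_i H^F_{j-1}\geq H^F_{i-1}H^F_j$; you merely organize it by pivoting every term through the ratio $H^F_{l-1}/H^F_l$, whereas the paper cross-multiplies the two sums ($\sum_i b_iH^F_i\cdot\sum_j a_jH^F_{j-1}\geq\sum_i b_iH^F_{i-1}\cdot\sum_j a_jH^F_j$) and then substitutes the hypothesis. The one point to tighten is your equality-case bookkeeping: the claim that a nonzero $b_0$ always produces a \emph{non-trivial} instance of the Newton--Maclaurin equality clause fails when $l=1$, since then the relevant comparison is $(1,0)$ against $(l,l-1)=(1,0)$, which is vacuous; and when $l=1$ the only available $b$ is $b_0$, so your ``failing both'' fallback never engages unless some $a_j$ with $j>1$ is nonzero. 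In the genuinely degenerate situation $l=r=1$ (or $l=1$ with only $a_1\neq 0$), the hypothesis reads $a_1H^F_1=b_0$ and the conclusion $a_1\geq b_0/H^F_1$ is automatically an equality for \emph{any} $\kappa^F\in\Gamma_1$, so the ``only if'' direction of the lemma is simply false there. To be fair, the paper's own proof has exactly the same blind spot (its inequality \eqref{equ:pfLem-aH>bH-4} is also trivially an equality in that case), and the defect is harmless for the applications (in Theorem \ref{Thm-Alexan-a=b-w0}(2) the rigidity in that regime comes from equality in the Heintze--Karcher inequality, not from the lemma); but since you explicitly advertise the equality bookkeeping as the subtle point, you should either exclude the case $l=1$ with weight only on $a_1$ and $b_0$, or note that rigidity there must come from elsewhere.
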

\begin{proof}
	Since $\lambda\in\Gamma_r\subset\Gamma_{r-1}\subset\cdots\subset\Gamma_1$ and $\{a_j\}_{j=l}^r$ are not all vanishing, we have $$\sum_{j=l}^{r}a_j H^F_{j}>0.$$
	By Newton-Maclaurin inequality (Proposition \ref{Prop:Newton-Maclaurin}), we know
	\begin{align}\label{equ:pfLem-aH>bH-1}
		H^F_i
		H^F_{j-1}
			\geq
	H^F_{i-1}
		H^F_{j}, \quad 1\leq i<j\leq r.
	\end{align}
	Multiplying \eqref{equ:pfLem-aH>bH-1} by $a_j$ and $b_i$ and summing over $i\in \{1,\cdots,l-1\}$ and $j\in \{l,\cdots,r\}$, combining with the assumption \eqref{equ:lem4.2}, we have
	\begin{align}
		&\sum_{i=1}^{l-1}b_iH^F_i\cdot
		\sum_{j=l}^{r}a_jH^F_{j-1}
		\geq
		\sum_{i=1}^{l-1}b_i H^F_{i-1}\cdot
		\sum_{j=l}^{r}a_jH^F_{j}
	\label{equ:pfLem-aH>bH-3}
		\\
		\Leftrightarrow&
		\left(
		\sum_{j=l}^{r}a_jH^F_j
		-b_0
		\right)\cdot
		\sum_{j=l}^{r}a_jH^F_{j-1}
		\geq
		\sum_{i=1}^{l-1}b_iH^F_{i-1}\cdot
		\sum_{j=l}^{r}a_jH^F_{j}
		\nonumber
		\\
		\Leftrightarrow&
		\left(
		\sum_{j=l}^{r}a_jH^F_{j-1}
		-
		\sum_{i=1}^{l-1}b_iH^F_{i-1}
		\right)
		\sum_{j=l}^{r}a_jH^F_j
		\geq b_0\sum_{j=l}^{r}a_jH^F_{j-1}. \label{equ:pfLem-aH>bH-2}
	\end{align}
	
	If $b_0=0$, then \eqref{equ:pfLem-aH>bH-2} implies \eqref{equ:lem-aH>bH}. Obviously, the equality holds in \eqref{equ:lem-aH>bH} if and only if the equality holds in \eqref{equ:pfLem-aH>bH-3}, that is
	 \begin{align*}
	 	\sum_{i=1}^{l-1}\sum_{j=l}^{r}b_ia_j
	 	\left(
	 	H^F_iH^F_{j-1}
	 	-
	 	H^F_{i-1}H^F_{j}\right)=0.
	 \end{align*}
	 Since $\{a_j\}_{j=l}^r$ are not all vanishing, and $\{b_i\}_{j=1}^{l-1}$ are also not all vanishing, we know there exist $\ 1\leq i_0\leq l-1<l\leq j_0\leq r$, such that the equality holds in \eqref{equ:pfLem-aH>bH-1} for $i=i_0,\ j=j_0$. Then $\kappa^F_1=\cdots=\kappa^F_n$.
	
	 If $b_0\neq 0$. Using Newton-Maclaurin inequality (Proposition \ref{Prop:Newton-Maclaurin}) again, we obtain
	 \begin{align}
	 	\label{equ:pfLem-aH>bH-4}
	 	\sum_{j=l}^{r}a_jH^F_{j-1}
	 	\geq
	 	\sum_{j=l}^{r}a_jH^F_{j} \cdot
	 	\frac{1}{H^F_1}.
	 \end{align}
	From \eqref{equ:pfLem-aH>bH-4}  and \eqref{equ:pfLem-aH>bH-2}, we have    \eqref{equ:lem-aH>bH}. And the equality holds in \eqref{equ:lem-aH>bH} if and only if the equality holds in \eqref{equ:pfLem-aH>bH-4}, that is $\kappa^F_1=\cdots=\kappa^F_n$.
\end{proof}

\begin{proof}[\textbf{Proof of Theorem \ref{Thm-Alexan-a=b-w0}}]
	(1) Multiplying equation \eqref{equ:thm-aH=b1H-w0} with $\<X,\nu\>$, integrating over $\Sigma     $, and using Corollary \ref{cor:minkowski neq} and  Lemma \ref{Lem:aH>bH}, we have
	\begin{align*}
		0&=\int_{\Sigma}
		\left(
		\sum_{j=l}^{r}a_j(\bar{ u})H_j^F
		-
		\sum_{i=1}^{l-1}b_i(\bar{ u})H_i^F
		\right)\<X,\nu\>
		\mathrm{~d}\mu_g
		\\
		&\geq
		\int_{\Sigma}
		\left(
		\sum_{j=l}^{r}a_j(\bar{ u})H_{j-1}^F
		-
		\sum_{i=1}^{l-1}b_i(\bar{ u})H_{i-1}^F
		\right)
		\left(F(\nu)+\omega_0 \<\nu,E_{n+1}^F\>\right)
		\mathrm{~d}\mu_g
		\\
		&\overset{\eqref{equ:lem-aH>bH}}{\geq}0.
	\end{align*}
	Then the equality holds in \eqref{equ:H-minkowski-w0},  \eqref{equ:H-minkowski2-w0} and \eqref{equ:lem-aH>bH}, that is $\Sigma$ is an  $\omega_0$-capillary Wulff shape.
	
	(2) 	Multiplying equation \eqref{equ:thm-aH=b0H-w0} with $\<X,\nu\>$, integrating over $\Sigma$, and using Corollary \ref{cor:minkowski neq}, Lemma \ref{Lem:aH>bH} and \ref{lem:H^{-1}-int-fornula-w0}, we have
	\begin{align*}
		0
		&=\int_{\Sigma}
		\left(
		\sum_{j=l}^{r}a_j(\bar{ u})H_j^F
		-
		\sum_{i=1}^{l-1}b_i(\bar{ u})H_i^F
		\right)\<X,\nu\>
		\mathrm{~d}\mu_g
		-\int_{\Sigma}
		b_0\<X,\nu\>
		\mathrm{~d}\mu_g
		\\
		&\geq
		\int_{\Sigma}
		\left(
		\sum_{j=l}^{r}a_j(\bar{ u})H_{j-1}^F
		-
		\sum_{i=1}^{l-1}b_i(\bar{ u})H_{i-1}^F
			\right)
		\left(F(\nu)+\omega_0 \<\nu,E_{n+1}^F\>\right)
		\mathrm{~d}\mu_g
		-\int_{\Sigma}
		b_0\<X,\nu\>
		\mathrm{~d}\mu_g
		\\
		&\overset{\eqref{equ:lem-aH>bH}}{\geq}
		\int_{\Sigma}
		b_0\frac{F(\nu)+\omega_0 \<\nu,E_{n+1}^F\>}{H_1^F}
		\mathrm{~d}\mu_g
		-\int_{\Sigma}
		b_0\<X,\nu\>
		\mathrm{~d}\mu_g
		\\
		&\overset{\eqref{equ:H_k-wo}}{\geq} 0.
	\end{align*}
	Then the equality holds in \eqref{equ:H-minkowski-w0},  \eqref{equ:H-minkowski2-w0}, \eqref{equ:H_k-wo} and \eqref{equ:lem-aH>bH}, therefore $\Sigma$ is an  $\omega_0$-capillary Wulff shape.
\end{proof}

\begin{proof}[\textbf{Proof of Corollary \ref{cor:isotropic-Minkowski}}] Suppose the  strictly convex  $\theta$-capillary  hypersurfaces  $\Sigma_0,\Sigma\subset \overline{\mathbb{R}^{n+1}_+}$ (with support functions $u_0(z),u(z)\in C^{\infty}(\mathbb{S}^n_{\theta})$ and principal curvature radii $\lambda_0,\lambda$) satisfying
	the equation \eqref{equ:Minkowski-Euic}, which means
	\begin{align}
		\label{equ-pf-Minkow-1}
		u^{1-p}\sigma_n(\lambda)=\varphi=u_0^{1-p}\sigma_n(\lambda_0), \quad\text{on}\ \mathbb{S}_{\theta}^n,
	\end{align}
	for $p\geq 1$.
	
	We first prove  that,  there exists a  constant number $r_0\in\mathbb{R}_{+}$
	such that $\Sigma=r_0\Sigma_0$.
	
	Take a Wulff shape $\W$ (with respect to $F$) which satisfies $\W\cap\overline{\mathbb{R}^{n+1}_+}=\Sigma_0$. Then $\Sigma_0$ is an anisotropic $\omega_0$-capillary hypersurface with $\omega_0=0$ with respect to Wulff shape $\W$, since the anisotropic Gauss map of $\W$ is equal to the position vector of $\W$, and  for position vector $X_0\in\partial\Sigma_0\subset\partial\overline{\mathbb{R}^{n+1}_+}$ we have $\<X_0,E_{n+1}\>=0$.
	
	Since  $\Sigma_0$ and $\Sigma$ are  $\theta$-capillary hypersurfaces, the Gauss maps $\nu_0:\Sigma_0\to\mathbb{S}_{\theta}^n$ and $\nu:\Sigma\to\mathbb{S}_{\theta}^n$ are diffeomorphisms (see \cite{Xia-arxiv}*{Lemma 2.2}), which implies that, for any $X\in\Sigma$, there exists a  $X_0\in\Sigma_0$ satisfies $\nu_0(X_0)=\nu(X)$. Thus, we have $$\<\Phi(\nu(X)),E_{n+1}\>=\<\Phi(\nu_0(X_0)),E_{n+1}\>=0,$$
	since $\Sigma_0$ is an anisotropic $\omega_0$-capillary hypersurface with $\omega_0=0$. This means that $\Sigma$ is also an anisotropic $\omega_0$-capillary hypersurface with $\omega_0=0$ with respect to Wulff shape $\W$.
	
	On the other hands, we can see that $F$ is the support function of $\W$, then $u_0(z)=F(z)$. Combining with $\omega_0=0$ and \eqref{equ:u}, we have that the anisotropic capillary support function of $X\in\Sigma$ is
	\begin{align}
		\label{equ-pf-Minkow-2}
		\bar{u}(X)=\bar{u}(\nu^{-1}(z))=\frac{u(z)}{u_0(z)},\quad z=\nu(X)\in\mathbb{S}_{\theta}^n.
	\end{align}
	Since \eqref{equ:S_F} and the fact that the eigenvalues of $A_F$ are the principal curvature radii of $\W$ (see \cite{Xia2017}), the anisotropic Gauss-Kronecker curvature $H_n^F$ of $\Sigma$ is
	\begin{align}
		\label{equ-pf-Minkow-3}
		H_n^F=\frac{1}{\sigma_n(\lambda^F)}=\frac{\sigma_n(\lambda_0)}{\sigma_n(\lambda)}.
	\end{align}
	Putting \eqref{equ-pf-Minkow-2} and \eqref{equ-pf-Minkow-3} into \eqref{equ-pf-Minkow-1}, it follows that the $\omega_0$-capillary hypersurface $\Sigma$ satisfies
	\begin{align*}
		\sigma_n(\lambda^F)=f(\bar{u}),
	\end{align*}
	where $f(\bar{u})(:=\bar{u}^{p-1})$, is a positive function (for $\bar{u}>0$) satisfying $f'\geq0$ when $p\geq 1$. By Corollary \ref{cor:Christoffel-w0}, $\Sigma$ is an $\omega_0$-capillary Wulff shape, that means there exist a constant number  $r_0\in\mathbb{R}_+$ and  a constant vector $E$ with $\<E,E_{n+1}\>=1$ satisfying $\Sigma=\W_{r_0,\omega_0}(E)=\{x\in\overline{\mathbb{R}^{n+1}_+}:F^0(x-r_0\omega_0E)=r_0\}$.
	Since $\omega_0=0$ and
	$\Sigma_0=\{x\in\overline{\mathbb{R}^{n+1}_+}:F^0(x)=1\}$, we have $$\Sigma=r_0\Sigma_0.$$
	
	We can check that $u=r_0u_0,\lambda=r_0\lambda_0$, putting them into \eqref{equ-pf-Minkow-1} we have
	\begin{align}
		\varphi=u^{1-p}\sigma_n(\lambda)=r_0^{n+1-p}\varphi, \quad\text{on}\ \mathbb{S}_{\theta}^n.
	\end{align}
	If $p\neq n+1$, we have $r_0=1$, since $\varphi>0$.
	
	Then we complete the proof.
\end{proof}

\section{Proof of Alexandrov-type theorems \ref{thm1.9}$\sim$\ref{thm1.10}} \label{sec 5}

This section proves the anisotropic Alexandrov-type theorems \ref{thm1.9}$\sim$\ref{thm1.10} for anisotropic capillary hypersurfaces in the half-space.

\begin{proof}[\textbf{Proof of Theorem \ref{thm1.9}}]
	By dividing \eqref{equ:3.4} by $\eta(\bar{ u}),$ it suffices to prove the result in the case where
	\begin{align}\label{equ:3.5}
		\sum_{j=1}^k\left(b_j(\bar{ u}) H^F_j+c_j(\bar{ u}) H^F_1 H^F_{j-1}\right)=1.
	\end{align}
	Assume $j=1,\cdots, k$. It follows from  \eqref{equ:H-minkowski-w0} that
	\begin{align}\label{equ:kkk3.6}
		\int_{\Sigma} b_j(\bar{u}) H_{j-1}^F\left(F(\nu)+\omega_0\left\langle \nu, E_{n+1}^F\right\rangle\right)
		\mathrm{~d}\mu_g
		\leq
		\int_{\Sigma} b_{j}(\bar{ u})H_{j}^F\langle X, \nu\rangle \mathrm{~d}\mu_g .
	\end{align}
	Similarly, by \eqref{equ:H-minkowski-w0} and Newton-Maclaurin inequality \eqref{equ:N-Mneq}, we have
	\begin{align}\label{equ:kkk3.7}
		&\int_{\Sigma} c_j(\bar{u}) H_{j-1}^F\left(F(\nu)+\omega_0\left\langle \nu, E_{n+1}^F\right\rangle\right)
		\mathrm{~d}\mu_g
         \leq
		\int_{\Sigma} c_{j}(\bar{ u})H_{j}^F\langle X, \nu\rangle \mathrm{~d}\mu_g
		\nonumber
		\\
		&\leq
		\int_{\Sigma} c_{j}(\bar{ u})H_{j-1}^FH_1^F\langle X, \nu\rangle \mathrm{~d}\mu_g.
	\end{align}
	Adding \eqref{equ:kkk3.6} and \eqref{equ:kkk3.7} together and then summing over $j$, using \eqref{equ:3.5}, we
	have
	\begin{align}\label{equ:kkk3.8}
		\int_{\Sigma} \sum_{j=1}^{k}
		\left(b_j(\bar{u}) H_{j-1}^F+c_j(\bar{u}) H_{j-1}^F\right)
		\left(F(\nu)+\omega_0\left\langle \nu, E_{n+1}^F\right\rangle\right)
		\mathrm{~d}\mu_g
		\leq
		\int_{\Sigma} \langle X, \nu\rangle \mathrm{~d}\mu_g.
	\end{align}
	Multiplying the Newton-Maclaurin
	inequality $H^F_1 H^F_{j-1} \geq H^F_j$ by
	$b_j (\bar{ u})$ and summing over $j$ gives
	\begin{align*}
		H^F_1\sum_{j=1}^{k}b_j(\bar{ u})H^F_{j-1}
		\geq
		\sum_{j=1}^{k}b_j(\bar{ u})H^F_{j}.
	\end{align*}
Combining this with \eqref{equ:kkk3.8}, we obtain the inequality
\begin{align*}
	\int_{\Sigma} \langle X, \nu\rangle \mathrm{~d}\mu_g
	\geq&
	\int_{\Sigma}
	 \frac{1}{H^F_1}\sum_{j=1}^{k}
	\left(b_j(\bar{u}) H_{j}^F+c_j(\bar{u}) H_1^F H_{j-1}^F\right)
	\left(F(\nu)+\omega_0\left\langle \nu, E_{n+1}^F\right\rangle\right)
	\mathrm{~d}\mu_g
	\\
	=& \int_{\Sigma}
	\frac{1}{H^F_1}
	\left(F(\nu)+\omega_0\left\langle \nu, E_{n+1}^F\right\rangle\right)
	\mathrm{~d}\mu_g.
\end{align*}
However, anisotropic  Heintze-Karcher inequality	is the reverse inequality \eqref{equ:H_k-wo}. These two inequalities imply the equality in Heintze-Karcher inequality. We conclude that $\Sigma$ is an $\omega_0$-capillary Wulff shape.
\end{proof}

\begin{proof}[\textbf{Proof of Theorem \ref{Thm-Alexan-H>c>H-w0}}]
	(1) For $1<s\leq n$ and  $c'(t)\leq 0$, we have $\frac{d}{dt}(c^{s-1}(t))\leq 0$.
	By \eqref{equ:thm-H>c>H-w0} and Newton-Maclaurin inequality \eqref{equ:N-Mneq}, we know
	$$H^F_1\geq (H^F_{s-1})^{\frac{1}{s-1}}\geq c(\bar{ u}), \ c^s(\bar{ u})\geq H^F_s. $$
	Combining with Corollary \ref{cor:minkowski neq}, we obtain
	\begin{align*}
		&\int_{\Sigma}
		H_{s-1}^F \left(F(\nu)+\omega_0 \<\nu,E_{n+1}^F\>\right)
		\mathrm{~d}\mu_g
		\geq \int_{\Sigma}
		c^{s-1}(\bar{ u})\left(F(\nu)+\omega_0 \<\nu,E_{n+1}^F\>\right)
		\mathrm{~d}\mu_g
		\\
		\overset{\eqref{equ:H-minkowski2-w0}}{\geq}&
		\int_{\Sigma}
		c^{s-1}(\bar{ u})H^F_{1}\<X,\nu\>
		~d\mu_g
		\geq
		\int_{\Sigma}
		c^s(\bar{ u})\<X,\nu\>
		\mathrm{~d}\mu_g
		\geq
		\int_{\Sigma}
		H^F_{s}\<X,\nu\>
		\mathrm{~d}\mu_g
		\\
		=&
		\int_{\Sigma}
		H^F_{s-1}
		\left(F(\nu)+\omega_0 \<\nu,E_{n+1}^F\>\right)
		~d\mu_g.
	\end{align*}
	Thus
	\begin{align*}
		\int_{\Sigma}
		\left(H^F_{s-1}
		-
		c^{s-1}(\bar{ u})\right)
		\left(F(\nu)+\omega_0 \<\nu,E_{n+1}^F\>\right)
		\mathrm{~d}\mu_g
		= 0,
\end{align*}
\begin{align*}
\int_{\Sigma}
		\left(c^{s}(\bar{ u})
		-H^F_{s}\right)\<X, \nu\>
		\mathrm{~d}\mu_g
		= 0,
\end{align*}
and
\begin{align*}
		\int_{\Sigma}
		\left(c^{s-1}(\bar{ u})H^F_{1}
		-
		c^{s}(\bar{ u})\right)\<X,\nu\>
		\mathrm{~d}\mu_g
		= 0.
	\end{align*}
	Combining with $H^F_{s-1}
	-
	c^{s-1}(\bar{ u})\geq 0$, $c^s(\bar{ u})\geq H^F_s$ and $c^{s-1}(\bar{ u})(H^F_{1}-c(\bar{ u}))\<X,\nu\>\geq 0$, we obtain $(H^F_{s-1})^{\frac1{s-1}}=c(\bar{ u})= (H^F_{s})^{\frac1{s}}$ and $c^{s-1}(\bar{ u})(H^F_{1}-c(\bar{ u}))\<X,\nu\>= 0$, which implies $H^F_1=(H^F_{s-1})^{\frac{1}{s-1}}=(H^F_{s})^{\frac1{s}}=c(\bar{ u})$. It follows by Proposition \ref{Prop:Newton-Maclaurin} that, $\Sigma$ is an  $\omega_0$-capillary Wulff shape.
	
	(2) For $c'(\bar{ u})\geq 0$. By \eqref{equ:thm-H/H>c>H/H-w0} and Corollary \ref{cor:minkowski neq}, we have
	\begin{align*}
		\int_{\Sigma}
		H_{s-2}^F
		\left(F(\nu)+\omega_0 \<\nu,E_{n+1}^F\>\right)
		\mathrm{~d}\mu_g
		=\int_{\Sigma}
		H^F_{s-1}\<X,\nu\>
		\mathrm{~d}\mu_g
		\geq
		\int_{\Sigma}
		c(\bar{ u})H^F_{s}\<X,\nu\>
		\mathrm{~d}\mu_g
		\\
		\overset{\eqref{equ:H-minkowski-w0}}{\geq}
		\int_{\Sigma}
		c(\bar{ u})H^F_{s-1}
		\left(F(\nu)+\omega_0 \<\nu,E_{n+1}^F\>\right)
		\mathrm{~d}\mu_g
		\geq
		\int_{\Sigma}
		H^F_{s-2}\left(F(\nu)+\omega_0 \<\nu,E_{n+1}^F\>\right)
		\mathrm{~d}\mu_g,
	\end{align*}
	then
	\begin{align*}
		\int_{\Sigma}
		H^F_{s-1}\<X,\nu\>
		-
		c(\bar{ u})H^F_{s}\<X,\nu\>
		~d\mu_g
		= 0,
	\end{align*}
	combining with $(H^F_{s-1}
	-
	c(\bar{ u})H^F_{s})\<X,\nu\>\geq 0$, we obtain $(H^F_{s-1}
	-
	c(\bar{ u})H^F_{s})\<X,\nu\>=0$. That is $H^F_{s-1}
	=
	c(\bar{ u})H^F_{s}$, $s>1$. By Theorem \ref{Thm-Alexan-a=b-w0}(1), we know $\Sigma$ is an  $\omega_0$-capillary Wulff shape.
\end{proof}


 \begin{proof}[\textbf{Proof of Theorem \ref{thm1.10}}]
The anisotropic $k$-convexity assumption says $H^F_j>$ 0 for all $j =0, \cdots, k$. It follows from  \eqref{equ:kkk4.2}, \eqref{equ:u}  and Proposition \ref{prop2.1} that $\<X,\nu\>>0$ for $X\in\Sigma$.

Assume first that $k \geq 2$. By Newton-Maclaurin inequality \eqref{equ:N-Mneq}, we have for $0 \leq i<j \leq k$,
\begin{align}\label{equ:kkk4.4}
	\frac{1}{H^F_1}\leq
\left(\frac{H^F_i}{H^F_j}\right)^{\frac{1}{j-i}}
\leq \frac{H^F_{j-1}}{H^F_j} .
\end{align}
Therefore, by Newton-Maclaurin inequality \eqref{equ:N-Mneq} again,
\begin{align}\label{equ:kkk4.5}
	\beta\bar{ u}=\sum_{i<j} a_{i, j}\left(\frac{H^F_i}{H^F_j}\right)^{\frac{1}{j-i}} \leq \sum_{i<j} a_{i, j} \frac{H^F_{j-1}}{H^F_j} \leq \sum_{i<j} a_{i, j} \frac{H^F_{k-1}}{H^F_k}=\frac{H^F_{k-1}}{H^F_k},
\end{align}
and
\begin{align}\label{equ:kkk4.6}
	\beta\bar{ u}=\sum_{i<j} a_{i, j}\left(\frac{H^F_i}{H^F_j}\right)^{\frac{1}{j-i}} \geq \sum_{i<j} a_{i, j} \frac{1}{H^F_1}=\frac{1}{H^F_1} .
\end{align}
The inequality \eqref{equ:kkk4.5} implies
$$
\beta  \int_{\Sigma} H^F_k \<X,\nu\> \mathrm{~d}\mu_g
\leq \int_{\Sigma} H^F_{k-1}\left(F(\nu)+\omega_0\left\langle \nu, E_{n+1}^F\right\rangle\right)\mathrm{~d}\mu_g,
$$
which in turn implies $\beta \leq 1$ by the anisotropic Hsiung-Minkowski formula \eqref{equ:Thm1.2}.
On the other hand, \eqref{equ:kkk4.6} implies
$$
\beta  \int_{\Sigma} H^F_1 \<X,\nu\> \mathrm{~d}\mu_g
\geq \int_{\Sigma} H^F_{0}\left(F(\nu)+\omega_0\left\langle \nu, E_{n+1}^F\right\rangle\right)\mathrm{~d}\mu_g,
$$
and hence $\beta \geq 1$ again by the anisotropic Hsiung-Minkowski formula \eqref{equ:Thm1.2}.

We conclude that $\beta=1$ and all the inequalities in \eqref{equ:kkk4.4}  are equalities. Therefore $\Sigma$ is  anisotropic umbilical and so is an $\omega_0$-capillary Wulff shape.

When $k=1$, \eqref{equ:kkk4.6}  becomes an equality and hence $\beta=1$ by \eqref{equ:Thm1.2}. By the Newton-Maclaurin inequality, we have
\begin{align}\label{equ:kkk4.7}
	H^F_2 \bar{ u}=\frac{H^F_2}{H^F_1} \leq \frac{H^F_1}{H^F_0}=H^F_1.
\end{align}
Multiplying this inequality with $F(\nu)+\omega_0\left\langle \nu, E_{n+1}^F\right\rangle$,
integrating and comparing with the anisotropic Hsiung-Minkowski formula \eqref{equ:Thm1.2} for $k=1$,
we again deduce that \eqref{equ:kkk4.7} is an equality,  hence $\Sigma$ is an $\omega_0$-capillary Wulff shape.
 \end{proof}

\section{Reference}
\begin{biblist}

\bib{Alexandrov56}{article}{
	AUTHOR = {Alexandroff, A.},
TITLE = {Uniqueness theorems for surfaces in the large},
JOURNAL = {Vestnik Leningrad. Univ.},
VOLUME = {11},
YEAR = {1956},
PAGES = {5–17},
}

\bib{Chen-Guan-Li-Scheuer2022}{article} {
	author={Chen, C.},
	author={Guan, P.},
	author={Li, J.},
	author={Scheuer, J.},
	TITLE = {A fully-nonlinear flow and quermassintegral inequalities in
		the sphere},
	JOURNAL = {Pure Appl. Math. Q.},
	FJOURNAL = {Pure and Applied Mathematics Quarterly},
	VOLUME = {18},
	YEAR = {2022},
	NUMBER = {2},
	PAGES = {437--461},
	ISSN = {1558-8599},
	MRCLASS = {53E10 (35J60 53C42)},
	MRNUMBER = {4429215},
	MRREVIEWER = {Yong Huang},
}

\bib{Philippis2015}{article}{
	AUTHOR = {De Philippis, G.},
	AUTHOR = {Maggi, F.},
	TITLE = {Regularity of free boundaries in anisotropic capillarity
		problems and the validity of {Y}oung's law},
	JOURNAL = {Arch. Ration. Mech. Anal.},
	FJOURNAL = {Archive for Rational Mechanics and Analysis},
	VOLUME = {216},
	YEAR = {2015},
	NUMBER = {2},
	PAGES = {473--568},
	ISSN = {0003-9527},
	MRCLASS = {35R30 (28A75 35B65)},
	MRNUMBER = {3317808},
	MRREVIEWER = {Antoine Henrot},
	DOI = {10.1007/s00205-014-0813-2},
	URL = {https://doi.org/10.1007/s00205-014-0813-2},
}
\bib{FH59}{article}{
	AUTHOR = {Feeman, G.},
	AUTHOR = {Hsiung, C.-C.},
	TITLE = {Characterizations of {R}iemann {$n$}-spheres},
	JOURNAL = {Amer. J. Math.},
	FJOURNAL = {American Journal of Mathematics},
	VOLUME = {81},
	YEAR = {1959},
	PAGES = {691--708},
	ISSN = {0002-9327},
	MRCLASS = {53.00},
	MRNUMBER = {107265},
	MRREVIEWER = {L. A. Santal\'{o}},
	DOI = {10.2307/2372922},
	URL = {https://doi.org/10.2307/2372922},
}
\bib{Finn1986}{book}{
	AUTHOR = {Finn, R.},
	TITLE = {Equilibrium capillary surfaces},
	SERIES = {Grundlehren der mathematischen Wissenschaften [Fundamental
		Principles of Mathematical Sciences]},
	VOLUME = {284},
	PUBLISHER = {Springer-Verlag, New York},
	YEAR = {1986},
	PAGES = {xvi+245},
	ISBN = {0-387-96174-7},
	MRCLASS = {49-02 (49F10 53-02 53A10 58E12)},
	MRNUMBER = {816345},
	MRREVIEWER = {Helmut Kaul},
	DOI = {10.1007/978-1-4613-8584-4},
	URL = {https://doi.org/10.1007/978-1-4613-8584-4},
}

\bib{Gao-Li2024}{article}{
	author={Gao, J.},
	author={Li, G.},
	title={Anisotropic Alexandrov--Fenchel Type Inequalities and
		Hsiung--Minkowski Formula},
	journal={J. Geom. Anal.},
	volume={34},
	date={2024},
	number={10},
	pages={Paper No. 312, 31pp},
	issn={1050-6926},
	review={\MR{4784920}},
	doi={10.1007/s12220-024-01759-7},
}

\bib{He-Li06}{article}{
AUTHOR = {He, Y.},
author={Li, H.},
title={Integral formula of Minkowski type and new characterization of the Wulff shape},
JOURNAL={Acta Math. Sin. (Engl. Ser.) },
VOLUME = {24},
YEAR = {2008},
NUMBER = {4},
PAGES = {697--704},
}

\bib{HL08}{article} {
	AUTHOR = {He, Y.},
	author={Li, H.},
	TITLE = {Stability of hypersurfaces with constant {$(r+1)$}-th
		anisotropic mean curvature},
	JOURNAL = {Illinois J. Math.},
	FJOURNAL = {Illinois Journal of Mathematics},
	VOLUME = {52},
	YEAR = {2008},
	NUMBER = {4},
	PAGES = {1301--1314},
	ISSN = {0019-2082},
	MRCLASS = {53C42 (49Q10)},
	MRNUMBER = {2595769},
	MRREVIEWER = {C\'{e}sar Rosales},
	URL = {http://projecteuclid.org/euclid.ijm/1258554364},
}

\bib{Jia-Wang-Xia-Zhang2023}{article}{
AUTHOR = {Jia, X.},
AUTHOR = {Wang, G.},
AUTHOR = {Xia, C.},
AUTHOR = {Zhang, X.},
title={Alexandrov's theorem for anisotropic capillary hypersurfaces in the half-space},
journal={Arch. Ration. Mech. Anal. },
VOLUME = {247},
YEAR = {2023},
NUMBER = {2},
PAGES = {19--25},
}

\bib{sung2000}{article}
{
	AUTHOR = {Koh, S-E.},
	title={Sphere theorem by means of the ratio of mean curvature functions},
	JOURNAL={Glasg. Math. J.},
	VOLUME = {42},
	YEAR = {2000},
	NUMBER = {1},
	PAGES = {91--95},
}

\bib{Koiso2022}{article}{
	AUTHOR = {Koiso, M.},
	TITLE = {Stable anisotropic capillary hypersurfaces in a wedge},
	JOURNAL = {Math. Eng.},
	FJOURNAL = {Mathematics in Engineering},
	VOLUME = {5},
	YEAR = {2023},
	NUMBER = {2},
	PAGES = {Paper No. 029, 22pp},
	MRCLASS = {58E12},
	MRNUMBER = {4431669},
	MRREVIEWER = {Futoshi Takahashi},
	DOI = {10.3934/mine.2023029},
	URL = {https://doi.org/10.3934/mine.2023029},
}

\bib{Koiso2010}{article}
{
AUTHOR = {Koiso, M.},
author={ Palmer, B.},
title={Anisotropic umbilic points and Hopf's theorem for surfaces with constant anisotropic mean curvature},
JOURNAL={Indiana Univ. Math. J.},
VOLUME = {59},
YEAR = {2010},
PAGES = { 79--90},
}

\bib{KKK2016}{article}{
	AUTHOR = {Kwong, K.-K.},
	TITLE = {An extension of {H}siung-{M}inkowski formulas and some
		applications},
	JOURNAL = {J. Geom. Anal.},
	FJOURNAL = {Journal of Geometric Analysis},
	VOLUME = {26},
	YEAR = {2016},
	NUMBER = {1},
	PAGES = {1--23},
	ISSN = {1050-6926},
	MRCLASS = {53C40 (53C50)},
	MRNUMBER = {3441501},
	MRREVIEWER = {Vicent Gimeno},
	DOI = {10.1007/s12220-014-9536-8},
	URL = {https://doi.org/10.1007/s12220-014-9536-8},
}
	\bib{KKK2018}{article}{
	AUTHOR = {Kwong, K.-K.},
	AUTHOR = {Lee, H.},
	AUTHOR = {Pyo, J.},
	TITLE = {Weighted {H}siung-{M}inkowski formulas and rigidity of
		umbilical hypersurfaces},
	JOURNAL = {Math. Res. Lett.},
	FJOURNAL = {Mathematical Research Letters},
	VOLUME = {25},
	YEAR = {2018},
	NUMBER = {2},
	PAGES = {597--616},
	ISSN = {1073-2780},
	MRCLASS = {53C42 (53C24)},
	MRNUMBER = {3826837},
	MRREVIEWER = {Paulo Alexandre Sousa},
	DOI = {10.4310/MRL.2018.v25.n2.a13},
	URL = {https://doi.org/10.4310/MRL.2018.v25.n2.a13},
}

\bib{Li-Peng}{article}{
	AUTHOR = {Li, G.},
	AUTHOR = {Peng, W.},
	TITLE = {Some new characterizations of the {W}ulff shape},
	JOURNAL = {Colloq. Math.},
	FJOURNAL = {Colloquium Mathematicum},
	VOLUME = {171},
	YEAR = {2023},
	NUMBER = {2},
	PAGES = {269--284},
	ISSN = {0010-1354},
	MRCLASS = {53C24 (53C40)},
	MRNUMBER = {4543048},
	MRREVIEWER = {Changwei Xiong},
	DOI = {10.4064/cm8695-3-2022},
	URL = {https://doi.org/10.4064/cm8695-3-2022},
}

\bib{Ma-Xiong2013}{article}
{
	AUTHOR = {Ma, H.},
	author={Xiong, C },
	title={Hypersurfaces with constant anisotropic mean curvatures},
	JOURNAL={J. Math. Sci. Univ. Tokyo},
	VOLUME = {20},
	YEAR = {2013},
	NUMBER = {3},
	PAGES = {335--347},
}

\bib{Mei-Wang-Weng}{article}{
	title={A constrained mean curvature flow and Alexandrov-Fenchel inequalities.},
	DOI={10.1093/imrn/rnad020},
	journal={Int. Math. Res. Not. IMRN},
	publisher={Oxford University Press (OUP)},
	author={Mei, X.},
	author={Wang, G.},
	author={Weng, L.},
	VOLUME = {1},
	year={2024},
	pages={152--174}
	}
	\bib{Mei-Wang-Weng-2025}{article}{
		author={Mei, X.},
		author={Wang, G.},
		author={Weng, L.},
		title={The capillary Minkowski problem},
		journal={Adv. Math.},
		volume={469},
		date={2025},
		pages={Paper No. 110230},
		issn={0001-8708},
		review={\MR{4884106}},
		doi={10.1016/j.aim.2025.110230},
	}
		\bib{Mei-Wang-Weng-Lp-Minkowski}{article}{
		author={Mei, X.},
	author={Wang, G.},
	author={Weng, L.},
		title={The capillary $L_p$-Minkowski problem},
		year={2025},
		eprint={2505.07746},
		archivePrefix={arXiv},
		primaryClass={math.DG},
		url={https://arxiv.org/abs/2505.07746}, 
	}
	
\bib{Xia-arxiv}{article}{
	title={Alexandrov-Fenchel inequalities for convex hypersurfaces in the half-space with capillary boundary. II},
	author={Mei, X.},
author={Wang, G.},
author={Weng, L.},
	author={Xia, C},
	year={2024},
	eprint={2408.13655},
	archivePrefix={arXiv},
	primaryClass={math.MG},
	url={https://arxiv.org/abs/2408.13655},
}

\bib{Onat2010}{article}{
	    AUTHOR = {Onat, L.},
		TITLE = {Some characterizations of the {W}ulff shape},
		JOURNAL = {C. R. Math. Acad. Sci. Paris},
		FJOURNAL = {Comptes Rendus Math\'{e}matique. Acad\'{e}mie des Sciences. Paris},
		VOLUME = {348},
		YEAR = {2010},
		NUMBER = {17-18},
		PAGES = {997--1000},
		ISSN = {1631-073X},
		MRCLASS = {53C42 (53C40)},
		MRNUMBER = {2721788},
		MRREVIEWER = {C\'{e}sar Rosales},
		DOI = {10.1016/j.crma.2010.07.028},
		URL = {https://doi.org/10.1016/j.crma.2010.07.028},
	}

\bib{Stong1960}{article}{
	AUTHOR = {Stong, R. E.},
	TITLE = {Some characterizations of {R}iemann {$n$}-spheres},
	JOURNAL = {Proc. Amer. Math. Soc.},
	FJOURNAL = {Proceedings of the American Mathematical Society},
	VOLUME = {11},
	YEAR = {1960},
	PAGES = {945--951},
	ISSN = {0002-9939},
	MRCLASS = {53.00},
	MRNUMBER = {120586},
	MRREVIEWER = {C.-C. Hsiung},
	DOI = {10.2307/2034440},
	URL = {https://doi.org/10.2307/2034440},
}

\bib{Xia2009-to-appear}{article}{
	AUTHOR ={Wang, G.},
		AUTHOR ={Xia, C.},
		TITLE = {Hsiung-Minkowski's formula for hypersurfaces and their applications. Proceedings of the International Congress of Chinese Mathematicians (Beijing 2019), to appear},
}

\bib{Wang-Weng-Xia}{article}{
	TITLE = {Alexandrov-{F}enchel inequalities for convex hypersurfaces in
		the half-space with capillary boundary},
	JOURNAL = {Math. Ann.},
	FJOURNAL = {Mathematische Annalen},
	VOLUME = {388},
	YEAR = {2024},
	NUMBER = {2},
	PAGES = {2121--2154},
	ISSN = {0025-5831},
	MRCLASS = {53E40 (35K96 53C21 53C24)},
	MRNUMBER = {4700391},
	DOI = {10.1007/s00208-023-02571-4},
	URL = {https://doi.org/10.1007/s00208-023-02571-4},
	author={Wang, G.},
	author={Weng, L.},
	author={Xia, C},
}
\bib{Wei-Xiong2021}{article}{
	AUTHOR = {Wei, Y. },
	AUTHOR = {Xiong, C.},
	TITLE = {A volume-preserving anisotropic mean curvature type flow},
	JOURNAL = {Indiana Univ. Math. J.},
	FJOURNAL = {Indiana University Mathematics Journal},
	VOLUME = {70},
	YEAR = {2021},
	NUMBER = {3},
	PAGES = {881--905},
	ISSN = {0022-2518},
	MRCLASS = {53E10},
	MRNUMBER = {4284100},
	MRREVIEWER = {James Alexander McCoy},
	DOI = {10.1512/iumj.2021.70.8337},
	URL = {https://doi.org/10.1512/iumj.2021.70.8337},
}

\bib{Wei-Xiong2022}{article}{
	AUTHOR = {Wei, Y. },
	AUTHOR = {Xiong, C.},
	TITLE = {A fully nonlinear locally constrained anisotropic curvature
		flow},
	JOURNAL = {Nonlinear Anal.},
	FJOURNAL = {Nonlinear Analysis. Theory, Methods \& Applications. An
		International Multidisciplinary Journal},
	VOLUME = {217},
	YEAR = {2022},
	PAGES = {Paper No. 112760, 29pp},
	ISSN = {0362-546X},
	MRCLASS = {53E10 (53C21)},
	MRNUMBER = {4361845},
	MRREVIEWER = {Xiaolong Li},
	DOI = {10.1016/j.na.2021.112760},
	URL = {https://doi.org/10.1016/j.na.2021.112760},
}

\bib{Wu-Xia2014}{article}{
	AUTHOR = {Wu, J.},
	AUTHOR = {Xia, C.},
	TITLE = {On rigidity of hypersurfaces with constant curvature functions
		in warped product manifolds},
	JOURNAL = {Ann. Global Anal. Geom.},
	FJOURNAL = {Annals of Global Analysis and Geometry},
	VOLUME = {46},
	YEAR = {2014},
	NUMBER = {1},
	PAGES = {1--22},
	ISSN = {0232-704X},
	MRCLASS = {53C24 (52A30 53C40)},
	MRNUMBER = {3205799},
	MRREVIEWER = {Andreas Savas-Halilaj},
	DOI = {10.1007/s10455-013-9405-x},
	URL = {https://doi.org/10.1007/s10455-013-9405-x},
}

\bib{Xia13}{article}{
	author={Xia, C.},
	title={On an anisotropic Minkowski problem},
	journal={Indiana Univ. Math. J.},
	volume={62},
	number={5},
	pages={1399--1430},
	year={2013},	
}

\bib{Xia2017}{article}{
	title={Inverse anisotropic mean curvature flow and a Minkowski type inequality},
	author={Xia, C.},
	JOURNAL = {Adv. Math.},
	FJOURNAL = {Advances in Mathematics},
	volume={315},
	pages={102--129},
	year={2017},
}

\end{biblist}

\end{document}